\newtheorem{theorem}{Theorem}[section]
\newtheorem{proposition}{Proposition}[section]
\newtheorem{remark}{Remark}[section]
\newtheorem{lemma}{Lemma}[section]
\numberwithin{equation}{section}
\def\p{\partial}
\def\d{\mathrm{d}}
\def\vr{\varrho}
\def\no{\nonumber}
\def\R{\mathbb{R}}
\def\eps{\epsilon}
\def\div{\mathrm{div}}
\def\u{\mathrm{u}}
\def\dr{\mathrm{d}}
\def\l{\left\langle}
\def\r{\right\rangle}
\def\A{\mathrm{A}}
\def\N{\mathrm{N}}
\def\g{\mathrm{g}}
\def\B{\mathrm{B}}
\def\J{\mathcal{J}}
\def\P{\mathrm{P}}
\def\Q{\mathrm{Q}}
\newcounter{wronumber}\setcounter{wronumber}{1}
\begin{document}
\title[Parabolic-hyperbolic compressible liquid crystal model]
			{On well-posedness of Ericksen-Leslie's parabolic-hyperbolic liquid crystal model in compressible flow}

\author[N. Jiang]{Ning Jiang}
\address[Ning Jiang]{\newline School of Mathematics and Statistics, Wuhan University, Wuhan, 430072, P. R. China}
\email{njiang@whu.edu.cn}

\author[Y.-L. Luo]{Yi-Long Luo}
\address[Yi-Long Luo]
		{\newline School of Mathematics and Statistics, Wuhan University, Wuhan, 430072, P. R. China}
\email{yl-luo@whu.edu.cn}

\author[S.-T. Tang]{Shaojun Tang$^*$}
\address[Shao-Jun Tang]
		{\newline School of Mathematics and Statistics, Wuhan University, Wuhan, 430072, P. R. China}
\email{shaojun.tang@whu.edu.cn}
\thanks{ $*$Corresponding author}

\maketitle

\begin{abstract}
   We study the Ericksen-Leslie's parabolic-hyperbolic liquid crystal model in compressible flow. Inspired by our study for incompressible case \cite{Jiang-Luo-arXiv-2017} and some techniques from compressible Navier-Stokes equations, we prove the local-in-time existence of the classical solution to the system with finite initial energy, under some constraints on the Leslie coefficients which ensure the basic energy law is dissipative. Furthermore, with an additional assumption on the coefficients which provides a damping effect, and the smallness of the initial energy, the global classical solution can be established.
\end{abstract}

\section{Introduction}

\subsection{Ericksen-Leslie's hyperbolic compressible model}
The hydrodynamic theory of liquid crystals was established by Ericksen \cite{Ericksen-1961-TSR, Ericksen-1962ARMA} and Leslie \cite{Leslie-1966-QJMAM, Leslie-1968-ARMA} during the period of 1958 through 1968, see also the survey paper \cite{Leslie-1979}. In this paper, we consider the general parabolic-hyperbolic system (derived by Leslie in \cite{Leslie-1968-ARMA}) of nematic liquid crystal in compressible flow:
\begin{equation}\label{EL-general}
  \left\{
    \begin{array}{l}
      \dot{\rho} + \rho \div \u = 0 \,, \\
      \rho \dot{\u} = \rho \mathrm{F} + \div \widehat{\sigma} \,, \\
      \rho \dot{\omega} = \rho \mathrm{G} + \widehat{g} + \div \widehat{\pi} \,.
    \end{array}
  \right.
\end{equation}

The above system is consisted of the equations of $(\rho, \u, \dr)$ with variables $ ( x ,t ) \in \R^N \times \R^+ $ $(N = 2 , 3)$. It is a macroscopic description of the time evolution of the material under influence of both the flow velocity field $\u=(\u_1,\cdots, \u_N)^\top$ and the macroscopic description of the microscopic orientation configurations $\dr= (\dr_1,\cdots, \dr_N)^\top$ (with geometric constraint $|\dr|=1$) of rod-like liquid crystals.  It represents the conservations laws of mass, linear momentum and angular momentum respectively. Here, $\rho$ is the fluid density. Furthermore, $\widehat{g}$ is the intrinsic force associated with $\dr$, $\widehat{\pi}$ is the director stress, $\mathrm{F}$ and $\mathrm{G}$ are external body force and external director body force, respectively. We use the notation $\partial_i = \partial_{x_i}$, and $\nabla = ( \partial_1, \cdots, \partial_N )^\top$ and $\div = \sum_{i = 1}^N \partial_i $. The superposed dot denotes the material derivative $\partial_t + \u\cdot\nabla$, and
$$\omega = \dot \dr = \partial_t \dr + \u \cdot \nabla \dr$$
represents the material derivative of $\dr$. Throughout this paper, the same index occurring in a monomial term means taking summation, for instance, $A_i B_i = \sum_{i = 1}^N A_i B_i$.

In the system \eqref{EL-general}, the constitutive relations for $\widehat{\sigma}$, $\widehat{\pi}$ and $\widehat{g}$ are given by:
\begin{equation}\label{Constitutive}
  \begin{aligned}
    \widehat{\sigma}_{ji}& = -p\delta_{ij} - \rho^2 \tfrac{\partial W}{ \partial \rho } \delta_{ij} - \rho \tfrac{\partial W}{\partial ( \partial_j \dr_{k}) } \partial_i \dr_{k} + \sigma_{ji}\,,\\
    \widehat{\pi}_{ji}& = \rho \tfrac{\partial W}{\partial ( \partial_j \dr_{i} ) } + \alpha_0 D_j \dr_i\,,\\
    \widehat{g}_{i} & = \Gamma \dr_i - \partial_j ( \alpha_0 D_j \dr_i ) - \rho \tfrac{\partial W}{\partial \dr_i} + g_i\,.
  \end{aligned}
\end{equation}
Here $p = p(\rho)$ is the pressure, the scalar function $\Gamma$ are Lagrangian multipliers for the constraint $|\dr|=1$ and $ D_j = \dr_j \partial_k \dr_{k} - \dr_k \partial_k \dr_{j} $. The terms involving the coefficient $\alpha_0$ are significant only if the director stress is specified as a boundary condition. $\rho W$ is the Oseen-Frank energy functional for the equilibrium configuration of a unit director field:
\begin{equation}\label{Oseen-Frank-energy}
  \begin{aligned}
    2 \rho W =& k_1 (\div \dr)^2 + k_2 |\dr \cdot (\nabla \times \dr)|^2 + k_3 |\dr \times (\nabla \times \dr)|^2 \\
    & + (k_2 + k_4) \left[ \mathrm{tr} (\nabla \dr)^2 - (\div \dr)^2 \right]\, ,
  \end{aligned}
\end{equation}
where the coefficients $k_1,\ k_2,\ k_3$, and $k_4$ are the measure of viscosity, depending on the material and the temperature.

The kinematic transport $\g$ is given by:
\begin{equation}\label{hat-g}
  g_i = \lambda_1 \N_i + \lambda_2 \dr_j \A_{ij}
\end{equation}
which represents the effect of the macroscopic flow field on the microscopic structure. The following notations
\begin{equation*}
\begin{aligned}
  \A = \tfrac{1}{2}(\nabla \u + \nabla^\top\u)\,,\quad \B= \tfrac{1}{2}(\nabla \u - \nabla^\top\u)\,, \quad \N = \omega + \B \dr\,,
\end{aligned}
\end{equation*}
represent the rate of strain tensor, skew-symmetric part of the strain rate and the rigid rotation part of director changing rate by fluid vorticity, respectively. Here $\A_{ij} = \tfrac{1}{2} (\partial_j \u_i + \partial_i \u_j)$, $ \B_{ij} = \tfrac{1}{2} (\partial_j \u_i - \partial_i \u_j) $, $(\B \dr)_i =\B_{ki} \dr_k$. The material coefficients $\lambda_1$ and $\lambda_2$ reflects the molecular shape and the slippery part between the fluid and the particles. The first term of \eqref{hat-g} represents the rigid rotation of the molecule, while the second term stands for the stretching of the molecule by the flow.

The stress tensor $\sigma$ has the following form:
\begin{equation}\label{Extra-Sress-sigma}
  \begin{aligned}
    \sigma_{ji}= \xi \A_{kk} \delta_{ij} +  \mu_1 \dr_k \A_{kp}\dr_p  \dr_i \dr_j + \mu_2  \dr_j \N_i  + \mu_3 \dr_i \N_j  + \mu_4 \A_{ij} + \mu_5 \A_{ik}\dr_k \dr_j   + \mu_6 \dr_i \A_{jk}\dr_k \,.
  \end{aligned}
\end{equation}
These coefficients $\mu_i (1 \leq i \leq 6)$ and $\xi$ which may depend on material and temperature, are usually called Leslie coefficients, and are related to certain local correlations in the fluid. Usually, the following relations are frequently introduced in the literature.
\begin{equation}\label{Coefficients-Relations}
  \lambda_1=\mu_2-\mu_3\,, \quad\lambda_2 = \mu_5-\mu_6\,,\quad \mu_2+\mu_3 = \mu_6-\mu_5\,.
\end{equation}
The first two relations are necessary conditions in order to satisfy the equation of motion identically, while the third relation is called {\em Parodi's relation}, which is derived from Onsager reciprocal relations expressing the equality of certain relations between flows and forces in thermodynamic systems out of equilibrium. Under Parodi's relation, we see that the dynamics of a compressible nematic liquid crystal flow involve six independent Leslie coefficients in \eqref{Extra-Sress-sigma}.

For simplicity, in this paper, we assume the external forces vanish, that is, $\mathrm{F}=0$, $\mathrm{G}=0$, and the coefficient $\alpha_0 = 0$, which reduces to
 \begin{equation}\label{d-Dissipative-term}
  \partial_j \pi_{ji} = \partial_j \left( \rho \tfrac{\partial W}{\partial(\partial_j \dr_i)} \right) = \partial_j (\partial_j \dr_i) = \Delta \dr_i\,.
\end{equation}
 Moreover, we take $k_1 = k_2 = k_3 = 1$, $k_4 = 0$ in \eqref{Oseen-Frank-energy}, then
$$ 2 \rho W = |\dr \cdot (\nabla \times \dr)|^2 + |\dr \times (\nabla \times \dr)|^2 + \mathrm{tr} (\nabla \dr)^2\, . $$
Since $|\dr| = 1$, this can be further simplified as $ 2W = |\nabla \dr|^2 $, which implies
\begin{equation}\label{Special-OF}
  \rho \tfrac{\partial W}{\partial \dr_i} = 0,\  \rho \tfrac{\partial W}{\partial(\partial_j \dr_i)} = \partial_j \dr_i\, .
\end{equation}
Thus the third evolution equation of \eqref{EL-general} is
\begin{equation}\label{d-equation-Hyper-Parab}
  \rho \ddot{\dr} = \Delta \dr + \Gamma \dr + \lambda_1 (\dot{\dr} + \mathrm{B} \dr) + \lambda_2 \A \dr \, .
\end{equation}
Since $|\dr|=1$, it is derived from multiplying $\dr$ in \eqref{d-equation-Hyper-Parab} that
\begin{equation}\label{Lagrangian}
  \Gamma \equiv \Gamma (\rho, \u, \dr, \dot{\dr}) = - \rho  |\dot{\dr}|^2 + |\nabla \dr|^2 - \lambda_2 \dr^\top \A \dr\, .
\end{equation}

Combining the first equality of \eqref{Constitutive} and \eqref{Special-OF}, one can obtain that
\begin{equation}\no
    \div \widehat{\sigma} = - \nabla p + \div ( \tfrac{1}{2} |\nabla \dr |^2 I -  \nabla \dr \odot \nabla \dr)  + \div \sigma\,,
\end{equation}
where the symbol $ I $ indicates the identity matrix and $(\nabla \dr \odot \nabla \dr)_{ij} = \partial_i \dr_k \partial_j \dr_k\,.$ On the other hand, it can yield that by \eqref{Extra-Sress-sigma}
\begin{equation}
  \no \div \sigma= \div \big( \tfrac{1}{2} \mu_4 ( \nabla \u + \nabla^\top \u ) + \xi \div \u I \big) + \div \tilde{\sigma}\,,
\end{equation}
where
\begin{equation}\label{tilde-sigma}
  \begin{aligned}
   \tilde{\sigma}_{ji} \equiv \big{(}\tilde{\sigma}(\u, \dr, \dot{\dr})\big{)}_{ji}  = & \mu_1 \dr_k \dr_p \A_{kp} \dr_i \dr_j + \mu_2 \dr_j (\dot{\dr}_i + \B_{ki} \dr_k)  \\
   &+ \mu_3 \dr_i (\dot{\dr}_j + \B_{kj} \dr_k)    +  \mu_5 \dr_j \dr_k \A_{ki} + \mu_6 \dr_i \dr_k \A_{kj} \, ,
  \end{aligned}
\end{equation}

As a consequence, the Ericksen-Leslie hyperbolic liquid crystal model for a compressible flow has the form:
\begin{equation}\label{compressible-Liquid-Crystal-Model}
  \begin{aligned}
    \left\{ \begin{array}{c}
      \p_t \rho + \div(\rho \u) = 0\, ,\\[2mm]
      \p_t ( \rho \u ) + \div ( \rho \u \otimes \u ) + \nabla p = \div ( \Sigma_1 + \Sigma_2 + \Sigma_3 ) \, ,\\[2mm]
      \rho \ddot{\dr} = \Delta \dr + \Gamma \dr + \lambda_1 (\dot{\dr} + \B \dr) + \lambda_2 \A \dr\, ,
    \end{array}\right.
  \end{aligned}
\end{equation}
on $ \R^N \times \R^+$ $(N=2 ,  3)$ with the geometric constraint $|\dr|=1$. The symbol $\otimes$ denotes the tensor product for two vectors with entries $ (a \otimes b)_{ij} = a_i b_j$ for $1 \leq i, j \leq 3$. For the simplicity, we assume that the pressure $ p $ obeys the $\gamma$-law, i.e. $ p(\rho) = a \rho^\gamma $ with $ \gamma \geq 1 $, $ a>1 $. And the notation $\Sigma_i$ (i = 1,2,3) are as follows:
\begin{align*}
  \Sigma_1 := & \tfrac{1}{2} \mu_4 ( \nabla \u + \nabla^\top \u ) + \xi \div \u I \, , \\[1.5mm]
  \Sigma_2 := & \tfrac{1}{2} |\nabla \dr|^2 I - \nabla \dr \odot \nabla \dr \, , \\[1.5mm]
  \Sigma_3 := &\tilde{\sigma} \, .
\end{align*}

\subsection{Previous results: incompressible model}
Since Ericksen and Leslie developed their continuum theory of liquid crystals, there have been remarkable research developments in liquid crystals from both theoretical and applied aspects. When the surrounding fluid is incompressible, Leslie (\cite{Leslie-1968-ARMA}) derived the corresponding incompressible version system of \eqref{compressible-Liquid-Crystal-Model} for $(\u,\dr)$:
\begin{equation}\label{PHLC}
  \begin{aligned}
    \left\{ \begin{array}{c}
      \partial_t \u + \u \cdot \nabla \u - \frac{1}{2} \mu_4 \Delta \u + \nabla p = - \div (\nabla \dr \odot \nabla \dr) + \div \tilde{\sigma}\, , \\
      \div \u = 0\, ,\\
     \rho_1 \ddot{\dr} = \Delta \dr + \gamma \dr + \lambda_1 (\dot{\dr} + \B \dr) + \lambda_2 \A \dr\, ,
    \end{array}\right.
  \end{aligned}
\end{equation}
where $\rho_1> 0$ is the inertia constant, and other notations are the same as before.  Let us remark that a particularly important special case of Ericksen-Leslie's model \eqref{PHLC} is that the term $\div \tilde{\sigma}$ vanishes. Namely, the coefficients $\mu_i's$, $(1\leq i \leq 6, i \neq 4)$ of $\div \tilde{\sigma}$ are chosen as 0, which immediately implies $\lambda_1 = \lambda_2 = 0$. Consequently, the system \eqref{PHLC} reduces to a model which is Navier-Stokes equations coupled with a wave map from $\mathbb{R}^N$ to $\mathbb{S}^{N-1}$:
\begin{align}\label{NS-WM}
  \left\{ \begin{array}{c}
    \partial_t \u + \u \cdot \nabla \u + \nabla p = \frac{1}{2}\mu_4 \Delta \u - \div (\nabla \dr \odot \nabla \dr )\, , \\
    \div \u =0\, ,\\
    \rho_1\ddot{\dr} = \Delta \dr + (-\rho_1 |\dot{\dr}|^2+|\nabla \dr|^2) \dr\, .
  \end{array}\right.
\end{align}

\subsubsection{$\rho_1=0, \lambda_1=-1$, incompressible parabolic model}

When the coefficients $\rho_1=0$ and $\lambda_1=-1$ in the third equation of \eqref{PHLC}, the system reduces to the parabolic type equations, which are also called Ericksen-Leslie's system in the literatures. This is the most well-developed case for analytical studies. If the fluid containing nematic liquid crystals is at rest, we have the well-known Oseen-Frank theory for static nematic liquid crystals, whose mathematical study was initialed from Hardt-Kinderlehrer-Lin \cite{Hardt-Kinderlehrer-Lin-CMP1986} on the existence and partial regularity from the point view of calculous of variations.. Since then there have been many works in this direction. In particular, the existence and regularity or partial regularity of the approximation (usually Ginzburg-Landau approximation as in \cite{Lin-Liu-CPAM1995}) dynamical Ericksen-Leslie's system was started by the work of Lin and Liu in \cite{Lin-Liu-CPAM1995}, \cite{Lin-Liu-DCDS1996} and \cite{Lin-Liu-ARMA2000}. For the simplest system preserving the basic energy law
\begin{align}\label{simplified-model}
  \left\{ \begin{array}{c}
    \partial_t \u + \u \cdot \nabla \u + \nabla p =  \Delta \u - \div (\nabla \dr \odot \nabla \dr )\, , \\
    \div \u =0\, ,\\
    \partial_t \dr + \u\cdot \nabla \dr = \Delta \dr + |\nabla \dr|^2 \dr\,,\quad |\dr|=1\,,
  \end{array}\right.
\end{align}
which can be obtained by neglecting the Leslie stress and specifying some elastic constants. In 2-D case, global weak solutions with at most a finite number of singular times was proved by Lin-Lin-Wang \cite{Lin-Lin-Wang-ARMA2010}. The uniqueness of weak solutions was later on justified by Lin-Wang \cite{Lin-Wang-CAMS2010} and Xu-Zhang \cite{Xu-Zhang-JDE2012}. Recently, Lin and Wang proved global existence of weak solution for 3-D case in \cite{Lin-Wang-CPAM2016}.

For the more general parabolic Ericksen-Leslie's system,  local well-posedness is proved by Wang-Zhang-Zhang in \cite{Wang-Zhang-Zhang-ARMA2013}, and in \cite{Huang-Lin-Wang-CMP2014} regularity and existence of global solutions in $\mathbb{R}^2$ was established by Huang-Lin-Wang. The existence and uniqueness of weak solutions, also in $\mathbb{R}^2$ was proved by Hong-Xin and Li-Titi-Xin in \cite{Hong-Xin-2012} \cite{Li-Titi-Xin} respectively. Similar result was also obtained by Wang-Wang in \cite{Wang-Wang-2014}. For more complete review of the works for the parabolic Ericksen-Leslie's system, please see the reference listed above.

\subsubsection{$\rho_1 >0$, incompressible parabolic-hyperbolic model}
If $\rho_1>0$,  \eqref{PHLC} is an incompressible Navier-Stokes equations coupled with a wave map type system for which the corresponding mathematical theories are far from well-developed, comparing the corresponding parabolic model, which is Navier-Stokes coupled with a heat flow. The only notable exception might be for the most simplified model, say, in \eqref{NS-WM}, taking $\u=0$, and the spacial dimension is $1$. For this case, the system \eqref{NS-WM} can be reduced to a so-called nonlinear variational wave equation. Zhang and Zheng (later on with Bressan and others) studied systematically the dissipative and energy conservative solutions in series work starting from late 90's \cite{Zhang-Zheng-AA1998, Zhang-Zheng-ActaC1999, Zhang-Zheng-ARMA2000, Zhang-Zheng-CAMS2001, Zhang-Zheng-CPDE2001, Zhang-Zheng-PRSE2002, Zhang-Zheng-ARMA2003, Zhang-Zheng-AIPA2005, Bressan-Zhang-Zheng-ARMA2007, Zhang-Zheng-ARMA2010, Zhang-Zheng-CPAM2012, Chen-Zhang-Zheng-ARMA2013}.

For the multidimensional case, to our best acknowledgement, there was very few mathematical work on the original hyperbolic Ericksen-Leslie's system \eqref{PHLC}. De Anna and Zarnescu \cite{DeAnna-Zarnescu-2016} considered the inertial Qian-Sheng model of liquid crystals which couples a hyperbolic type equation involving a second order derivative with a forced incompressible Navier-Stokes equations. It is a system describing the hydrodynamics of nematic liquid crystals in the Q-tensor framework. They proved global well-posedness and twist-wave solutions. Furthermore, for the inviscid version of the Qian-Sheng model, in \cite{FRSZ-2016}, Feireisl-Rocca-Schimperna-Zarnescu proved a global existence of the {\em dissipative solution} which is inspired from that of incompressible Euler equation defined by P-L. Lions \cite{Lions-1996}. Recently, in \cite{Jiang-Luo-arXiv-2017} the first two-named authors of the current papers proved the local well-posedness of \eqref{PHLC} under the mild coefficients under which the basic energy law is dissipative, and with additional damping effect assumption, i.e. $\lambda_1 <0$, global in time classical solutions with small initial data was also proved. When this assumption is not satisfied, even for the simplest case \eqref{NS-WM}, the global well-posedness is open. Furthermore, when the bulk velocity $\u$ is given, the zero inertia limit $\rho_1 \rightarrow 0$ from hyperbolic to parabolic system is rigorously justified in \cite{Jiang-Luo-Tang-Zarnescu}.

\subsection{Previous results: compressible model}
For the compressible liquid crystal model, there are also extensive studies for simplified parabolic system, i.e. the following system which is basically a coupling of compressible Navier-Stokes equations and parabolic heat flow: \begin{equation}\label{compressible-parabolic}
  \begin{aligned}
    \left\{ \begin{array}{c}
      \p_t \rho + \div(\rho \u) = 0\, ,\\[2mm]
      \p_t ( \rho \u ) + \div ( \rho \u \otimes \u ) + a\nabla \rho^\gamma = \mathcal{L}\u-\div ( \nabla\dr\odot \nabla \dr-\tfrac12|\nabla \dr|^2I) \, ,\\[2mm]
      \partial_t{\dr} +\u\cdot\nabla\dr= \Delta \dr  + |\nabla \dr|^2\dr\, ,
    \end{array}\right.
  \end{aligned}
\end{equation}
where $\mathcal{L}\u= \mu\Delta\u+(\mu+\lambda)\nabla\div \u$. In dimension one, the existence of global strong solutions and weak solutions to \eqref{compressible-parabolic} has been obtained by \cite{DLWW-DSDC2012, DWW-DSDC2011} respectively. In dimension two, the existence of global weak solutions, under the assumptions that the image of the initial data of $\dr$ is contained in $\mathbb{S}^2_+$, was obtained by \cite{JJW-JFA2013}. In dimension three, the local existence of strong solutions has been studied by \cite{HWW-JDE2012} and \cite{HWW-ARMA2012}. The incompressible limit of compressible nematic liquid crystal flow  has been studied by \cite{DHWZ-JFA2013}. When considering the compressible nematic liquid crystal flow  under the assumption that the director $\dr$ has variable degree of orientations, the global existence of weak solutions in dimension three has been obtained by \cite{Liu-Qing-DSDC2013} and \cite{Wang-Yu-ARMA2012}. Recently, inspired by the work of \cite{Lin-Wang-CPAM2016} for parabolic incompressible flow, the corresponding global finite energy weak solutions to \eqref{compressible-parabolic} was proved in \cite{Lin-Lai-Wang-SIMA2015}. Notice that for general parabolic liquid crystal model in compressible flow, i.e. the  system \eqref{compressible-Liquid-Crystal-Model} without $\rho\ddot{\dr}$ term and with $\lambda_1 = -1$, there are very few studies so far.

\subsection{Main results of this paper}
In this paper, we study the Cauchy problem of the most general parabolic-hyperbolic Ericksen-Leslie's system in compressible flow \eqref{compressible-Liquid-Crystal-Model} with the initial conditions
\begin{equation} \label{Initial-Data}
  (\rho,\ \u,\ \dr,\ \dot{\dr}) (x,t) |_{t=0} = (\rho^{in},\ \u^{in},\ \dr^{in},\ \tilde{\dr}^{in}) (x) \in \R \times \R^N \times \mathbb{S}^{N-1} \times \R^N \, ,
\end{equation}
%\begin{equation} \label{Initial-Data}
%  \rho |_{t=0} = \rho^{in}(x) \in \R, \ \u|_{t=0} = \u^{in}(x) \in \R^N,\ \dr|_{t=0} = \dr^{in}(x) \in \mathbb{S}^{N-1}, \,\ \dot{\dr}|_{t=0} = {\tilde{\dr}}^{in}(x) \in \R^N\, ,
%\end{equation}
where $\dr^{in}$ and $\tilde{\dr}^{in}$ satisfy the geometric constraint $|\dr^{in}| =1$ and compatibility condition $\dr^{in} \cdot \tilde{\dr}^{in} =0$.

To state our main results, we first introduce some notations.
We denote by $\l \cdot, \cdot \r$ the usual $L^2$-inner product in $\mathbb{R}^N$, by $|\cdot|_{L^2}$, $|\cdot|_{H^s}$ its corresponding $L^2$-norm and higher order derivatives $H^s$-norm , respectively, and $|\cdot|_{\dot{H}^s}$ the homogeneous $H^s$-norm. We define the following two Sobolev weighted-norms as
\begin{align*}
  |f|_{H^s_{\phi}} := \Big( \sum_{k=0}^{s} \int_{\R^N} \phi |\nabla^k f|^2 \d x \Big)^{\frac{1}{2}} , \quad
  |f|_{\dot{H}^s_{\phi}} := \Big( \sum_{k=1}^{s} \int_{\R^N} \phi |\nabla^k f|^2 \d x \Big)^{\frac{1}{2}}\,,
\end{align*}
and denote the quantity $\mathcal{N}_s (\rho)$ as
\begin{align*}
  \mathcal{N}_s (\rho) = |\rho|^2_{\dot{H}^s_{\frac{p'(\rho)}{\rho}}} + \tfrac{2 a}{\gamma -1} |\rho|^\gamma_{L^\gamma} \,.
\end{align*}
Furthermore, we define the following energy:
\begin{equation}\label{energy-norm}
  E(t)= \mathcal{N}_s (\rho) \!+\! |\u|_{H^s_\rho}^2 \!+\! |\dot{\dr}|_{H^s_\rho }^2 \!+\! |\nabla \dr|_{H^s}^2\,.
\end{equation}
In particular, the initial energy is
$$
  E^{in} = \mathcal{N}_s (\rho^{in}) + | \u^{in}|_{H^s_{\rho^{in}}}^2 + | \tilde{\dr}^{in} |_{H^s_{\rho^{in}}}^2 + |\nabla \dr^{in}|_{H^s}^2 \, .
  $$

At last, the notation $A \lesssim B$ will be used in this paper to indicate that there exists some constant $C>0$ such that $A \leq CB$.

The $L^2$-estimate which is the so-called basic energy law plays a fundamental role in our analysis for \eqref{compressible-Liquid-Crystal-Model}. However, it is not {\em dissipative} for all the Leslie coefficients. We first give some constraints on the Leslie coefficients such that the basic energy law is dissipative, which is presented in Section 2. Under these coefficients constraints, we prove the local-in-time existence of classical solutions to the system \eqref{compressible-Liquid-Crystal-Model} with large initial data. More precisely, the first main result is stated as follows:

\begin{theorem}[Local existence]\label{theorem-1}
  Let $s>\tfrac{N}{2} +1$. If $E^{in} \!<\! +\infty$, $\underline{\rho} \!\leq\! \rho^{in} \!\leq\! \bar{\rho}$ for some constant $\underline{\rho}, \bar{\rho} >0$ and the Leslie coefficients satisfy
  $$
  \mu_1 \geq 0\,,\ \mu_4 >0\,,\ \tfrac{1}{2} \mu_4 + \xi \geq 0 \,, \ \lambda_1 \leq 0\,,\ \mu_5 + \mu_6 + \tfrac{\lambda^2_2}{\lambda_1} \geq 0\,,
  $$
  then there exist $T, C_0>0$, depending only on $E^{in}$ and the Leslie coefficients, such that the system \eqref{compressible-Liquid-Crystal-Model}-\eqref{Initial-Data} admits the unique solution $(\rho, \u, \dr)$ satisfying
  \begin{align*}
    \rho \in & L^\infty \big( 0,T; \dot{H}^s_{\frac{p'(\rho)}{\rho}} (\R^N) \cap L^\gamma (\R^N) \big) \,,\\[2mm]
    \u \in & L^\infty \big( 0,T; H^s_\rho (\R^N) \big) \cap L^2 \big( 0, T; H^{s+1} (\R^N) \big) \,, \\[2mm]
    \dot{\dr} \in & L^\infty \big( 0,T; H^s_\rho (\R^N) \big) \,,\ \nabla \dr \in L^\infty \big( 0,T; H^s(\R^N) \big) \,.
  \end{align*}
  Moreover, the energy inequality
  \begin{align*}
    E(t)+\! \tfrac{1}{2}\mu_4\! \int_{0}^{t}\! |\nabla \u|_{H^s (\R^N)}^2 \d s \!\leq\! C_0
  \end{align*}
  holds for all $t \in [0,T]$.
\end{theorem}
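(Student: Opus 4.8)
\medskip

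The plan is to combine the compressible Navier--Stokes machinery with the hyperbolic wave-map techniques developed for the incompressible model: first establish a closed \emph{a priori} bound for the energy $E(t)$ of \eqref{energy-norm} together with the dissipation $\tfrac12\mu_4\int_0^t|\nabla\u|^2_{H^s}\,\d s$, then construct the solution by a linearized iteration whose uniform control is supplied by that bound, and finally recover the geometric constraint and the uniqueness. One works on a short interval $[0,T]$ on which $\tfrac12\underline\rho\le\rho\le2\bar\rho$: this is propagated because $\rho$ solves the transport equation $\dot\rho=-\rho\,\div\u$, so $\rho$ equals $\rho^{in}$ times the exponential of $-\int_0^t\div\u$ along the flow of $\u$, and $|\div\u|_{L^\infty}\lesssim|\u|_{H^s}$ by the Sobolev embedding $s>\tfrac N2+1$; hence for $T$ small the bounds survive and the weighted norms $|\cdot|_{H^s_\rho}$ stay comparable to $|\cdot|_{H^s}$.

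For the \emph{a priori} estimate, apply $\nabla^k$ to each equation and test. From the mass equation, testing $\nabla^k\rho$ ($1\le k\le s$) against $\tfrac{p'(\rho)}{\rho}\nabla^k\rho$ gives $\tfrac12\tfrac{d}{dt}|\rho|^2_{\dot H^s_{p'(\rho)/\rho}}$, a top-order term $\int p'(\rho)\,\nabla^k\rho\,\nabla^k\div\u\,\d x$, and lower-order terms (commutators with $\u\cdot\nabla$, and $\partial_t(p'(\rho)/\rho)$, which is harmless). From the momentum equation, testing $\nabla^k$ of it against $\nabla^k\u$ ($0\le k\le s$) produces $\tfrac12\tfrac{d}{dt}|\u|^2_{H^s_\rho}$; the $\div\Sigma_1$ term gives $-\tfrac12\mu_4|\nabla\u|^2_{H^s}-(\tfrac12\mu_4+\xi)|\div\u|^2_{H^s}$; the pressure term $\langle\nabla^k\nabla p,\nabla^k\u\rangle$ has leading part $-\int p'(\rho)\,\nabla^k\rho\,\nabla^k\div\u\,\d x$, which exactly cancels the top-order term from the mass equation --- this cancellation is the reason the weight $p'(\rho)/\rho$ and the potential $\tfrac{2a}{\gamma-1}|\rho|^\gamma_{L^\gamma}$ are built into $E$. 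From the director equation, testing $\nabla^k$ of it against $\nabla^k\dot\dr$ ($0\le k\le s$): $\rho\ddot\dr$ gives $\tfrac12\tfrac{d}{dt}|\dot\dr|^2_{H^s_\rho}$ (using $\div(\rho\u)=-\partial_t\rho$), $\Delta\dr$ gives $\tfrac12\tfrac{d}{dt}|\nabla\dr|^2_{H^s}$ up to a transport remainder $\sim\int\div\u\,|\nabla^{k+1}\dr|^2$, the $\Gamma\dr$ term loses its top-order contribution because $\dr\cdot\dot\dr=0$ and $\dr\cdot\nabla\dr=0$ and is otherwise controlled by $|\Gamma|_{L^\infty}\lesssim P(E)$ for a polynomial $P$, and whenever $\ddot\dr$ surfaces inside a commutator it is replaced via the equation by $\rho^{-1}(\Delta\dr+\cdots)\in H^{s-1}$. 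Summing the three: the $\div\Sigma_2$ terms cancel the $\Delta\dr$-transport remainder by the classical identity $\div(\tfrac12|\nabla\dr|^2 I-\nabla\dr\odot\nabla\dr)=-(\nabla\dr)^\top\Delta\dr$, and the $\div\Sigma_3=\div\tilde\sigma$ terms combine with the $\lambda_1(\dot\dr+\B\dr)+\lambda_2\A\dr$ terms into the Leslie dissipation quadratic form in $\N$ and $\A\dr$ together with $\mu_1(\dr^\top\A\dr)^2$; completing the square, this equals $-\lambda_1\big|\N-\tfrac{\lambda_2}{\lambda_1}\A\dr\big|^2+(\mu_5+\mu_6+\tfrac{\lambda_2^2}{\lambda_1})|\A\dr|^2+\mu_1(\dr^\top\A\dr)^2\ge0$ precisely under the constraints $\mu_1\ge0$, $\lambda_1\le0$, $\mu_5+\mu_6+\tfrac{\lambda_2^2}{\lambda_1}\ge0$, so --- together with $(\tfrac12\mu_4+\xi)|\div\u|^2_{H^s}\ge0$, which uses $\tfrac12\mu_4+\xi\ge0$ --- these terms are discarded, leaving $\tfrac12\mu_4|\nabla\u|^2_{H^s}$ as the kept dissipation. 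Everything remaining is a genuine lower-order error, bounded by Moser product and commutator estimates and the embedding $s>\tfrac N2+1$ by $C\,P(E)$ (at most an $\eps|\nabla\u|^2_{H^s}$ being reabsorbed), giving $\tfrac{d}{dt}E+\tfrac12\mu_4|\nabla\u|^2_{H^s}\le C\,P(E)$; a continuity argument then yields $T$ and $C_0$ with the claimed bound.

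For existence, iterate: given $(\rho^n,\u^n,\dr^n)$, let $\rho^{n+1}$ solve the linear transport equation driven by $\u^n$, let $\u^{n+1}$ solve the linear parabolic equation with the nonlinear coefficients frozen at level $n$ (keeping $\div\Sigma_1(\u^{n+1})$ and $\nabla p(\rho^{n+1})$), and let $\dr^{n+1}$ solve the linear wave-type equation with $\rho^n$, $\B^n$, $\A^n$ and the function $\Gamma$ at level $n$; each linear problem is classical. A regularization (mollified data, or an $\eps\Delta$ added to the transport equation) legitimizes the computations above and is removed at the end. The \emph{a priori} estimate applies uniformly to the iterates, a contraction estimate in a lower-order norm (say $|\cdot|_{L^2}$ for $\rho,\u,\dot\dr$ and $|\nabla\cdot|_{L^2}$ for $\dr$) gives convergence to a solution and, applied to the difference of two solutions of \eqref{compressible-Liquid-Crystal-Model}, gives uniqueness; the regularity and the energy inequality pass to the limit by weak-$*$ lower semicontinuity. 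The constraint $|\dr|=1$ is recovered from the special form of $\Gamma$ in \eqref{Lagrangian}: multiplying the director equation by $\dr$ and using $\B\dr\cdot\dr=0$, one finds that $q:=|\dr|^2-1$ satisfies the homogeneous equation $\rho\ddot q-\Delta q-\lambda_1\dot q-2\Gamma q=0$ with $q|_{t=0}=|\dr^{in}|^2-1=0$ and $\dot q|_{t=0}=2\,\dr^{in}\cdot\tilde\dr^{in}=0$, hence $q\equiv0$; at the iteration level one keeps $\dot\dr^{n+1}$ and $\nabla\dr^{n+1}$ inside $\Gamma$ (a short inner loop absorbing the resulting mild nonlinearity) so this identity holds exactly, or else carries the defect $q^n$ through the convergence argument and shows it vanishes in the limit.

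The main obstacle is the director step. Because the director equation is hyperbolic there is no smoothing to absorb its top-order errors, yet the coupling through $\Gamma\dr$ is quasilinear --- $\Gamma$ carries $\rho|\dot\dr|^2$ and $|\nabla\dr|^2$ at the same differential level as the constituents of $E$ --- so closing the estimate depends entirely on the geometric identities $\dr\cdot\dot\dr=0$, $\dr\cdot\nabla\dr=0$ and on keeping $|\dr|=1$ exact all along the approximation. Reconciling this with the compressible Navier--Stokes cancellation (which forces the weighted norms appearing in $E$) and with exhibiting the Leslie dissipation quadratic form at every differential order $\nabla^k$, so that the coefficient constraints can be invoked, is the crux of the argument.
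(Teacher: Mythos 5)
Your proposal follows essentially the same route as the paper: a weighted $H^s$ energy estimate closed via the same mass--momentum pressure cancellation and the Leslie dissipation quadratic form produced at each order $\nabla^k$ by completing the square, an iteration scheme in which the density and velocity steps are linearized while the director step is solved with the full $\Gamma$ depending on $\dr^{n+1},\dot\dr^{n+1}$ (the paper devotes Section~\ref{Sect-4} and Proposition~\ref{Proposition-1} to exactly this sub-problem via a mollifier scheme $\J_\eps$, which is your ``short inner loop''), recovery of $|\dr|=1$ from the homogeneous equation for $q=|\dr|^2-1$ (Lemma~\ref{lemma-d}), and the density bounds via characteristics (Lemma~\ref{rho-bound-Lemma}). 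The only place your sketch understates the work is the uniform lower bound on the iterates' existence times: because the iterative energy inequality couples levels $n+1,n,n-1,\dots$, the sequence $\{T_n\}$ need not be monotone, and the paper's Lemma~\ref{lemma-5-2} extracts a decreasing subsequence $\{T_{n_q}\}$ and runs a separate continuity argument on it to produce a single $T>0$.
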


We then study the global existence of classical solutions to the system \eqref{compressible-Liquid-Crystal-Model}. We rewrite the density function $\rho(x,t)$ as the form $$\rho(x,t) = 1 + \vr(x,t).$$
Consequently, the system \eqref{compressible-Liquid-Crystal-Model} is of a new form
\begin{equation}\label{Expan-System}
  \begin{aligned}
  \left \{ \begin{array} {c}
     \p_t \vr + \u \cdot \nabla \vr + (1 + \vr) \div \u = 0 \, , \\[1.5mm]
     \p_t \u + \u \cdot \nabla \u + \tfrac{p'(1+\vr)}{1+\vr} \nabla \vr = \tfrac{1}{1 + \vr} \div (\Sigma_1 + \Sigma_2 + \Sigma_3) \, , \\[1.5mm]
     \ddot{\dr} = \tfrac{1}{1 + \vr} \Delta \dr + \tfrac{1}{1 + \vr} \widetilde{\Gamma} \dr + \tfrac{1}{1 + \vr} \lambda_1 (\dot{\dr} + \B \dr) + \tfrac{1}{1 + \vr} \lambda_2 \A \dr \, ,
  \end{array} \right.
  \end{aligned}
\end{equation}
where $\widetilde{\Gamma} = - (1+\vr) |\dot{\dr}|^2 + |\nabla \dr|^2 - \lambda_2 \dr^\top \A \dr $.
The initial data of the system \eqref{Expan-System} is imposed on
\begin{align}\label{Expan-Initial-Data}
  \vr \big{|}_{t=0} = \vr^{in}(x), \ \u \big{|}_{t=0} = \u^{in}(x), \ \dr \big{|}_{t=0} = \dr^{in}(x), \ \dot{\dr} \big{|}_{t=0} = \tilde{\dr}^{in}(x) \,,
\end{align}
in which the function $\vr^{in}(x) = \rho^{in} (x) - 1$. Here the (vector) functions $\rho^{in} (x)$, $\u^{in} (x)$, $\dr^{in} (x)$ and $\tilde{\dr}^{in} (x)$ are given in \eqref{Initial-Data}.

We define the initial energy as
\begin{align}\label{Expan-Initial-Energ}
  \tilde{E}^{in} = |\u^{in}|_{H^s}^2 + |\vr^{in}|_{H^s}^2 + |\tilde{\dr}^{in}|_{H^s}^2 + |\nabla \dr^{in}|_{H^s}^2 \,.
\end{align}
We prove that if the Leslie coefficients relations in Theorem \ref{theorem-1} still hold, and, in addition, $\lambda_1 <0$, then the system \eqref{Expan-System} with the initial conditions \eqref{Expan-Initial-Data} has the unique global classical solution under the small size of the initial energy $\tilde{E}^{in}$.

Now, we precisely state our second main theorem of this paper as follows.
\begin{theorem}[Global existence]\label{theorem-2}
  Let $s > \tfrac{N}{2} +1$ and the Leslie coefficients satisfy
  $$
  \mu_1 \geq 0, \ \mu_4 > 0, \ \tfrac{1}{2} \mu_4 + \xi \geq 0 \,, \ \lambda_1 < 0, \ \mu_5 + \mu_6 + \tfrac{\lambda_2^2}{\lambda_1} \geq 0 \,.
  $$
   Then there exists an $\eps >0$, depending only on Leslie coefficients, such that if $\tilde{E}^{in} < \eps$, then the system \eqref{Expan-System}-\eqref{Expan-Initial-Data} admits the unique global solution $(\vr, \u, \dr)$ satisfying
  \begin{align*}
    & \vr,\ \dot{\dr},\ \nabla \dr \in L^\infty (\R^+; H^s(\R^N))\,, \\
    & \u \in L^\infty (\R^+; H^s(\R^N)) \cap L^2(\R^+; \dot{H}^{s+1}(\R^N)) \,.
  \end{align*}

  Furthermore, the energy bound
  \begin{align*}
  & \sup_{t \geq 0} \big( |\u|_{H^s}^2 + |\vr|_{H^s}^2 + |\dot{\dr}|_{H^s}^2 + |\nabla \dr|_{H^s}^2 \big) (t) \\
  & + \tfrac{1}{2} \mu_4 \int_{0}^{\infty} |\nabla \u|_{H^s}^2 \d t + (\tfrac{1}{2} \mu_4 + \xi) \int_{0}^{\infty} |\div \u|_{H^s}^2 \d t \leq C_1 \tilde{E}^{in} \,,
  \end{align*}
  holds for some constant $C_1>0$, depending only on Leslie coefficients.
\end{theorem}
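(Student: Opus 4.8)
\emph{Overall plan.} The scheme is the usual one: local existence plus a uniform-in-time a priori bound, closed by a continuity argument. Since $s>\tfrac N2+1$, smallness of $\tilde E^{in}$ gives $|\vr^{in}|_{L^\infty}\lesssim|\vr^{in}|_{H^s}\ll1$, so $\rho^{in}=1+\vr^{in}$ stays bounded away from vacuum and $E^{in}\lesssim\tilde E^{in}<+\infty$; Theorem~\ref{theorem-1} then provides a unique local classical solution on some $[0,T]$ with $1+\vr$ uniformly bounded away from $0$. It remains to prove: there is $C_1>0$, depending only on the Leslie coefficients, so that any such solution with $\sup_{[0,T]}\mathcal E$ small obeys $\sup_{[0,T]}\mathcal E+\int_0^T\mathcal D\le C_1\tilde E^{in}$, where
\begin{align*}
  \mathcal E(t)&=|\u|_{H^s}^2+|\vr|_{H^s}^2+|\dot\dr|_{H^s}^2+|\nabla\dr|_{H^s}^2\,,\\
  \mathcal D(t)&=|\nabla\u|_{H^s}^2+|\div\u|_{H^s}^2+|\dot\dr|_{H^s}^2+|\nabla^2\dr|_{H^{s-1}}^2+|\nabla\vr|_{H^{s-1}}^2\,.
\end{align*}
A standard bootstrap then forces the solution to be global once $\tilde E^{in}<\eps$, and the stated energy bound is the $\tfrac12\mu_4$- and $(\tfrac12\mu_4+\xi)$-weighted part of $\int_0^\infty\mathcal D$.

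\emph{Step 1: high-order energy law.} I would apply $\nabla^k$, $0\le k\le s$, to the three equations of \eqref{Expan-System}, pair them respectively with $\tfrac{p'(1+\vr)}{1+\vr}\nabla^k\vr$, $(1+\vr)\nabla^k\u$ and $(1+\vr)\nabla^k\dot\dr$, sum over $k$ and integrate by parts. The leading terms should reproduce, at each order, the dissipative structure of the basic energy law of Section~2: $\Sigma_1$ gives $\tfrac12\mu_4|\nabla\u|_{H^s}^2+(\tfrac12\mu_4+\xi)|\div\u|_{H^s}^2$; $\tilde\sigma$ combines with the kinematic transport $g\cdot\N$ from the director equation into a nonnegative ``Leslie dissipation'' which, under $\mu_1\ge0$, $\mu_5+\mu_6+\tfrac{\lambda_2^2}{\lambda_1}\ge0$ and $\lambda_1<0$, controls in particular $-\lambda_1|\dot\dr|_{H^s}^2$; and the pressure and $\widetilde{\Gamma}\dr$ terms drop out of the top-order balance, being cancelled by mass conservation and by the constraint $|\dr|=1$ respectively. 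All remaining terms are commutators or genuine nonlinearities, each of which, by Moser-type product and commutator estimates in $H^s$ ($s>\tfrac N2+1$) together with $\tfrac12\le1+\vr\le\tfrac32$, I expect to bound by $\sqrt{\mathcal E}\,\mathcal D$ plus higher powers of $\mathcal E$. This should yield $\tfrac{\d}{\d t}E_{\mathrm{en}}+c_0\big(|\nabla\u|_{H^s}^2+|\div\u|_{H^s}^2+|\dot\dr|_{H^s}^2\big)\lesssim\sqrt{\mathcal E}\,\mathcal D$ with $E_{\mathrm{en}}\sim\mathcal E$.

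\emph{Step 2: correctors and closing.} Since the energy law carries no dissipation for $\vr$ or $\nabla\dr$, two correctors are needed. For the density, using the momentum equation to rewrite $\tfrac{p'(1+\vr)}{1+\vr}\nabla\vr$ as $\p_t\u$ plus lower-order terms, the functional $\sum_{k\le s-1}\int\nabla^k\u\cdot\nabla^{k+1}\vr$ should have time derivative $-c_1|\nabla\vr|_{H^{s-1}}^2$ modulo $|\nabla\u|_{H^s}^2+|\div\u|_{H^s}^2$ and $\sqrt{\mathcal E}\,\mathcal D$ (the transport equation for $\vr$ taking care of $\p_t\vr$). For the director, the constraint $|\dr|=1$ means $\dr$ is neither small nor square-integrable, so the textbook damped-wave corrector $\int\dot\dr\cdot\dr$ is unavailable; instead I would use $\int\nabla^{k+1}\dot\dr:\nabla^{k+1}\dr=-\int\nabla^k\dot\dr\cdot\nabla^k\Delta\dr$, $k\le s-1$, whose time derivative, via the third equation of \eqref{Expan-System}, has leading term $-\int(1+\vr)^{-1}|\nabla^k\Delta\dr|^2$, i.e. $-c_2|\nabla^2\dr|_{H^{s-1}}^2$, while the contributions of the damping term, of $\widetilde{\Gamma}\dr$, $\B\dr$ and $\A\dr$, and of the mismatch between $\p_t$ and $\p_t+\u\cdot\nabla$ should, after Young's inequality, be either $\lesssim\sqrt{\mathcal E}\,\mathcal D$ or absorbable into the $\mu_4$-viscous dissipation and the $-\lambda_1|\dot\dr|_{H^s}^2$ damping. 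Then $\mathcal L:=E_{\mathrm{en}}+\beta_1(\text{density corrector})+\beta_2(\text{director corrector})$, with $\beta_1,\beta_2$ chosen small, satisfies $\mathcal L\sim\mathcal E$ and $\tfrac{\d}{\d t}\mathcal L+\kappa\,\mathcal D\lesssim\sqrt{\mathcal E}\,\mathcal D$ for some $\kappa>0$. Choosing $\eps$ so small that $\mathcal E\le2C_1\tilde E^{in}<2C_1\eps$ keeps $\sqrt{\mathcal E}$ below the threshold at which the right-hand side is absorbed, one gets $\tfrac{\d}{\d t}\mathcal L+\tfrac\kappa2\mathcal D\le0$, hence $\mathcal E(t)+\int_0^t\mathcal D\lesssim\mathcal L(0)\lesssim\tilde E^{in}$ with constants independent of $T$; this closes the bootstrap, extends the solution to $\R^+$, and yields the claimed bound. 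Uniqueness is inherited from Theorem~\ref{theorem-1}.

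\emph{Expected main difficulty.} The hard part will be the director corrector in Step~2: the third equation of \eqref{Expan-System} is hyperbolic with only the first-order damping $\lambda_1\dot\dr$ and no parabolic smoothing, and $|\dr|=1$ blocks the standard corrector, so one must work with $\int\nabla\dot\dr:\nabla\dr$ and verify that the variable coefficient $(1+\vr)^{-1}$, the multiplier term $\widetilde{\Gamma}\dr$ (whose summand $-\lambda_2\dr^\top\A\dr\,\dr$ is only \emph{linear} in $\nabla\u$), and the discrepancy between $\p_t$ and the material derivative each generate errors that are either genuinely super-linear in $(\mathcal E,\mathcal D)$ or absorbable, with small coefficient, into the sign-definite $\mu_4$- and $\lambda_1$-dissipations. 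A related subtlety is to check that the Leslie cancellation between $\tilde\sigma$ and $g\cdot\N$ in Step~1 persists after $\nabla^k$-differentiation for every $k\le s$, so that the a priori borderline products of $\nabla\u$ and $\dot\dr$ are never left as uncontrolled errors.
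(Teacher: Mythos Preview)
Your proposal is correct and follows essentially the same route as the paper: the paper too combines the $H^s$ energy law (their Steps~1--2, giving the Leslie dissipation $\tfrac12\mu_4|\nabla\u|_{H^s}^2+(\tfrac12\mu_4+\xi)|\div\u|_{H^s}^2-\lambda_1\sum_k|\nabla^k\dot\dr+\ldots|_{L^2}^2$ at each order) with two small-coefficient correctors --- $\eta_1\langle\u,\nabla\vr\rangle_{H^{s-1}}$ from testing $\nabla^k$ of the momentum equation against $\nabla^{k+1}\vr$, and $\eta_2\langle\dot\dr,\dr\rangle_{\dot H^s}$ from testing $\nabla^k$ of the director equation against $\nabla^k\dr$, $1\le k\le s$ --- and closes a differential inequality of the form $\tfrac{\d}{\d t}\mathcal E_\eta+\mathcal D_\eta\le C\mathcal D_\eta\sum_{k=1}^{s+4}\mathcal E_\eta^{k/2}$ by a continuity argument. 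Your director corrector $\int\nabla^{k+1}\dot\dr:\nabla^{k+1}\dr=-\int\nabla^k\dot\dr\cdot\nabla^k\Delta\dr$ is, after index shift and one integration by parts, exactly the paper's $\langle\nabla^j\dot\dr,\nabla^j\dr\rangle$; the resulting good term $-\int(1+\vr)^{-1}|\nabla^k\Delta\dr|^2$ is equivalent to the paper's $-\int(1+\vr)^{-1}|\nabla^{j+1}\dr|^2$, and the bad-sign $|\nabla^j\dot\dr|_{L^2}^2$ is in both cases absorbed by the $\lambda_1$-damping after choosing $\eta_2$ small.
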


We now sketch the main ideas and the novelties of the proof of the above theorems. It is well-known that the geometric constraint $|\dr|=1$ brings difficulties (particularly in higher order nonlinearities) on the Ericksen-Leslie's system, even in the parabolic case. We treat this difficulty as we did in \cite{Jiang-Luo-arXiv-2017} the incompressible hyperbolic case \eqref{PHLC}. More specifically, if the initial data $\dr^{in},\ {\tilde\dr}^{in}$ satisfy $|\dr^{in}|=1$ and the compatibility condition $ {\tilde\dr}^{in} \cdot \dr^{in} = 0$, then for the {\em solution} to \eqref{compressible-Liquid-Crystal-Model}, the constraint $|\dr|=1$ will be {\em forced} to hold. Hence, these constraints need only be given on the initial data, while in the system \eqref{compressible-Liquid-Crystal-Model}, we do not need the constraint $|\dr| =1$ explicitly any more.

Comparing to our previous work on the parabolic-hyperbolic Ericksen-Leslie system for incompressible flow \cite{Jiang-Luo-arXiv-2017}, in compressible case considered in the current paper, there appears some new difficulties, in particular on the estimates of the density. To overcome these difficulties we employ some techniques inspired by the studies in compressible Navier-Stokes equations. The first is the $L^\infty$-bound of the density $\rho (x,t) $, which is derived by the following inequalities from the mass conservation law under the initial density $\rho^{in} (x)$ with positive lower bound $\underline{\rho}$ and upper bound $\overline{\rho}$ (see Lemma \ref{rho-bound-Lemma}):
\begin{equation}\label{Bd-rho}
   \underline{\rho} \exp\Big\{ - \int_{0}^{t} |\div \u|_{L^\infty}(s) \d s\Big\} \leq \rho(x,t) \leq \overline{\rho} \exp\Big\{ \int_{0}^{t} |\div \u|_{L^\infty} (s) \d s\Big\} \,.
\end{equation}
As a result, in the derivation of the {\em a priori} estimates, the $L^\infty$-bounds of the density $\rho (x,t) $ and $\frac{1}{\rho (x,t)}$ can be bounded by the $L^\infty$-norm of $\div \u (x,t)$.

Our next step is that for a given smooth density $\rho(x,t)$ and velocity field $\u(x,t)$, the equation of $\dr(x,t)$ can be solved locally in time. We use the mollifier method to construct the approximate solutions {\em without} assuming the constraint $|\dr|=1$. The key point is that we can derive the uniform energy estimate also without this constraint. Then as mentioned above, the condition $|\dr|=1$ will be automatically obeyed in the time interval that the solution is smooth.

Now we carefully design an iteration scheme to construct approximate solutions $(\rho^k, \u^k, \dr^k)$: solve the mass conservation equation of $\rho^{k+1}$ with the velocity field $\u^k$, the Stokes type equation of $\u^{k+1}$ in terms of $\rho^k$, $\u^k$ and $\dr^k$, and the wave map type equation of $\dr^{k+1}$ in terms of $\rho^k$ and $\u^{k}$. Note that the two keys in this construction are as the follows: 1. the inequalities \eqref{Bd-rho} for $\rho^{k+1}$ are preserved in the iteration approximate systems, so that the norms $| \rho^{k+1}(\cdot, t) |_{L^\infty}$ and $\big| \frac{1}{\rho^{k+1}(\cdot, t)} \big|_{L^\infty}$ are dominated by $| \div \u^{k} ( \cdot, t ) |_{L^\infty} $; 2. the geometric constraint of $\dr^k$ will not be a trouble. So we can derive the uniform energy estimate by assuming $|\dr|=1$ which will significantly simply the process. We emphasize that in this step the basic energy law play a fundamental role: the main term of the higher order energy are kept in the form of the basic $L^2$-estimate. Thus the existence of the local-in-time smooth solutions can be proved.

To prove the global-in-time smooth solutions, the dissipation in the {\em a priori} estimate in the last step is not enough. We assume furthermore the coefficient $\lambda_1$ is strictly negative which will give us an additional dissipation on the direction field $\dr$, based on a new {\em a priori} estimate by carefully designing the energy and energy dissipation. Since the pressure $p(\rho)$, depending on the density function $\rho$, satisfies $p'(\rho) > 0$ for $\rho > 0$, the term $\nabla p (\rho)$ in the velocity equation of \eqref{compressible-Liquid-Crystal-Model} will give us a more dissipation on the density $\rho$ by multiplying $\nabla \rho$ (or $\nabla^k \rho$ for higher order estimates) in the $\u$-equation. Then we can show the existence of the global smooth solution with the small initial data.

This paper is organized as follows: we devote the Section \ref{Sect-2} to prove the basic energy law and give the conditions on the coefficients such that the energy is dissipative. In Section \ref{Sect-3}, the {\em a priori} estimate of the equations \eqref{compressible-Liquid-Crystal-Model} is obtained by using energy method. The local well-posedness of $\dr$ for a given density $\rho$ and bulk velocity field $\u$ is then constructed in Section \ref{Sect-4}, which will be used in constructing the approximate scheme of the system \eqref{compressible-Liquid-Crystal-Model}-\eqref{Initial-Data}. In Section \ref{Sect-5}, the local existence of the system \eqref{compressible-Liquid-Crystal-Model} with large initial data, namely, Theorem \ref{theorem-1}  is proved through obtaining uniform energy estimate of the approximate system \eqref{Approx-Equat}. Finally the proof of Theorem \ref{theorem-2} are presented in Section \ref{Sect-6}.

\section{Basic Energy law}\label{Sect-2}
The main purpose of this section is to deduce the basic energy law of the liquid crystal model \eqref{compressible-Liquid-Crystal-Model}, which will play an important role in the energy estimate. We shall assume first that the solutions $ (\rho, \u, \dr) $ to \eqref{compressible-Liquid-Crystal-Model}-\eqref{Initial-Data} are sufficiently smooth.

\begin{proposition}\label{Prop-Basic-Ener-Law}
  If $ (\rho, \u, \dr) $ is a smooth solution to the system \eqref{compressible-Liquid-Crystal-Model}, then the following energy identity holds:
  \begin{equation}\label{Basic-Ener-Law}
    \begin{aligned}
      & \tfrac{1}{2} \tfrac{\d}{\d t} \int_{\mathbb{R}^n} \Big{(} \tfrac{2a}{\gamma-1} \rho^\gamma + \rho (|\u|^2 + |\dot{\dr}|^2) + |\nabla \dr|^2 \Big{)} \d x + \tfrac{1}{2} \mu_4 |\nabla \u|^2_{L^2} + ( \tfrac{1}{2} \mu_4 + \xi ) |\div \u|^2_{L^2} \\
      & + \mu_1 |\dr^\top \A \dr|^2_{L^2} - \lambda_1 |\dot{\dr} + \B \dr + \tfrac{\lambda_2}{\lambda_1} \A \dr|^2_{L^2} + ( \mu_5 + \mu_6 + \tfrac{\lambda_2^2}{\lambda_1} ) |\A \dr|^2_{L^2} = 0\,.
    \end{aligned}
  \end{equation}
  Moreover, the basic energy law \eqref{Basic-Ener-Law} is dissipated if and only if Leslie coefficients satisfy
  $$ \mu_1 \geq 0\,, \mu_4 > 0\,, \tfrac{1}{2} \mu_4 + \xi \geq 0 \,, \lambda_1 \leq 0\,, \mu_5 + \mu_6 + \tfrac{\lambda_2^2}{\lambda_1} \geq 0\,. $$
\end{proposition}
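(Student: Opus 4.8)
The plan is to carry out the standard energy method: pair the momentum equation of \eqref{compressible-Liquid-Crystal-Model} with $\u$, pair the director equation with $\dot{\dr}$, add the two resulting identities, and simplify using the mass conservation law together with the constraint $|\dr|=1$. Since $\p_t\rho+\div(\rho\u)=0$, pairing $\p_t(\rho\u)+\div(\rho\u\otimes\u)$ with $\u$ yields $\tfrac12\tfrac{\d}{\d t}\int\rho|\u|^2\,\d x$, and since $\rho\ddot{\dr}=\rho(\p_t+\u\cdot\nabla)\dot{\dr}$, pairing it with $\dot{\dr}$ yields $\tfrac12\tfrac{\d}{\d t}\int\rho|\dot{\dr}|^2\,\d x$; moreover $|\dr|=1$ forces $\dr\cdot\dot{\dr}=\tfrac12(\p_t+\u\cdot\nabla)|\dr|^2=0$, so the Lagrange multiplier term $\int\Gamma\,\dr\cdot\dot{\dr}$ drops out entirely. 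Finally, with $p=a\rho^\gamma$ the mass equation gives $\int\nabla p\cdot\u\,\d x=\tfrac{\d}{\d t}\int\tfrac{a}{\gamma-1}\rho^\gamma\,\d x$. Together these produce the full time derivative appearing in \eqref{Basic-Ener-Law}.

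Next I would handle the Newtonian and Ericksen stresses. Integrating $\int\div\Sigma_1\cdot\u$ by parts and splitting $\nabla\u=\A+\B$ gives exactly $-\tfrac12\mu_4|\nabla\u|^2_{L^2}-(\tfrac12\mu_4+\xi)|\div\u|^2_{L^2}$. For $\Sigma_2=\tfrac12|\nabla\dr|^2I-\nabla\dr\odot\nabla\dr$, pairing with $\u$ produces $-\tfrac12\int|\nabla\dr|^2\div\u\,\d x+\int\p_i\dr_k\p_j\dr_k\p_j\u_i\,\d x$, while $\int\Delta\dr\cdot\dot{\dr}=-\int\nabla\dr:\nabla\dot{\dr}$, and expanding $\nabla\dot{\dr}$ via $\dot{\dr}=\p_t\dr+\u\cdot\nabla\dr$ extracts $-\tfrac12\tfrac{\d}{\d t}\int|\nabla\dr|^2$ together with precisely the transport commutators $-\int\p_i\dr_k\p_j\dr_k\p_j\u_i\,\d x+\tfrac12\int|\nabla\dr|^2\div\u\,\d x$, which cancel the $\Sigma_2$ contribution. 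After this bookkeeping one is left with
\begin{equation*}
  \tfrac12\tfrac{\d}{\d t}\int\Big(\tfrac{2a}{\gamma-1}\rho^\gamma+\rho(|\u|^2+|\dot{\dr}|^2)+|\nabla\dr|^2\Big)\d x+\tfrac12\mu_4|\nabla\u|^2_{L^2}+(\tfrac12\mu_4+\xi)|\div\u|^2_{L^2}=\mathcal{R}\,,
\end{equation*}
where $\mathcal{R}:=\int\div\tilde{\sigma}\cdot\u\,\d x+\int(\lambda_1\N+\lambda_2\A\dr)\cdot\dot{\dr}\,\d x$ and $\N=\dot{\dr}+\B\dr$.

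The crux is the identity $\mathcal{R}=-\mu_1|\dr^\top\A\dr|^2_{L^2}+\lambda_1|\N+\tfrac{\lambda_2}{\lambda_1}\A\dr|^2_{L^2}-(\mu_5+\mu_6+\tfrac{\lambda_2^2}{\lambda_1})|\A\dr|^2_{L^2}$. To obtain it I integrate $\int\div\tilde{\sigma}\cdot\u=-\int\tilde{\sigma}_{ji}\p_j\u_i\,\d x$ by parts, insert the five terms of $\tilde{\sigma}$ from \eqref{tilde-sigma}, and reduce each using $\p_j\u_i=\A_{ij}+\B_{ij}$ and the pointwise identities $\dr_j\p_j\u_i=(\A\dr)_i-(\B\dr)_i$, $\dr_i\p_j\u_i=(\A\dr)_j+(\B\dr)_j$, $\dr_i\dr_j\B_{ij}=0$; in parallel I expand $\int(\lambda_1\N+\lambda_2\A\dr)\cdot\dot{\dr}$ using $\dot{\dr}=\N-\B\dr$. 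Collecting the contributions and invoking the Leslie relations \eqref{Coefficients-Relations} — $\lambda_1=\mu_2-\mu_3$, $\lambda_2=\mu_5-\mu_6$, and Parodi's $\mu_2+\mu_3=\mu_6-\mu_5$ — all terms carrying $\B\dr$ cancel pairwise, and the remainder is $\int\big(-\mu_1(\dr^\top\A\dr)^2+\lambda_1|\N|^2+2\lambda_2\,\N\cdot\A\dr-(\mu_5+\mu_6)|\A\dr|^2\big)\d x$; completing the square in $\N$ gives the stated form of $\mathcal{R}$, and moving its three terms to the left-hand side of the previous display yields \eqref{Basic-Ener-Law}. I expect this to be the main obstacle: conceptually routine, but it requires careful index bookkeeping to verify that the $\B\dr$-cross terms really vanish and that the coefficient of $\N\cdot\A\dr$ collapses to $2\lambda_2$ (the latter being precisely where Parodi's relation is needed).

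Finally, for the dissipativity statement: the identity \eqref{Basic-Ener-Law} displays the energy dissipation as a sum of five quadratic terms, so it is a genuine (nonincreasing-energy) dissipation exactly when each of these terms is nonnegative. Under the five listed inequalities this is clear, since then $\tfrac12\mu_4\ge0$, $\tfrac12\mu_4+\xi\ge0$, $\mu_1\ge0$, $-\lambda_1\ge0$ and $\mu_5+\mu_6+\tfrac{\lambda_2^2}{\lambda_1}\ge0$; this gives the "if" part. Conversely, since $|\nabla\u|^2$, $|\div\u|^2$, $|\dr^\top\A\dr|^2$, $|\N+\tfrac{\lambda_2}{\lambda_1}\A\dr|^2$ and $|\A\dr|^2$ are all attained nonzero along suitable explicit smooth solutions — e.g. $\dr$ spatially constant with $\u$ a rigid rotation, a homogeneous dilation, or a simple shear, and $\dot{\dr}$ chosen orthogonal to $\dr$ — nonnegativity of the corresponding terms forces $\mu_1\ge0$, $\mu_4\ge0$, $\tfrac12\mu_4+\xi\ge0$, $\lambda_1\le0$ and $\mu_5+\mu_6+\tfrac{\lambda_2^2}{\lambda_1}\ge0$, while the strict sign $\mu_4>0$ is the standing non-degeneracy requirement on the viscosity. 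This completes the characterization.
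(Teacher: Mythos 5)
Your proposal follows essentially the same route as the paper's proof: pair the momentum equation with $\u$, the director equation with $\dot{\dr}$, and the mass equation with $\tfrac{\gamma p(\rho)}{(\gamma-1)\rho}$; cancel the pressure and Ericksen-stress contributions against the kinematic pieces; reduce $\int\div\tilde\sigma\cdot\u$ via $\p_j\u_i=\A_{ij}+\B_{ij}$ together with the Leslie and Parodi relations; and complete the square in $\N$. The only noticeable difference is organizational: you collect everything into a single functional $\mathcal{R}$ and carry out the $\B\dr$-cancellation in one pass, whereas the paper splits $\tilde\sigma$ into four groups (their displays (2.8)--(2.11)) and assembles the cross terms afterward; both are correct and equivalent. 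Your verification that the $\B\dr$ cross terms drop out and the coefficient of $\N\cdot\A\dr$ collapses to $2\lambda_2$ is indeed the computational crux, and your identification of Parodi's relation $\mu_2+\mu_3=\mu_6-\mu_5$ as the key input there is exactly right.

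One caveat worth flagging: the ``only if'' half of the dissipativity claim is sketched rather than proved, and making it airtight is slightly subtle because the five quadratic pieces are not independent (e.g.\ $|\dr^\top\A\dr|^2\le|\A\dr|^2\le|\nabla\u|^2$ pointwise, so one cannot literally isolate the $\mu_1$-term while forcing the others to vanish). The claim should be read as: the particular sum-of-squares decomposition obtained is manifestly nonnegative precisely under the stated sign conditions. Since the paper itself offers no argument for the ``only if'' direction, your sketch is no less complete than theirs; it would just be worth either making the test-configuration argument explicit or stating the claim in the weaker form above.
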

\begin{proof}

  Multiplying the equation \eqref{compressible-Liquid-Crystal-Model}$_2$ (the second equation of \eqref{compressible-Liquid-Crystal-Model}) by $ \u $ and integrating on the whole space with respect to $x$, one obtains
  \begin{equation*}
    \begin{aligned}
      & \tfrac{1}{2} \tfrac{\d}{\d t} \int_{\mathbb{R}^n} \rho |\u|^2 \d x + \tfrac{1}{2} \mu_4 | \nabla \u |^2_{L^2} + \big{(} \tfrac{1}{2} \mu_4 + \xi \big{)} | \div \u |^2_{L^2} \\
      = & \l a \rho^\gamma, \div \u \r - \l \div ( \nabla \dr \odot \nabla \dr - \tfrac{1}{2} |\nabla \dr |^2 I ), \u \r + \l \div \tilde{\sigma}, \u \r \, .
    \end{aligned}
  \end{equation*}
  Taking $ L^2 $-inner product with $ \dot{\dr} $ in the third equation of \eqref{compressible-Liquid-Crystal-Model}, and making use of the fact that $ \dot{\dr} \cdot \dr = 0 $, we can deduce that
  \begin{equation*}
    \begin{aligned}
      & \tfrac{1}{2} \tfrac{\d}{\d t} \int_{\mathbb{R}^n} \rho | \dot{\dr} |^2 + | \nabla \dr |^2 \d x - \lambda_1 | \dot{\dr} |^2_{L^2} \\
      = & \l \Delta \dr, \u \cdot \nabla \dr \r + \lambda_1 \l \B \dr, \dot{\dr} \r + \lambda_2 \l \A \dr, \dot{\dr} \r + \tfrac{1}{2} \l | \nabla \dr |^2, \div \u \r\, .
    \end{aligned}
  \end{equation*}
  In order to cancel the term $ \l a \rho^\gamma, \div \u \r $, we should multiply the first equation of \eqref{compressible-Liquid-Crystal-Model} with $ \frac{ \gamma p(\rho) }{ ( \gamma-1 ) \rho } $, and then we can obtain
  \begin{equation*}
    \tfrac{a}{\gamma - 1} \tfrac{\d}{\d t}\int_{\mathbb{R}^n} \rho^\gamma \d x + a \l \rho^\gamma, \div u \r = 0\, .
  \end{equation*}
  Simple calculation tells us that
   $$ \l \Delta \dr, \u \cdot \nabla \dr \r - \l \div ( \nabla \dr \odot \nabla \dr - \tfrac{1}{2} |\nabla \dr |^2 I ), \u \r = 0\, . $$
  Hence we know
  \begin{equation}\label{BEL-1}
    \begin{aligned}
      & \tfrac{1}{2} \tfrac{\d}{\d t} \int_{\mathbb{R}^n} \tfrac{2a}{\gamma-1} \rho^\gamma + \rho ( | \u |^2 + | \dot{\dr} |^2 ) + | \nabla \dr |^2 \d x + \tfrac{1}{2} \mu_4 | \nabla \u |^2_{L^2} + \big{(} \tfrac{1}{2} \mu_4 + \xi \big{)} | \div \u |^2_{L^2} \\
      & = \l \div \tilde{\sigma}, \u \r + \lambda_1 \l \B \dr, \dot{\dr} \r + \lambda_2 \l \A \dr, \dot{\dr} \r \, .
    \end{aligned}
  \end{equation}

  Now we will deal with the right-hand side terms of the above equality \eqref{BEL-1}. Based on the observation of the structure of $ \tilde{\sigma} $, we can divide it into four parts as follows:
  \begin{equation*}
    \begin{aligned}
      \tilde{\sigma}_{ji} = & \mu_1 \dr_k \dr_p \A_{kp} \dr_i \dr_j + ( \mu_2 \dr_j \B_{ki} \dr_k + \mu_3 \dr_i \B_{kj} \dr_k ) \\
      & + ( \mu_2 \dr_j \dot{\dr}_i + \mu_3 \dr_i \dot{\dr}_j ) + ( \mu_5 \dr_j \A_{ki} \dr_k + \mu_6 \dr_i \A_{kj} \dr_k ) \, .
    \end{aligned}
  \end{equation*}

  For the first part, by the fact that $ \A_{ij} = \A_{ji} $, $ \B_{ij} = - \B_{ji} $ and $ \p_j \u_i = \A_{ij} + \B_{ij} $, we can easily get
  \begin{equation}\label{BEL-2}
    \begin{aligned}
      \l \partial_j ( \mu_1 \dr_k \dr_p \A_{kp} \dr_i \dr_j  ), \u_i \r =  - \mu_1 \l \dr_k \dr_p \A_{kp} \dr_i \dr_j , \partial_j \u_i \r = - \mu_1 |\dr^\top \A \dr|^2_{L^2}\,.
    \end{aligned}
  \end{equation}

  For the second part, straightforward calculation enable us to get
  \begin{equation}\label{BEL-3}
    \begin{aligned}
      & \l \p_j ( \mu_2 \dr_j \B_{ki} \dr_k + \mu_3 \dr_i \B_{kj} \dr_k ), \u_i \r
      = - \l \mu_2 \dr_j \B_{ki} \dr_k + \mu_3 \dr_i \B_{kj} \dr_k, \A_{ij} + \B_{ij} \r \\
      = & \mu_2 \l \B_{ki} \dr_k, \B_{ji} \dr_j \r - \mu_3 \l \B_{kj} \dr_k, \B_{ij} \dr_i \r
        - \mu_2 \l \B_{ki} \dr_k, \A_{ji} \dr_j \r - \mu_3 \l \B_{kj} \dr_k, \A_{ij} \dr_i \r \\
      = & ( \mu_2 - \mu_3 ) |\B \dr|^2_{L^2} - (\mu_2 + \mu_3) \l \B \dr, \A \dr \r
      = \lambda_1 |\B \dr|^2_{L^2} + \lambda_2 \l \B \dr, \A \dr \r \, .
    \end{aligned}
  \end{equation}
  Here we have used the symmetry of the tensor $ \A $ and the skew symmetry of the tensor $ \B $, and the coefficients relation \eqref{Coefficients-Relations}.

  For the last two parts, by using the same argument as \eqref{BEL-3}, we arrive at
  \begin{equation}\label{BEL-4}
    \begin{aligned}
      & \l \p_j (\mu_2 \dr_j \dot{\dr}_i + \mu_3 \dr_i \dot{\dr}_j), \u_i \r \\
      = & - (\mu_2 + \mu_3) \l \dot{\dr}, \A \dr \r + ( \mu_2 - \mu_3 ) \l \dot{\dr}, \B \dr \r \\
      = & \lambda_2 \l \dot{\dr}, \A \dr \r + \lambda_1 \l \dot{\dr}, \B \dr \r
    \end{aligned}
  \end{equation}
  and
  \begin{equation}\label{BEL-5}
    \begin{aligned}
      & \l \p_j ( \mu_5 \dr_j \A_{kj} \dr_k + \mu_6 \dr_i \A_{kj} \dr_k ), \u_i \r \\
      = & - ( \mu_5 + \mu_6 ) | \A \dr |^2_{L^2} + ( \mu_5 - \mu_6 ) \l \A \dr, \B \dr \r \\
      = & - ( \mu_5 + \mu_6 ) | \A \dr |^2_{L^2} + \lambda_2 \l \A \dr, \B \dr \r\, ,
    \end{aligned}
  \end{equation}
  respectively.

  Therefore, the above equalities \eqref{BEL-2}, \eqref{BEL-3}, \eqref{BEL-4}, \eqref{BEL-5} along with the equality \eqref{BEL-1} imply that
  \begin{equation*}
    \begin{aligned}
      & \tfrac{1}{2} \tfrac{\d}{\d t} \int_{\mathbb{R}^n} \tfrac{2a}{\gamma-1} \rho^\gamma + \rho ( | \u |^2 + | \dot{\dr} |^2 ) + | \nabla \dr |^2 \d x + \tfrac{1}{2} \mu_4 | \nabla \u |^2_{L^2} + \big{(} \tfrac{1}{2} \mu_4 + \xi \big{)} | \div \u |^2_{L^2} \\[2mm]
      & + \mu_1 | \dr^\top \A \dr |^2_{L^2} - \lambda_1 | \dot{\dr} + \B \dr |^2_{L^2} - 2 \lambda_2 \l \dot{\dr} + \B \dr, \A \dr \r + ( \mu_5 + \mu_6 ) | \A \dr |^2_{L^2} = 0 \, .
    \end{aligned}
  \end{equation*}

  Finally, by the square method we can get the basic energy law \eqref{Basic-Ener-Law}, this completes the proof of Proposition \ref{Prop-Basic-Ener-Law}.
\end{proof}

\section{{\em A priori} estimate} \label{Sect-3}

This section is devoted to the {\em a priori} estimate for the compressible Ericksen--Leslie liquid crystal model \eqref{compressible-Liquid-Crystal-Model}-\eqref{Initial-Data}. Besides the energy norm defined in \eqref{energy-norm}, we introduce the following energy dissipation rate:
\begin{align} \label{Dissp-Energy}
 \nonumber D(t) = & \tfrac{1}{2} \mu_4 |\nabla \u|^2_{H^s} + (\tfrac{1}{2} \mu_4 + \xi) |\div \u|^2_{H^s} + (\mu_5 + \mu_6 + \tfrac{\lambda_2^2}{\lambda_1}) \sum_{k=0}^{s} |(\nabla^k \A) \dr|^2_{L^2} \\
  & - \lambda_1 \sum_{k=0}^{s} |\nabla^k \dot{\dr} + (\nabla^k \B) \dr + \tfrac{\lambda_2}{\lambda_1} (\nabla^k \A) \dr|^2_{L^2}+ \mu_1 \sum_{k=0}^{s} |\dr^\top (\nabla^k \A) \dr|^2_{L^2} \, .
\end{align}

The following three lemmas are needed in the derivation of the {\em a priori} estimate. The first is about the bound of the density $\rho$ in terms of the initial data $\rho^{in}$ and $| \div \u |_{L^\infty}$.

\begin{lemma}\label{rho-bound-Lemma}
  Assume that $ \underline{\rho} \leq \rho^{in} (x) \leq \overline{\rho} $ for some constants $\underline{\rho}, \bar{\rho} > 0$, and the density $\rho (x,t)$ satisfies the first equation of \eqref{compressible-Liquid-Crystal-Model}, then the following inequalities hold
  \begin{equation}\label{rho-bound}
    \underline{\rho} \exp\Big\{ - \int_{0}^{t} |\div \u|_{L^\infty}(s) \d s\Big\} \leq \rho(x,t) \leq \overline{\rho} \exp\Big\{ \int_{0}^{t} |\div \u|_{L^\infty} (s) \d s\Big\}\, .
  \end{equation}
\end{lemma}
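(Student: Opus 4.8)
The mass conservation equation $\p_t \rho + \div(\rho\u) = 0$ can be rewritten along particle trajectories. First I would introduce the flow map $X(t;x)$ solving $\tfrac{\d}{\d t}X(t;x) = \u(X(t;x),t)$ with $X(0;x) = x$. Along this characteristic, the material derivative of $\rho$ is $\dot\rho = \p_t\rho + \u\cdot\nabla\rho = -\rho\,\div\u$, since $\div(\rho\u) = \u\cdot\nabla\rho + \rho\,\div\u$. Hence, writing $\tilde\rho(t) := \rho(X(t;x),t)$, we get the linear ODE $\tfrac{\d}{\d t}\tilde\rho(t) = -\tilde\rho(t)\,\div\u(X(t;x),t)$, which integrates explicitly to
\begin{equation*}
  \rho(X(t;x),t) = \rho^{in}(x)\exp\Big\{ -\int_0^t \div\u(X(s;x),s)\,\d s \Big\}\,.
\end{equation*}

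**Deriving the bounds.** Since every point $(y,t)\in\R^N\times[0,t]$ lies on exactly one characteristic (the flow map $x\mapsto X(t;x)$ is a bijection of $\R^N$ for smooth $\u$), evaluating the identity above at the base point $x$ with $X(t;x) = y$ gives
\begin{equation*}
  \rho(y,t) = \rho^{in}(X(0;\cdot)\text{-preimage})\exp\Big\{ -\int_0^t \div\u(X(s;x),s)\,\d s \Big\}\,.
\end{equation*}
Now I bound the exponent crudely: for each $s$,
$\big| \div\u(X(s;x),s) \big| \leq |\div\u|_{L^\infty}(s)$, so
$-\int_0^t |\div\u|_{L^\infty}(s)\,\d s \leq -\int_0^t \div\u(X(s;x),s)\,\d s \leq \int_0^t |\div\u|_{L^\infty}(s)\,\d s$.
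Exponentiating (monotone) and using $\underline{\rho}\leq\rho^{in}\leq\overline{\rho}$ pointwise yields the two-sided estimate \eqref{rho-bound}. In particular $\rho$ stays strictly positive and finite as long as $\int_0^t|\div\u|_{L^\infty}\,\d s<\infty$.

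**Main obstacle.** The analytic content is elementary once characteristics are available, so the only real issue is justifying the characteristic method rigorously: one needs $\u$ to be Lipschitz in $x$ (uniformly on $[0,t]$) for the flow map to be a well-defined bijection, which is guaranteed here since we work with smooth solutions $(\rho,\u,\dr)$ and $s>\tfrac N2+1$ gives $\u\in H^s\hookrightarrow C^1$. An alternative, perhaps cleaner in a compressible-flow context, is to avoid the flow map entirely: multiply the equation by $p\,\rho^{p-1}\,\mathrm{sgn}(\cdots)$-type test functions, or simply note that $\tfrac{\d}{\d t}\log\rho + \u\cdot\nabla\log\rho = -\div\u$ and apply a maximum-principle / Gronwall argument in $L^\infty$ directly to $\log\rho$, bounding $|\p_t\log\rho + \u\cdot\nabla\log\rho|\leq|\div\u|_{L^\infty}$. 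Either route is short; I would present the characteristic version for transparency, and remark that the smoothness assumed on the solution is exactly what makes the flow map well-defined.
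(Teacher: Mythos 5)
Your proof is correct and follows essentially the same route as the paper: rewrite the continuity equation along characteristics (equivalently, for $\log\rho$), integrate the resulting linear ODE explicitly, and bound the exponent by $\int_0^t|\div\u|_{L^\infty}\,\d s$. Your additional remarks on the bijectivity of the flow map and the alternative Gronwall route are sound but not needed beyond what the paper already does.
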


\begin{proof}

  Rewriting the first equation of \eqref{compressible-Liquid-Crystal-Model} as $ \p_t \rho + \u \cdot \nabla{\rho} = - \rho \div{\u} $, then multiplying it by $ \rho^{-1} $, one has
  $$ \p_t \ln{\rho} + \u \cdot \nabla \ln{\rho} = - \div{\u}\, . $$
  By the transport property of the above equation, we will use the characteristic method to derive the bound of $\rho$. Let
  \begin{equation}\label{characteristic}
    \begin{aligned}
      \left\{ \begin{array}{c}
        \frac{\d X(t,x)}{\d t} = \u (t, X(t,x))\, , \\[1.5mm]
        X(0, x) = x \, .
      \end{array}
      \right.
    \end{aligned}
  \end{equation}
  Hence we have $ \tfrac{\d}{\d t} \ln \rho (t, X(t, x)) = -\div \u(t, X(t, x))$, then integrating on $(0, t)$ with respect to $t$ infers that
  $$  \ln \rho (t, X(t, x)) = \ln \rho^{in}(x) - \int_{0}^{t} \div \u (s, X(s, x)) \d s\, .$$
  Therefore $\rho(t, X(t, x)) = \rho^{in} (x) \exp\Big\{- \int_{0}^{t} \div \u(s, X(s, x)) \d s\Big\}$. Then by the assumption $\underline{\rho} \leq \rho^{in} (x) \leq \overline{\rho}$ we obtain the bound \eqref{rho-bound} of the density, this completes the proof.
\end{proof}

The second lemma is about the $L^2$-bound of the derivative of the function $f(\rho)$, which will be play an important role in our calculation.
  \begin{lemma} \label{rho-Hk-Lemma}
    Let $ f(\rho) $ be a smooth function, then for any positive integer $k>0$ and $\rho \in H^k \cap L^\infty $, we have
    \begin{equation}\label{rho-Hk-1}
      \nabla^k f(\rho) = \sum_{ i=1 }^{k} f^{(i)}(\rho) \sum_{\substack{\sum_{l=1}^{i} k_l = k, \\ k_l \geq 1}} \prod_{l=1}^{i} \nabla^{k_l} \rho \, .
    \end{equation}
    In particular, when $\rho$ satisfies the assumption stated in Lemma \ref{rho-bound-Lemma}, and $f(\rho) = \frac{1}{\rho}$, we then have
     \begin{align}\label{rho-H-k-2}
      \no | \nabla^k f(\rho) |_{L^2}
      & \leq \sum_{ i=1 }^{k} \frac{i !}{\underline{\rho}^{i+1}} \exp\Big\{(i+1) \int_{0}^{t} |\div \u|_{L^\infty}(s) \d s\Big\} \sum_{\substack{\sum_{l=1}^{i} k_l = k, \\ k_l \geq 1}} | \prod_{l=1}^{i} \nabla^{k_l} \rho |_{L^2} \\
      \no & \leq C( \underline{\rho}, k ) \exp\Big\{ (k+1) \int_{0}^{t} |\div \u|_{L^\infty} (s) \d s \Big\} \P_k ( |\rho|_{\dot{H}^s} ) \\
      & \leq \Q (\u) \P_k ( |\rho|_{\dot{H}^s} ) \,,
     \end{align}
    where $\Q(\u) = \kappa_1 \exp\Big\{ \kappa_2 \int_{0}^{t} |\div \u|_{L^\infty} (s) \d s \Big\}$ with generic constants $\kappa_1, \kappa_2$, and $ \P_k (x) = \sum_{j=1}^{k} x^j $.
  \end{lemma}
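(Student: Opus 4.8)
The plan is to establish the pointwise higher-order chain rule \eqref{rho-Hk-1} first, and then to read off the quantitative estimate \eqref{rho-H-k-2} from it together with the density bound of Lemma \ref{rho-bound-Lemma}.

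For \eqref{rho-Hk-1} I would argue by induction on $k$. The case $k=1$ is just the chain rule $\nabla f(\rho)=f'(\rho)\nabla\rho$, which is the single term $i=1$, $k_1=1$ on the right-hand side. For the inductive step, apply $\nabla$ to the identity for $k$ and expand by the Leibniz rule: differentiating the prefactor $f^{(i)}(\rho)$ turns an order-$i$ term into an order-$(i+1)$ term by appending a new factor $\nabla\rho$, while differentiating one of the factors $\nabla^{k_m}\rho$ keeps the order $i$ but raises $k_m$ to $k_m+1$; in either case the exponents still sum to $k+1$ and remain $\ge 1$, so collecting terms yields \eqref{rho-Hk-1} with $k$ replaced by $k+1$. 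The only bookkeeping point is that each exponent tuple $(k_1,\dots,k_i)$ is produced with an integer multiplicity (the Fa\`a di Bruno multinomial); these are bounded by a constant depending only on $k$, play no role in what follows, and I would simply absorb them into the generic constants.

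Specializing to $f(\rho)=1/\rho$ one has $f^{(i)}(\rho)=(-1)^i\, i!\,\rho^{-(i+1)}$, hence $|f^{(i)}(\rho)|=i!\,\rho^{-(i+1)}$, and the lower bound in \eqref{rho-bound} gives the uniform pointwise estimate
\[
  \big|f^{(i)}(\rho)(x,t)\big|\ \le\ \frac{i!}{\underline{\rho}^{\,i+1}}\,\exp\Big\{(i+1)\int_{0}^{t}|\div \u|_{L^\infty}(s)\,\d s\Big\}.
\]
Taking $L^2$ norms in \eqref{rho-Hk-1}, using the triangle inequality, and pulling this $L^\infty$ bound out of each summand yields the first inequality in \eqref{rho-H-k-2}. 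To reach the second inequality I would bound, for each tuple with $\sum_{l=1}^{i}k_l=k\le s$ and $k_l\ge 1$, the product $\big|\prod_{l=1}^{i}\nabla^{k_l}\rho\big|_{L^2}$ by H\"older's inequality together with Gagliardo--Nirenberg interpolation --- the standard Moser-type product estimate, legitimate because $1\le k_l\le s$ and $s>\tfrac{N}{2}+1$ --- so that $\big|\prod_{l=1}^{i}\nabla^{k_l}\rho\big|_{L^2}\lesssim|\rho|_{\dot{H}^s}^{\,i}$. Summing over $i=1,\dots,k$ and over the finitely many tuples, bounding $i!\,\underline{\rho}^{-(i+1)}\le C(\underline{\rho},k)$ and $i+1\le k+1$ to factor out a single exponential, and using $\sum_{i=1}^{k}|\rho|_{\dot{H}^s}^{\,i}=\P_k(|\rho|_{\dot{H}^s})$, gives the last two inequalities with $\kappa_2=k+1$ and $\kappa_1$ absorbing the constant from the product estimate.

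I do not anticipate a genuine obstacle here: the statement is an explicit chain rule followed by a routine interpolation. The one point that deserves care --- and the reason for the precise form of the bound --- is to keep the growth in $\int_{0}^{t}|\div \u|_{L^\infty}(s)\,\d s$ completely explicit and packaged as the factor $\Q(\u)$, since this is exactly the structure that will later let these contributions be absorbed over a short time interval in the \emph{a priori} estimates that follow.
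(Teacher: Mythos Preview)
Your proposal is correct and matches the paper's approach: the paper simply states that the lemma ``is easy to prove by the induction'' and omits all details, so your induction for \eqref{rho-Hk-1} followed by the routine H\"older/Sobolev bound for \eqref{rho-H-k-2} is exactly what is intended. Your write-up is in fact more complete than what the paper supplies.
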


  \begin{proof}
    It is easy to prove the lemma by the induction, so we omit it for simplicity.
  \end{proof}

  The next lemma so-called Moser-type inequality formulated by Moser \cite{Moser-ASNSPCS-1966} concerning the estimate of commutator.

  \begin{lemma}[Moser-type inequality] \label{Moser}
    For functions $ f, g \in H^m \cap L^\infty $, $ m \in \mathbb{Z}_+ \cup \{ 0 \}$, and $ | \alpha | \leq m $, we have
    \begin{equation}\label{Moser-type}
      | \nabla^\alpha_x (f g) - f \nabla^\alpha_x g |_{L^2} \leq C ( | \nabla_x f |_{L^\infty} | \nabla^{m-1} g |_{L^2} + | \nabla^m f |_{L^2} | g |_{L^\infty})\, ,
    \end{equation}
    with the constant depends only on $s$.
  \end{lemma}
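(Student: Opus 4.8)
The plan is to prove the commutator bound \eqref{Moser-type} by the classical three-step route: expand the difference with the Leibniz rule, estimate each resulting product by H\"older's inequality with suitably chosen conjugate exponents, and then interpolate each factor by the Gagliardo--Nirenberg inequality, absorbing the result into the claimed right-hand side by Young's inequality. Before starting, note two harmless reductions. If $m=0$ the left-hand side vanishes, and if $m=1$ the left-hand side equals $|\nabla_x f\,g|_{L^2}\le|\nabla_x f|_{L^\infty}|g|_{L^2}$, so we may assume $m\ge 2$. It also suffices to treat $|\alpha|=m$: for $|\alpha|=k<m$ one applies the case $|\alpha'|=k$ and then uses Gagliardo--Nirenberg on $\R^N$ ($N\in\{2,3\}$) to bound $|\nabla^{k-1}g|_{L^2}\lesssim|g|_{L^\infty}+|\nabla^{m-1}g|_{L^2}$ and $|\nabla^{k}f|_{L^2}\lesssim|\nabla f|_{L^\infty}+|\nabla^{m}f|_{L^2}$, which reduces to the top-order case.

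First I would write, by the Leibniz rule,
\[
\nabla^\alpha_x(fg)-f\,\nabla^\alpha_x g=\sum_{\substack{\beta\le\alpha\\ 1\le|\beta|\le m}}\binom{\alpha}{\beta}\,\nabla^\beta_x f\,\nabla^{\alpha-\beta}_x g\,,
\]
a finite sum whose cardinality and binomial coefficients are controlled by a constant depending only on $m$ (hence on $s$). Setting $j=|\beta|$, so that $\nabla^\beta_x f=\nabla^{\beta'}_x(\nabla_x f)$ with $|\beta'|=j-1$ and $|\alpha-\beta|=m-j$, I would bound each term by H\"older's inequality,
\[
|\nabla^\beta_x f\,\nabla^{\alpha-\beta}_x g|_{L^2}\le|\nabla^{j-1}_x(\nabla_x f)|_{L^{p_j}}\,|\nabla^{m-j}_x g|_{L^{q_j}}\,,\qquad \tfrac{1}{p_j}=\tfrac{j-1}{2(m-1)}\,,\quad\tfrac{1}{q_j}=\tfrac{m-j}{2(m-1)}\,,
\]
which indeed satisfy $\tfrac{1}{p_j}+\tfrac{1}{q_j}=\tfrac12$ and lie in $[2,\infty]$ for $1\le j\le m$; the endpoint cases $j=1$ (then $p_1=\infty$, $q_1=2$, and $\nabla f$ is put in $L^\infty$) and $j=m$ (then $p_m=2$, $q_m=\infty$) are read off directly and need no interpolation.

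For the intermediate indices $2\le j\le m-1$ I would apply the Gagliardo--Nirenberg inequality to each factor, namely
\[
|\nabla^{j-1}_x(\nabla_x f)|_{L^{p_j}}\lesssim|\nabla_x f|_{L^\infty}^{\,\frac{m-j}{m-1}}\,|\nabla^{m}_x f|_{L^2}^{\,\frac{j-1}{m-1}}\,,\qquad |\nabla^{m-j}_x g|_{L^{q_j}}\lesssim|g|_{L^\infty}^{\,\frac{j-1}{m-1}}\,|\nabla^{m-1}_x g|_{L^2}^{\,\frac{m-j}{m-1}}\,.
\]
Multiplying and writing $\theta=\tfrac{m-j}{m-1}\in(0,1)$ gives
\[
|\nabla^\beta_x f\,\nabla^{\alpha-\beta}_x g|_{L^2}\lesssim\big(|\nabla_x f|_{L^\infty}|\nabla^{m-1}_x g|_{L^2}\big)^{\theta}\big(|\nabla^{m}_x f|_{L^2}|g|_{L^\infty}\big)^{1-\theta}\le|\nabla_x f|_{L^\infty}|\nabla^{m-1}_x g|_{L^2}+|\nabla^{m}_x f|_{L^2}|g|_{L^\infty}\,,
\]
the last step being Young's inequality $x^\theta y^{1-\theta}\le x+y$; the endpoint terms $j=1,m$ give exactly the two summands on the right already. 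Summing the finitely many $\beta$ then yields \eqref{Moser-type} with constant depending only on $m$ (equivalently $s$). There is no genuine obstacle here; the only points requiring a little care are checking that the H\"older exponents $p_j,q_j$ are admissible for Gagliardo--Nirenberg on $\R^N$ in the relevant range of $m$ and $N$, and treating the endpoint indices $j\in\{1,m\}$ separately so that no interpolation with an $L^\infty$ endpoint is misapplied.
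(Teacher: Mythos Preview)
The paper does not prove this lemma; it is quoted as a known result with attribution to Moser~\cite{Moser-ASNSPCS-1966}, so there is no in-paper argument to compare against. Your Leibniz--H\"older--Gagliardo--Nirenberg--Young route is precisely the classical proof (cf.\ Klainerman--Majda, or the calculus inequalities in~\cite{Majda-2002-BOOK}), and your treatment of the principal case $|\alpha|=m$ is correct, including the endpoint bookkeeping at $j=1$ and $j=m$.

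There is, however, a genuine gap in your reduction of the sub-principal case $|\alpha|=k<m$. The bounds you invoke, $|\nabla^{k-1}g|_{L^2}\lesssim|g|_{L^\infty}+|\nabla^{m-1}g|_{L^2}$ and $|\nabla^{k}f|_{L^2}\lesssim|\nabla f|_{L^\infty}+|\nabla^{m}f|_{L^2}$, are false on $\R^N$: test the first against the dilation $g_\lambda(x)=g(\lambda x)$ and send $\lambda\to 0$ in the regime $2(k-1)<N<2(m-1)$; the left side diverges like $\lambda^{\,k-1-N/2}$ while the right stays bounded. (The Gagliardo--Nirenberg exponent you would need here is $a=\tfrac{2(k-1)-N}{2(m-1)-N}$, which falls outside the admissible range $[\tfrac{k-1}{m-1},1]$ precisely when $k<m$.) In fact the lemma as literally stated---with only the homogeneous top seminorms on the right for \emph{every} $|\alpha|\le m$---is itself imprecise for this reason; the standard formulation either restricts to $|\alpha|=m$ or replaces $|\nabla^{m-1}g|_{L^2}$ by the inhomogeneous norm $|g|_{H^{m-1}}$. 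The correct fix is simply to apply your top-order argument with $m$ replaced by $k$, which yields the commutator bound with $k-1$ and $k$ on the right; this is exactly how the paper uses the lemma in every application (see the estimates of $J_1$--$J_6$ in Section~\ref{Sect-3}), so nothing downstream is affected.
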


The system \eqref{compressible-Liquid-Crystal-Model} can be rewritten as
\begin{equation}\label{compressible-Liquid-Crystal-Model-1}
  \begin{aligned}
    \left\{ \begin{array}{c}
      \p_t \rho + \div(\rho u) = 0\, ,\\[2mm]
      \p_t \u  + \u \cdot \nabla \u  + \tfrac{p'(\rho)}{\rho} \nabla \rho = \tfrac{1}{\rho} \div ( \Sigma_1 + \Sigma_2 + \Sigma_3 )\, ,\\[2mm]
      \ddot{\dr} = \tfrac{1}{\rho} \Delta \dr + \tfrac{1}{\rho} \Gamma \dr + \tfrac{\lambda_1}{\rho} (\dot{\dr} + \B \dr) + \tfrac{\lambda_2}{\rho} \A \dr \, .
    \end{array}\right.
  \end{aligned}
\end{equation}
We now turn to deal with the higher order estimates of system \eqref{compressible-Liquid-Crystal-Model-1} and give the following lemma about the a prior estimate of the system \eqref{compressible-Liquid-Crystal-Model}-\eqref{Initial-Data}:

\begin{lemma}\label{A-Priori-Estimate}
  Let $s>\tfrac{N}{2} +1$, and assume that $(\rho, \u, \dr)$ is a sufficiently smooth solution to the system \eqref{compressible-Liquid-Crystal-Model}-\eqref{Initial-Data} on $[0,T]$. Then there is a constant $C>0$, depending only on Leslie coefficients and $s$, such that for all $t \in [0,T]$, we have
  \begin{align}\label{A-Priori-Estimate-0}
    \tfrac{1}{2} \tfrac{\d}{\d t} E (t) \!+\! D (t) \leq C \Big\{ \Q (\u) \sum_{l=2}^{s+6} E^{\frac{l}{2}} (t) + D^{\frac{1}{2}} (t) E (t) \big[ 1 \!+\! \Q (\u) \sum_{l=0}^{s+3} E^{\frac{l}{2}} (t) \big] \Big\} \, .
  \end{align}
\end{lemma}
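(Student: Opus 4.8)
\textbf{Proof strategy for Lemma \ref{A-Priori-Estimate}.}
The plan is to perform a weighted $H^s$ energy estimate on the rewritten system \eqref{compressible-Liquid-Crystal-Model-1}, mimicking at each order $k$ (for $0 \le k \le s$) the structure of the basic energy law \eqref{Basic-Ener-Law}, so that the leading dissipative terms on the left assemble into $D(t)$ and everything else is a commutator or lower-order nonlinearity that is absorbed into the right-hand side. Concretely, for each multi-index of order $k$ I would apply $\nabla^k$ to the three equations, then pair $\nabla^k$ of the $\u$-equation with $\rho \nabla^k \u$ in $L^2$, pair $\nabla^k$ of the $\ddot{\dr}$-equation with $\rho \nabla^k \dot{\dr}$, and handle the density via $\nabla^k$ of the mass equation paired against the weight $\tfrac{p'(\rho)}{\rho}\nabla^k\rho$ so as to cancel the $\nabla^k$-analogue of the pressure work term $\langle a\rho^\gamma,\div\u\rangle$. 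The $\rho$-weights are exactly why the energy $E(t)$ in \eqref{energy-norm} carries the factors $\rho$ and $\tfrac{p'(\rho)}{\rho}$; when $\nabla^k$ hits the $\tfrac1\rho$ factors in front of $\div(\Sigma_i)$, $\Delta\dr$, $\Gamma\dr$, etc., one uses Lemma \ref{rho-Hk-Lemma} to convert those into polynomial expressions in $|\rho|_{\dot H^s}$ times the growth factor $\Q(\u)$, which is the origin of the $\Q(\u)$ prefactors and the high polynomial degrees $\sum_{l=2}^{s+6}E^{l/2}$ and $\sum_{l=0}^{s+3}E^{l/2}$ on the right.

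The key technical steps, in order: (i) rewrite the system as \eqref{compressible-Liquid-Crystal-Model-1} and fix the test functions as above; (ii) for the velocity estimate, move $\nabla^k$ through $\tfrac1\rho\div\Sigma_1$ and integrate by parts so the principal part reproduces $\tfrac12\mu_4|\nabla^{k+1}\u|_{L^2}^2+(\tfrac12\mu_4+\xi)|\nabla^k\div\u|_{L^2}^2$, with the error from $[\nabla^k,\tfrac1\rho]$ and from differentiating $\rho$ in the weight controlled by Lemma \ref{Moser} and Lemma \ref{rho-Hk-Lemma}; (iii) for the director estimate, reproduce from $\tfrac1\rho(\Delta\dr+\Gamma\dr+\lambda_1(\dot\dr+\B\dr)+\lambda_2\A\dr)$ the terms $\tfrac12\tfrac{\d}{\d t}(|\nabla^{k+1}\dr|_{L^2}^2)$, $-\lambda_1|\nabla^k\dot\dr+(\nabla^k\B)\dr+\tfrac{\lambda_2}{\lambda_1}(\nabla^k\A)\dr|_{L^2}^2$, $(\mu_5+\mu_6+\tfrac{\lambda_2^2}{\lambda_1})|(\nabla^k\A)\dr|_{L^2}^2$ and $\mu_1|\dr^\top(\nabla^k\A)\dr|_{L^2}^2$ via the same "square method" used in Proposition \ref{Prop-Basic-Ener-Law}, using the constraint $|\dr|=1$ (hence $\dot\dr\cdot\dr=0$) to kill the $\Gamma\dr$ pairing up to commutators; (iv) for the coupling, note the exact cancellation $\langle\nabla^k\Delta\dr,\nabla^k(\u\cdot\nabla\dr)\rangle-\langle\nabla^k\div\Sigma_2,\nabla^k\u\rangle$ modulo commutators, just as the zeroth-order identity in Proposition \ref{Prop-Basic-Ener-Law}; (v) collect the cross terms coming from $\div\Sigma_3=\div\tilde\sigma$ against $\nabla^k\u$ and the $\B\dr,\A\dr$ terms against $\nabla^k\dot\dr$, again reproducing the Leslie-coefficient quadratic form of \eqref{Basic-Ener-Law} at order $k$; (vi) estimate every remaining term — products of the form (derivatives of $\dr,\nabla\dr,\dot\dr,\u$) $\times$ (derivatives of $\tfrac1\rho$) — by Sobolev embedding $H^s\hookrightarrow L^\infty$ (valid since $s>\tfrac N2+1$), the Moser inequality, and Lemma \ref{rho-Hk-Lemma}, pulling out one factor of $D^{1/2}(t)$ whenever a top-order dissipative quantity ($\nabla^{s+1}\u$, $(\nabla^s\A)\dr$, etc.) appears and bounding the rest by powers of $E^{1/2}(t)$ and $\Q(\u)$; finally sum over $0\le k\le s$.

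The main obstacle I expect is bookkeeping the $\tfrac1\rho$-weights consistently: because the energy is $\rho$-weighted but the equations carry $\tfrac1\rho$ coefficients, every integration by parts produces extra terms where a derivative lands on $\rho$ (or on $\tfrac1\rho$), and one must show these are genuinely lower order. The device is Lemma \ref{rho-Hk-Lemma}, which bounds $|\nabla^k(\tfrac1\rho)|_{L^2}$ by $\Q(\u)\P_k(|\rho|_{\dot H^s})$; since $\rho\le\bar\rho e^{\int|\div\u|_{L^\infty}}$ and $\rho^{-1}\le\underline\rho^{-1}e^{\int|\div\u|_{L^\infty}}$ from Lemma \ref{rho-bound-Lemma}, and $|\div\u|_{L^\infty}\lesssim|\nabla\u|_{H^s}\lesssim D^{1/2}(t)$ is itself dissipative, these growth factors are exactly the $\Q(\u)$ appearing in \eqref{A-Priori-Estimate-0}, and the highest homogeneity $s+6$ arises from pairing a degree-$s$ polynomial in $|\rho|_{\dot H^s}$ with quadratic top-order quantities. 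A secondary subtlety is the director estimate: one must keep $|\dr|=1$ available when completing the square in the Leslie quadratic form, which is legitimate here because (as explained in the introduction) the constraint propagates from the initial data; the $\Gamma\dr$ and $\tilde\Gamma\dr$ terms then only survive through commutators $[\nabla^k,\tfrac\Gamma\rho]\dr$, estimated by Moser together with the definition \eqref{Lagrangian} of $\Gamma$ as a quadratic expression in $\dot\dr,\nabla\dr,\A\dr$. I would not belabor the individual product estimates — they are routine applications of Moser and Sobolev — but I would be careful to track which single factor of $D^{1/2}(t)$ is extracted in each term so that Young's inequality later (in the sections that invoke this lemma) can absorb it.
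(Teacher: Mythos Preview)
Your proposal is correct and follows essentially the same approach as the paper: apply $\nabla^k$ to each equation of \eqref{compressible-Liquid-Crystal-Model-1}, pair with the weighted test functions $\tfrac{p'(\rho)}{\rho}\nabla^k\rho$, $\rho\nabla^k\u$, $\rho\nabla^k\dot{\dr}$, exploit the cancellations and the square-method dissipation structure of Proposition~\ref{Prop-Basic-Ener-Law} at each order (together with the constraint $\dr\cdot\dot{\dr}=0$ to handle the $\Gamma\dr$ term), and estimate all commutators via Lemma~\ref{Moser}, Lemma~\ref{rho-Hk-Lemma}, and Sobolev embedding. The paper organizes the remainder terms into two groups $I$ (structured terms with partial cancellation) and $J$ (pure commutators), but the content of the estimates is exactly what you describe.
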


\begin{proof}
For all $ 1 \leq k \leq s \ (s \geq \tfrac{N}{2} +1) $, firstly, acting $\nabla^k$ on the first equation of \eqref{compressible-Liquid-Crystal-Model-1} and taking $L^2$-inner product with $\frac{p'(\rho)}{\rho} \nabla^k \rho$ yield
  \begin{align}\label{rho-k}
    \no & \tfrac{1}{2} \tfrac{\d}{\d t} \int_{\mathbb{R}^N} \tfrac{p'(\rho)}{\rho} | \nabla^k \rho|^2 \d x +\l p'(\rho) \nabla^k \div \u, \nabla^k \rho \r + \l \u \cdot \nabla^{k+1} \rho, \tfrac{p'(\rho)}{\rho} \nabla^k \rho \r \\
    = & \tfrac{1}{2} \int_{\mathbb{R}^N} \p_t \big{(} \tfrac{p'(\rho)}{\rho}\big{)} | \nabla^k \rho |^2 \d x - \l [\nabla^k, \rho \div] \u, \tfrac{p'(\rho)}{\rho} \nabla^k \rho \r - \l [\nabla^k, \u \cdot \nabla] \rho, \tfrac{p'(\rho)}{\rho} \nabla^k \rho \r \, .
  \end{align}

Secondly, applying the multi-derivative operator $\nabla^k$ to the second equation \eqref{compressible-Liquid-Crystal-Model-1}, and taking $L^2$-inner product with $\rho \nabla^k \u$ enable us to derive the following equality:
\begin{align}\label{u-k}
    \no & \tfrac{1}{2} \tfrac{\d}{\d t} \int_{\mathbb{R}^N} \rho |\nabla^k \u|^2 \d x + \tfrac{1}{2} \mu_4 |\nabla^{k+1} \u|^2_{L^2} + ( \tfrac{1}{2} \mu_4 + \xi ) |\nabla \div \u|^2_{L^2} + \l p'(\rho) \nabla^{k+1} \rho, \nabla^k \u \r \\
    \no = & - \l [\nabla^k, \u \cdot \nabla] \u, \rho \nabla^k \u \r - \l [\nabla^k, \tfrac{p'(\rho)}{\rho} \nabla] \rho, \rho \nabla^k \u \r + \l [\nabla^k, \tfrac 1 \rho \div] \Sigma_1, \rho \nabla^k \u \r
    \\
    \no & + \l [\nabla^k, \tfrac{1}{\rho} \div] \Sigma_2, \rho \nabla^k \u \r + \l [\nabla^k, \tfrac{1}{\rho} \div] \Sigma_3, \rho \nabla^k \u \r + \l \nabla^k \div \Sigma_2, \nabla^k \u \r \\
    & + \l \nabla^k \div \Sigma_3, \nabla^k \u \r \, .
\end{align}

Thirdly, via acting $\nabla^k$ on \eqref{compressible-Liquid-Crystal-Model-1}$_3$, then multiplying by $\rho \nabla^k \dot{\dr}$ and integrating the resulting identity on $\mathbb{R}^N$ with respect to $x$, one obtains
  \begin{align}\label{d-k}
    \no & \tfrac{1}{2} \tfrac{\d}{\d t} \int_{\mathbb{R}^N} \rho |\nabla^k \dot{\dr}|^2 + |\nabla^{k+1} \dr|^2 \d x - \l \nabla^k \Delta \dr, \nabla^k (\u \cdot \nabla \dr) \r \\
    \no = & \l \nabla^k (\Gamma \dr), \nabla^k \dot{\dr} \r + \l \lambda_1 \nabla^k (\B \dr) + \lambda_2 \nabla ^k(\A \dr), \nabla^k \dot{\dr} \r + \l [\nabla^k, \tfrac{1}{\rho} \Delta] \dr, \rho \nabla^k \dot{\dr} \r \\
    \no & - \l [\nabla^k, \u \cdot \nabla] \dot{\dr}, \rho \nabla^k \dot{\dr} \r + \sum_{\substack{a+b=k, \\ a \geq 1}} \l \nabla^a ( \tfrac{1}{\rho} ) \nabla^b ( \lambda_1 \B \dr + \lambda_2 \A \dr), \rho \nabla^k \dot{\dr} ) \r \\
    & + \sum_{\substack{a+b=k, \\ a \geq 1}} \l \lambda_1 \nabla^a (\tfrac{1}{\rho}) \nabla^b \dot{\dr}, \rho \nabla^k \dot{\dr} \r + \sum_{\substack{a+b=k, \\ a \geq 1}} \l \nabla^a (\tfrac{1}{\rho}) \nabla^b (\Gamma \dr), \rho \nabla^k \dot{\dr} \r \, .
  \end{align}

Therefore, combining the equalities \eqref{rho-k}, \eqref{u-k} and \eqref{d-k} leads to
\begin{equation}\label{APE-1}
  \begin{aligned}
    & \tfrac{1}{2} \tfrac{\d}{\d t} \int_{\mathbb{R}^N} \tfrac{p'(\rho)}{\rho} | \nabla^k \rho |^2 + \rho ( |\nabla^k \u|^2 + |\nabla^k \dot{\dr}|^2 ) + |\nabla^{k+1} \dr|^2 \d x \\[2mm]
    & \quad \quad + \tfrac{1}{2} \mu_4 |\nabla^{k+1} \u|^2_{L^2} + ( \tfrac{1}{2} \mu_4 + \xi ) |\nabla^k \div \u|^2_{L^2} \\[2mm]
    & = I + J \, ,
  \end{aligned}
\end{equation}
where
  \begin{align*}
    I = & \underbrace{- \l p'(\rho) \nabla^{k+1} \rho, \nabla^k \u \r - \l p'(\rho) \nabla^k \div \u, \nabla^k \rho \r}_{I_1} - \underbrace{ \l \u \cdot \nabla^{k+1} \rho, \tfrac{p'(\rho)}{\rho} \nabla^k \rho \r }_{I_2} \\
    & + \underbrace{ \l \nabla^k \Delta \dr, \nabla^k (\u \cdot \nabla \dr) \r - \l \nabla^k \div \Sigma_2, \nabla^k \u \r}_{I_3} + \underbrace{\l \nabla^k ( \Gamma \dr ), \nabla^k \dot{\dr} \r}_{I_4} \\
    & + \underbrace{\l \lambda_1 \nabla^k ( \dot{\dr} + \B \dr ) + \lambda_2 \nabla^k ( \A \dr ), \nabla^k \dot{\dr} \r + \l \nabla^k \tilde{\sigma}, \nabla^k \u \r}_{I_5}
  \end{align*}
and
  \begin{align*}
    J = & - \l [\nabla^k, \u \cdot \nabla] \u, \rho \nabla^k \u \r - \l [\nabla^k, \tfrac{p'(\rho)}{\rho} \nabla] \rho, \rho \nabla^k \u \r - \l [\nabla^k, \u \cdot \nabla] \dot{\dr}, \rho \nabla^k \dot{\dr} \r \\[1.5mm]
    & + \l [\nabla^k, \tfrac{1}{\rho} \Delta] \dr, \rho \nabla^k \dot{\dr} \r - \l [\nabla^k, \u \cdot \nabla] \rho, \tfrac{p'(\rho)}{\rho} \nabla^k \rho \r - \l [\nabla^k, \rho \div] \u, \tfrac{p'(\rho)}{\rho} \nabla^k \rho \r \\[1.5mm]
    & + \tfrac{1}{2} \int_{\mathbb{R}^N} \p_t \big( \tfrac{p'(\rho)}{\rho} \big) |\nabla^k \rho|^2 \d x + \l [\nabla^k, \tfrac 1 \rho \div] \Sigma_1, \rho \nabla^k u \r + \l [\nabla^k, \tfrac 1 \rho \div] \Sigma_2, \rho \nabla^k u \r \\[1.5mm]
    & + \l [\nabla^k, \tfrac 1 \rho \div] \Sigma_3, \rho \nabla^k u \r + \sum_{\substack{a+b=k,\\ a \geq 1}} \l \nabla^a (\tfrac{1}{\rho}) \nabla^b (\Gamma \dr), \rho \nabla^k \dot{\dr} \r \\[1.5mm]
    & + \sum_{\substack{a+b=k,\\ a \geq 1}} \l \lambda_1 \nabla^a (\tfrac{1}{\rho}) \nabla^b \dot{\dr}, \rho \nabla^k \dot{\dr} \r + \sum_{\substack{a+b=k,\\ a \geq 1}} \l \nabla^a (\tfrac{1}{\rho}) \nabla^b (\lambda_1 \B \dr + \lambda_2 \A \dr), \rho \nabla^k \dot{\dr} \r \\[1.5mm]
    \equiv & \sum_{l=1}^{13} J_{l} \, .
  \end{align*}

We now turn to deal with $I_i(1 \leq i \leq 5)$ term by term. It is obvious that the terms in $I_1$ have a cancellation relation, so we can get by employing the H\"older inequality and Sobolev embedding theory that
  \begin{align}\label{APE-I-1}
    \no I_1 = & - \l p'(\rho) \nabla^{k+1} \rho - \nabla(p'(\rho) \nabla^k \rho), \nabla^k \u \r
     = \l p''(\rho) \nabla \rho \nabla^k \rho, \nabla^k \u \r \\
    \lesssim & Q(\u) |\nabla \rho|_{L^\infty} |\nabla^k \rho|_{L^2} |\nabla^k \u|_{L^2}
    \lesssim Q(\u) |\rho|_{\dot{H}^s}^2 |\u|_{\dot{H}^s}\, .
  \end{align}

As to $I_2$, it is easy to deduce that
  \begin{align}\label{APE-I-2}
    \no I_2 = & \l \u_l \p_l \nabla^k \rho, \tfrac{p'(\rho)}{\rho} \nabla^k \rho \r
            = -\tfrac{1}{2} \l \p_l \u_l \tfrac{p'(\rho)}{\rho} + \u_l \p_l (\tfrac{p'(\rho)}{\rho}), |\nabla^k \rho|^2 \r \\
    \no    \lesssim & Q(\u) (|\div \u|_{L^\infty} + |\u|_{L^\infty} |\nabla \rho|_{L^\infty}) |\nabla^k \rho|^2_{L^2} \\
          \lesssim & Q(\u) |\rho|^2_{\dot{H}^s} (1 + |\rho|_{\dot{H}^s}) |\u|_{H^s} \, .
  \end{align}

For the third term $I_3$, we also want to find some cancellation to reduce the order of the derivation. Based on such idea, we can infer that
  \begin{align}\label{APE-I-3}
    \no I_3 = & - \l \nabla^k \p_j \dr_p, \p_j \u_i \nabla^k \p_i \dr_p \r + \tfrac 1 2 \l \p_i \u_i, |\nabla^{k+1} \dr|^2 \r - \l \Delta \dr \nabla^k \nabla \dr, \nabla^k \u \r \\
    \no & - \sum_{\substack{a+b=k,\\ 1 \leq a \leq k-1}} \l \nabla^{a+2} \dr \nabla^b \u + \nabla^{a+1} \dr \nabla^{b+1} \u, \nabla^k \nabla \dr \r - \sum_{\substack{a+b=k,\\ 1 \leq a \leq k-1}} \l \nabla^a \Delta \dr \nabla^b \nabla \dr, \nabla^k \u \r \\
    \no \lesssim & |\nabla^{k+1} \dr|^2_{L^2}|\nabla \u|_{L^\infty} + \sum_{\substack{a+b=k,\\ 1 \leq a \leq k-1}} |\nabla^{k+1} \dr|_{L^2} ( |\nabla^a \Delta \dr|_{L^2} |\nabla^b \u|_{L^\infty} + |\nabla^{a+1} \dr|_{L^4} |\nabla^{b+1} \u|_{L^4} ) \\
    \no & + |\nabla^{k+1} \dr|_{L^2} |\Delta \dr|_{L^4} |\nabla^k \u|_{L^4} + \sum_{\substack{a+b=k,\\ 1 \leq a \leq k-1}} |\nabla^a \Delta \dr|_{L^2} |\nabla^b \nabla \dr|_{L^4} |\nabla^k \u|_{L^4} \\
    \lesssim & |\nabla \dr|^2_{\dot{H^s}} |\nabla \u|_{H^s} \, ,
  \end{align}
by using the H\"older inequality and Sobolev embedding theory.

We now turn to estimate $I_4$. To do so, we first have
\begin{align*}
  I_4 = \l \nabla^k \Gamma, \dr \cdot \nabla^k \dot{\dr} \r + \sum_{\substack{a+b=k, \\ b \geq 1}} \l \nabla^a \Gamma \nabla^b \dr, \nabla^k \dot{\dr} \r \, .
\end{align*}
Then, keeping the structure of the Lagrangian $\Gamma$ in mind, we know that the estimate of the second terms on the right-hand side of the forgoing equality can be divided into three parts, for the first part
\begin{align*}\label{APE-I-41}
  & \sum_{\substack{a+b=k, \\ b \geq 1}} \l \nabla^a (-\rho |\dot{\dr}|^2) \nabla^b \dr, \nabla^k \dot{\dr} \r \leq \sum_{\substack{a+b+c+e = k, \\ e \geq 1}} \l |\nabla^a \rho| |\nabla^b \dot{\dr}| |\nabla^c \dot{\dr}| |\nabla^e \dr|, \nabla^k \dot{\dr} \r \\
  \lesssim & |\rho|_{L^\infty} \sum_{\substack{a+b+c=k, \\ c\geq 1}} \l |\nabla^a \dot{\dr}| |\nabla^b \dot{\dr}| |\nabla^c\dr|, |\nabla^k \dot{\dr}| \r + \sum_{\substack{a+b+c+e=k, \\ a, e\geq 1}} \l |\nabla^a \rho| |\nabla^b \dot{\dr}| |\nabla^c \dot{\dr}| |\nabla^e \dr|, |\nabla^k \dot{\dr}| \r \\
  \lesssim & |\rho|_{L^\infty} |\dot{\dr}|_{L^\infty} \!\sum_{\substack{a+b=k, \\ b \geq 1}} \!|\nabla^a \dot{\dr}|_{L^4} |\nabla^b \dr|_{L^4} |\nabla^k \dot{\dr}|_{L^2} \! + \! |\rho|_{L^\infty} \!\sum_{\substack{a+b+c=k, \\ a, b, c\geq 1}} \!|\nabla^a \dot{\dr}|_{L^\infty} |\nabla^b \dot{\dr}|_{L^4} |\nabla^c \dr|_{L^4} |\nabla^k \dot{\dr}|_{L^2} \\
  & + \sum_{\substack{a+b+c+e=k, \\ a, e\geq 1}} |\nabla^a \rho|_{L^4} |\nabla^b \dot{\dr}|_{L^\infty} |\nabla^c \dot{\dr}|_{L^\infty} |\nabla^e \dr|_{L^4} |\nabla^k \dot{\dr}|_{L^2}\\
  \lesssim & ( |\rho|_{L^\infty} + |\rho|_{\dot{H}^s} ) |\nabla \dr|_{H^s} |\dot{\dr}|^3_{H^s} \, .
\end{align*}
Similar as the above estimates, we can infer that
\begin{align*}
  & \sum_{\substack{a+b=k, \\ b \geq 1}} \l \nabla^a (|\nabla \dr|^2) \nabla^b \dr, \nabla^k \dot{\dr} \r \\
   = & - \l \div (|\nabla \dr|^2 \nabla^k \dr), \nabla^{k-1} \dot{\dr} \r + \sum_{\substack{a+b+c=k, \\ 1 \leq c \leq k-1}} \l \nabla^{a+1} \dr \nabla^{b+1} \dr \nabla^c \dr, \nabla^k \dot{\dr} \r \\
  \lesssim & |\nabla \dr|^2_{H^s} |\nabla \dr|_{\dot{H}^s} |\dot{\dr}|_{H^s}
\end{align*}
for the second part. Finally, by the same arguments in the above estimates one can deduce that
\begin{align*}
  & \sum_{\substack{a+b=k, \\ b \geq 1}} \l -\lambda_2 \nabla^a (\dr^\top \A \dr) \nabla^b \dr, \nabla^k \dot{\dr} \r \\
  \lesssim & |\lambda_2| \sum_{\substack{a+b+c=k, \\ c \geq 1}} \sum_{\substack{a_1+a_2=a, \\ a_1, a_2 \geq 1 }} \l |\nabla^{a_1} \dr| |\nabla^{a_2} \dr| |\nabla^{b+1} \u| |\nabla^c \dr|, |\nabla^k \dot{\dr}| \r \\
  & + 2 |\lambda_2| \sum_{\substack{a+b+c=k, \\ c \geq 1}} \l |\nabla^{a} \dr| |\nabla^{b+1} \u| |\nabla^c \dr|, |\nabla^k \dot{\dr}| \r + |\lambda_2| \sum_{\substack{a+b=k, \\ b \geq 1}} \l |\nabla^{a+1} \u| |\nabla^b \dr|, |\nabla^k \dot{\dr}| \r \\
  \lesssim & |\lambda_2| |\dot{\dr}|_{H^s} |\nabla \u|_{H^s} \sum_{l=1}^{3} |\nabla \dr|^l_{H^s} \, .
\end{align*}

With the above three estimates in hand, one has
\begin{align*}
  & \sum_{\substack{a+b=k, \\ b \geq 1}} \l \nabla^a \Gamma \nabla^b \dr, \nabla^k \dot{\dr} \r \\
  \lesssim & ( |\rho|_{L^\infty} + |\rho|_{\dot{H}^s} ) |\nabla \dr|_{H^s} |\dot{\dr}|^3_{H^s} + |\nabla \dr|^2_{H^s} |\nabla \dr|_{\dot{H}^s} |\dot{\dr}|_{H^s} + |\lambda_2| |\dot{\dr}|_{H^s} |\nabla \u|_{H^s} \sum_{l=1}^{3} |\nabla \dr|^l_{H^s} \, .
\end{align*}

Next we will control the term $\l \nabla^k \Gamma, \dr \cdot \nabla^k \dot{\dr} \r$. Noticing the fact that $\dr \cdot \dot{\dr} = 0$, one obtains
\begin{align*}
  \dr \cdot \nabla^k \dot{\dr} = \nabla^k( \dr \cdot \dot{\dr}) - \sum_{\substack{a+b=k,\\ a \geq 1}} \nabla^a \dr \nabla^b \dot{\dr} = - \sum_{\substack{a+b=k,\\ a \geq 1}} \nabla^a \dr \nabla^b \dot{\dr} \, .
\end{align*}
Then
\begin{align*}
  |\dr \cdot \nabla^k \dot{\dr}|_{L^2} \lesssim \sum_{\substack{a+b=k,\\ a \geq 1}} |\nabla^a \dr|_{L^4} |\nabla^b \dot{\dr}|_{L^4} \lesssim |\dot{\dr}|_{H^s} |\nabla \dr|_{H^s}\, .
\end{align*}
In addition,
  \begin{align*}
    |\nabla^k \Gamma|_{L^2} \lesssim & \sum_{a+b+c=k} |\nabla^a \rho \nabla^b \dot{\dr} \nabla^c \dot{\dr}|_{L^2} + \sum_{a+b=k} |\nabla^{a+1} \dr \nabla^{b+1} \dr|_{L^2} \\
    & + |\lambda_2| \sum_{a+b+c=k} |\nabla^a \dr^\top \nabla^{b+1} \u \nabla^c \dr|_{L^2} \\
    \lesssim & ( |\rho|_{L^\infty} + |\rho|_{\dot{H}^s} ) |\dot{\dr}|^2_{H^s} + |\nabla \dr|_{H^s} |\nabla \dr|_{\dot{H}^s} + |\lambda_2| \sum_{l=0}^{2} |\nabla \dr|^l_{H^s} |\nabla \u|_{H^s}\, .
  \end{align*}
Therefore, we have the estimate of $I_4$ as
\begin{equation}\label{APE-I-4}
  I_4 \lesssim ( |\rho|_{L^\infty} \!+\! |\rho|_{\dot{H}^s} ) |\nabla \dr|_{H^s} |\dot{\dr}|^3_{H^s} \!+\! |\nabla \dr|^2_{H^s} |\nabla \dr|_{\dot{H}^s} |\dot{\dr}|_{H^s} \!+\! |\lambda_2| \!\sum_{l=1}^{3}\! |\nabla \dr|^l_{H^s} |\dot{\dr}|_{H^s} |\nabla \u|_{H^s} \, .
\end{equation}

For the terms of $I_5$, we anticipate that they can give us some dissipation similar as the derivation of the basic energy law. Actually, $I_5$ can be rewritten as the endpoint derivative terms part $I^e_5$ and the intermediate terms part $I^m_5$, and the former can give us some dissipation if the coefficients satisfy some relations. That is
\begin{align*}
  I_5 = I^e_5 + I^m_5 \, ,
\end{align*}
where
  \begin{equation*}
    \begin{aligned}
      I_5^{e} & = - \mu_1 \l  \dr_p \dr_q \nabla^k \A_{pq} \dr_i \dr_j, \nabla^k \partial_j \u_i \r - \l \mu_2 \dr_j \nabla^k \dot{\dr}_i + \mu_3 \dr_i \nabla^k \dot{\dr}_j, \nabla^k \partial_j \u_i \r \\
      & - \l \mu_2 \dr_j \nabla^k \B_{pi} \dr_p + \mu_3 \dr_i \nabla^k \B_{pj} \dr_p , \nabla^k \partial_j \u_i \r - \l \mu_5 \dr_j \dr_p \nabla^k \A_{pi} + \mu_6 \dr_i \dr_p \nabla^k \A_{pj}, \nabla^k \partial_j \u_i \r \\
      & + \lambda_1 |\nabla^k \dot{\dr}|^2_{L^2} + \lambda_1 \l (\nabla^k \B ) \dr, \nabla^k \dot{\dr} \r + \lambda_2 \l (\nabla^k \A ) \dr, \nabla^k \dot{\dr} \r
    \end{aligned}
  \end{equation*}
and
  \begin{align*}
    I^m_5 = & - \mu_1 \sum_{\substack{a+b=k,\\ a \geq 1}} \l \nabla^a ( \dr_i \dr_j \dr_p \dr_q ) \nabla^b \A_{pq}, \nabla^k \p_j \u_i \r \\
    & - \sum_{\substack{a+b=k,\\ a \geq 1}} \l \mu_2 \nabla^a \dr_j \nabla^b \dot{\dr}_i + \mu_3 \nabla^a \dr_i \nabla^b \dot{\dr}_j, \nabla^k \p_j \u_i \r \\
    & - \sum_{\substack{a+b=k,\\ a \geq 1}} \l \mu_2 \nabla^a (\dr_j \dr_p) \nabla^b \B_{pi} + \mu_3 \nabla^a (\dr_i \dr_p) \nabla^b \B_{pj}, \nabla^k \p_j \u_i \r \\
    & - \sum_{\substack{a+b=k,\\ a \geq 1}} \l \mu_5 \nabla^a (\dr_j \dr_p) \nabla^b \A_{pi} + \mu_6 \nabla^a (\dr_i \dr_p) \nabla^b \A_{pj}, \nabla^k \p_j \u_i \r \\
    & + \sum_{\substack{a+b=k,\\ b \geq 1}} \l \lambda_1 \nabla^a \B \nabla^b \dr + \lambda_2 \nabla^a \A \nabla^b \dr, \nabla^k \dot{\dr} \r \\
    \equiv & I^m_{51} + I^m_{52} + I^m_{53} + I^m_{54} + I^m_{55} \, .
  \end{align*}

Similar as the derivation of the basic energy law, one can get the dissipation as
  \begin{align*}
    I^e_5 = - \mu_1 |\dr^\top (\nabla^k \A) \dr|^2_{L^2} \!+\! \lambda_1 |\nabla^k \dot{\dr} \!+\! (\nabla^k \B) \dr \!+\! \tfrac{\lambda_2}{\lambda_1} (\nabla^k \A) \dr |^2_{L^2} \!-\! (\mu_5 \!+\! \mu_6 \!+\! \tfrac{\lambda_2^2}{\lambda_1}) |(\nabla^k \A) \dr|^2_{L^2} \, .
  \end{align*}

Next we will estimate the intermediate terms one by one, which should be controlled by the free energy or/and dissipation energy. Thanks to H\"older inequality, Sobolev embedding theory and $|\dr| = 1$, one arrives at
  \begin{align*}
    I^m_{51} & \lesssim \mu_1 \sum_{\substack{a+b=k,\\ a \geq 1}} \sum_{\substack{a_1 + a_2 +a_3 + a_4 = a, \\ a_1, a_2, a_3, a_4 \geq 1}} \l |\nabla^{a_1} \dr| |\nabla^{a_2} \dr| |\nabla^{a_3} \dr| |\nabla^{a_4} \dr| |\nabla^{b+1} \u|, |\nabla^{k+1} \u| \r \\
    & + \mu_1 \sum_{\substack{a+b=k, \\ a \geq 1}} \sum_{\substack{a_1 +a_2 + a_3 = a, \\ a_1, a_2, a_3 \geq 1}} \l |\nabla^{a_1} \dr| |\nabla^{a_2} \dr| |\nabla^{a_3} \dr| |\nabla^{b+1} \u|, |\nabla^{k+1} \u| \r \\
    & + \mu_1 \! \sum_{\substack{a+b=k, \\ a \geq 1}} \sum_{\substack{a_1 +a_2 = a, \\ a_1, a_2 \geq 1}} \! \l |\nabla^{a_1} \dr| |\nabla^{a_2} \dr| |\nabla^{b+1} \u|, \nabla^{k+1} \u \r + \mu_1 \! \sum_{\substack{a+b=k, \\ a \geq 1}} \! \l |\nabla^{a} \dr| |\nabla^{b+1} \u|, |\nabla^{k+1} \u| \r \\
    \lesssim & \mu_1 |\u|_{\dot{H}^s} |\nabla \u|_{\dot{H}^s} \sum_{l=1}^{4} |\nabla \dr|^l_{H^s}
  \end{align*}
and
\begin{equation*}\label{APE-I-m-2}
  \begin{aligned}
    I^m_{52} \lesssim & (|\mu_2| + |\mu_3|) \sum_{\substack{a+b=k, \\ a \geq 1}}  |\nabla^a \dr|_{L^4} |\nabla^b \dot{\dr}|_{L^4} |\nabla^{k+1} \u|_{L^2} \\
    \lesssim & (|\mu_2| + |\mu_3|) |\nabla \u|_{\dot{H}^s} |\nabla \dr|_{H^s} |\dot{\dr}|_{H^s} \, ,
  \end{aligned}
\end{equation*}
and for the control of $I^m_{53}$ and $I^m_{54}$, using the same calculation of $I^m_{51}$ to yield
\begin{equation*}\label{APE-I-m-3}
  \begin{aligned}
    I^m_{53} \lesssim (|\mu_2| + |\mu_3|) |\u|_{\dot{H}^s} |\nabla \u|_{\dot{H}^s} (|\nabla \dr|_{H^s} + |\nabla \dr|^2_{H^s})
  \end{aligned}
\end{equation*}
and
\begin{equation*}\label{APE-I-m-4}
  \begin{aligned}
    I^m_{54} \lesssim (|\mu_5| + |\mu_6|) |\u|_{\dot{H}^s} |\nabla \u|_{\dot{H}^s} (|\nabla \dr|_{H^s} + |\nabla \dr|^2_{H^s}) \, .
  \end{aligned}
\end{equation*}
Meanwhile, it is easy to get
\begin{equation*}\label{APE-I-m-5}
  \begin{aligned}
    I^m_{55} \lesssim & (|\lambda_1| \!+\! |\lambda_2|)( |\nabla^k \u|_{L^2} |\nabla \dr|_{L^\infty} \!+\! |\nabla \u|_{L^\infty} |\nabla^k \dr|_{L^2} \!+\! \sum_{\substack{a+b=k, \\ 2 \leq b \leq k-1}} \! |\nabla^{a+1} \u|_{L^4} |\nabla^b \dr|_{L^4} )|\nabla^k \dot{\dr}|_{L^2} \\
    \lesssim & (|\lambda_1| + |\lambda_2|) |\u|_{\dot{H}^s} |\nabla \dr|_{H^s} |\dot{\dr}|_{H^s} \, .
  \end{aligned}
\end{equation*}
Thus, we have the following estimate
  \begin{align} \label{APE-I-m}
    I^m_5 \lesssim \mu \big[ \sum_{l=1}^{4} |\nabla \dr|^l_{H^s} ( |\u|_{\dot{H}^s} + |\dot{\dr}|_{H^s} ) |\nabla \u|_{\dot{H}^s} + |\u|_{\dot{H}^s} |\nabla \dr|_{H^s} |\dot{\dr}|_{H^s} \big] \ \, .
  \end{align}
where $\mu = |\mu_1| \!+\! |\mu_2| \! +\! |\mu_3| \!+\! |\mu_4| \!+\! |\mu_5| \!+\! |\mu_6|$. As a result, combining with the above estimates of $ I_l ( 1 \leq l \leq 5 ) $, we arrive at the following estimate:
\begin{align}\label{APE-I}
     \no I \leq & - \mu_1 |\dr^\top (\nabla^k \A) \dr|^2_{L^2} + \lambda_1 |\nabla^k \dot{\dr} + (\nabla^k \B) \dr + \tfrac{\lambda_2}{\lambda_1} (\nabla^k \A) \dr |^2_{L^2} \\
     & - (\mu_5 + \mu_6 + \tfrac{\lambda_2^2}{\lambda_1}) |(\nabla^k \A) \dr|^2_{L^2} + C I^{'}\, ,
\end{align}
where
\begin{align*}
  I^{'} = & \Q (\u) |\rho|^2_{\dot{H}^s} ( 1 + |\rho|_{\dot{H}^s} ) |\u|_{H^s} + |\nabla \dr|^2_{\dot{H}^s} |\nabla \u|_{H^s} + ( |\rho|_{L^\infty} + |\rho|_{\dot{H}^s} ) |\nabla \dr|_{H^s} |\dot{\dr}|^3_{H^s} \\[1.5mm]
  & + |\nabla \dr|^2_{H^s} |\nabla \dr|_{\dot{H}^s} |\dot{\dr}|_{H^s} + \mu \sum_{l=1}^{4} |\nabla \dr|^l_{H^s} ( |\u|_{\dot{H}^s} + |\dot{\dr}|_{H^s} ) |\nabla \u|_{H^s} \, .
\end{align*}

We now turn to deal with $J$ term by term. By using the H\"older inequality, Sobolev embedding theory and the Moser-type inequality \eqref{Moser-type}, we can infer that
  \begin{align}\label{APE-J-1}
    J_1 \lesssim |[\nabla^k, \u \cdot \nabla] \u|_{L^2} |\rho \nabla^k \u|_{L^2}
    \lesssim |\nabla \u|_{L^\infty} |\nabla^k \u|_{L^2} |\rho|_{L^\infty} |\nabla^k \u|_{L^2}
    \lesssim |\rho|_{L^\infty} |\u|^3_{\dot{H}^s}
  \end{align}
%  \begin{align}\label{APE-J-1}
%    \no J_1 \lesssim & |[\nabla^k, \u \cdot \nabla] \u|_{L^2} |\rho \nabla^k \u|_{L^2}
%    \lesssim \sum_{\substack{a+b=k,\\ a \geq 1}} |\nabla^a \u \nabla^{b+1} \u|_{L^2} |\rho \nabla^k \u|_{L^2} \\
%    \no \lesssim & |\rho|_{L^\infty} |\nabla^k \u|_{L^2} \Big(|\nabla \u|_{L^\infty}|\nabla^k \u|_{L^2} + \sum_{\substack{a+b=k, \\ 2 \leq a \leq k-1}} |\nabla^a \u|_{L^4} |\nabla^{b+1} \u|_{L^4} \Big) \\
%    \lesssim & |\rho|_{L^\infty} |\u|^3_{\dot{H}^s}
%  \end{align}
and
  \begin{align}\label{APE-J-2}
    J_2 \lesssim & \big( |\nabla ( \tfrac{p'(\rho)}{\rho} )|_{L^\infty} |\nabla^k \rho|_{L^2} \!+\! |\nabla^k ( \tfrac{p'(\rho)}{\rho} )|_{L^2} |\nabla \rho|_{L^\infty} \big) |\rho|_{L^\infty} |\nabla^k \u|_{L^2}
    \!\lesssim\! \Q (\u) |\rho|_{\dot{H}^s} \P_s (|\rho|_{\dot{H}^s}) |\u|_{\dot{H}^s}
  \end{align}
%  \begin{align}\label{APE-J-2}
%    \no J_2 \lesssim & |[\nabla^k, \tfrac{p'(\rho)}{\rho} \nabla] \rho |_{L^2} |\rho \nabla^k \u|_{L^2} \\
%    \no \lesssim & |\rho|_{L^\infty} |\nabla^k \u|_{L^2} ( |\nabla \rho|_{L^\infty} |\nabla^k (\tfrac{p'(\rho)}{\rho})|_{L^2} + |\nabla (\tfrac{p'(\rho)}{\rho})|_{L^\infty} |\nabla^k \rho|_{L^2} \\
%    \no & + \sum_{\substack{a+b=k, \\ 2 \leq a \leq k-1}} |\nabla^a (\tfrac{p'(\rho)}{\rho})|_{L^4} |\nabla^{b+1} \rho|_{L^4} ) \\
%    \lesssim & \Q(\u) |\rho|_{\dot{H}^s} \P_k(|\rho|_{\dot{H}^s}) |\u|_{\dot{H}^s}
%  \end{align}
and
  \begin{align}\label{APE-J-3}
    J_3 \lesssim ( |\nabla \u|_{L^\infty} |\nabla^k \dot{\dr}|_{L^2} + |\nabla^k \u|_{L^2} |\nabla \dot{\dr}|_{L^\infty} ) |\rho|_{L^\infty} |\nabla^k \dot{\dr}|_{L^2}
    \lesssim |\rho|_{L^\infty} |\u|_{\dot{H}^s} |\dot{\dr}|^2_{H^s} \, .
  \end{align}
%  \begin{align}\label{APE-J-3}
%    \no J_3 \lesssim & |[\nabla^k, \u \cdot \nabla] \dot{\dr}|_{L^2} |\rho \nabla^k \dot{\dr}|_{L^2} \\
%    \no \lesssim & |\rho|_{L^\infty} |\nabla^k \dot{\dr}|_{L^2} ( |\nabla \u|_{L^\infty} |\nabla^k \dot{\dr}|_{L^2} + |\nabla \dot{\dr} |_{L^\infty} |\nabla^k \u|_{L^2} \\
%    \no  & + \sum_{\substack{a+b=k, \\ 2 \leq a \leq k-1}} |\nabla^a \u|_{L^4} |\nabla^{b+1} \dot{\dr}|_{L^4} ) \\
%    \lesssim & |\rho|_{L^\infty} |\u|_{\dot{H}^s} |\dot{\dr}|^2_{H^s} \, .
%  \end{align}

Similarly, one has
  \begin{align}\label{APE-J-4}
    \no J_4 \lesssim & \big( |\nabla (\tfrac{1}{\rho})|_{L^\infty} |\nabla^{k+1} \dr|_{L^2} + |\nabla^k (\tfrac{1}{\rho})|_{L^2} |\Delta \dr|_{L^\infty} \big) |\rho|_{L^\infty} |\nabla^k \dot{\dr}|_{L^2} \\
    \lesssim & \Q(\u) \P_k(|\rho|_{\dot{H}^s}) |\nabla \dr|_{\dot{H}^s} |\dot{\dr}|_{H^s} \, .
  \end{align}
%  \begin{align}\label{APE-J-4}
%    \no J_4 \lesssim & |[\nabla^k, (\tfrac{1}{\rho}) \Delta] \dr|_{L^2} |\rho \nabla^k \dot{\dr}|_{L^2} \\
%    \no \lesssim & |\rho|_{L^\infty} |\nabla^k \dot{\dr}|_{L^2} (|\nabla^k (\tfrac{1}{\rho})|_{L^2} |\Delta \dr|_{L^\infty} + |\nabla (\tfrac{1}{\rho})|_{L^\infty} |\nabla^{k-1} \Delta \dr|_{L^2} \\
%    \no & + \sum_{\substack{a+b=k, \\ 2 \leq a \leq k-1}} |\nabla^a (\tfrac{1}{\rho})|_{L^4} |\nabla^b \Delta \dr|_{L^4} ) \\
%    \lesssim & \Q(\u) \P_k(|\rho|_{\dot{H}^s}) |\nabla \dr|_{\dot{H}^s} |\dot{\dr}|_{H^s} \, .
%  \end{align}

And it is easy to deduce that $J_5$ and $J_6$ have the same estimate:
  \begin{align}\label{APE-J-5}
    J_5 \lesssim & \Q(\u) |\nabla^k \rho|_{L^2} ( |\nabla \u|_{L^\infty} |\nabla^k \rho|_{L^2} + |\nabla^k \u|_{L^2} |\nabla \rho|_{L^\infty} )
   \lesssim \Q (\u) |\rho|^2_{\dot{H}^s} |\u|_{\dot{H}^s}
  \end{align}
%  \begin{align}\label{APE-J-5}
%    \no J_5 \lesssim & |[\nabla^k, \u \cdot \nabla] \rho|_{L^2} |\tfrac{p'(\rho)}{\rho} \nabla^k \rho|_{L^2} \\
%    \no \lesssim & \Q (|\div \u|_{L^\infty}) |\nabla^k \rho|_{L^2} ( |\nabla^k \u|_{L^2} |\nabla \rho|_{L^\infty} + |\nabla \u|_{L^\infty} |\nabla^k \rho|_{L^2} \\
%    \no & + \sum_{\substack{a+b=k, \\ 2 \leq a \leq k-1}} |\nabla^a \u|_{L^4} |\nabla^{b+1} \rho|_{L^4} ) \\
%   \lesssim & \Q (\u) |\rho|^2_{\dot{H}^s} |\u|_{\dot{H}^s}
%  \end{align}
and
  \begin{align}\label{APE-J-6}
    J_6 \lesssim \Q(\u) |\nabla^k \rho|_{L^2} ( |\nabla \rho|_{L^\infty} |\nabla^k \u|_{L^2} + |\nabla^k \rho|_{L^2} |\div \u|_{L^\infty} )
    \lesssim \Q (\u) |\rho|^2_{\dot{H}^s} |\u|_{\dot{H}^s} \, .
  \end{align}
%  \begin{align}\label{APE-J-6}
%    \no J_6 \lesssim & |[\nabla^k, \rho \div] \u|_{L^2} |\tfrac{p'(\rho)}{\rho} \nabla^k \rho|_{L^2} \\
%    \no \lesssim & \Q (\u) |\nabla^k \rho|_{L^2} ( |\nabla^k \rho|_{L^2} |\div \u|_{L^\infty} + |\nabla \rho|_{L^\infty} |\nabla^{k-1} \div \u|_{L^2} \\
%    \no & + \sum_{\substack{a+b=k, \\ 2 \leq a \leq k-1}} |\nabla^a \rho|_{L^4} |\nabla^{b} \div \u|_{L^4} ) \\
%    \lesssim & \Q (\u) |\rho|^2_{\dot{H}^s} |\u|_{\dot{H}^s} \, .
%  \end{align}

For the estimate of $J_7$, using the first equation of system \eqref{compressible-Liquid-Crystal-Model} infers that
  \begin{align}\label{APE-J-7}
    J_7 \lesssim & |(\tfrac{p'(\rho)}{\rho})'|_{L^\infty} |\div (\rho \u)|_{L^\infty} |\nabla^k \rho|^2_{L^2}
    \lesssim \Q (\u) (|\rho|_{L^\infty} + | \rho|_ {\dot{H}^s} ) |\u|_{H^s} |\rho|^2_{\dot{H}^s} \, .
  \end{align}

Based on the observation of the structure of $\Sigma_1$, we only need to estimate the one part $ | [\nabla^k, \tfrac{1}{\rho} \div] \nabla \u |_{L^2} $ for the estimate of $J_8$. It is easy to obtain
\begin{align*}
  | [\nabla^k, (\tfrac{1}{\rho}) \div] \nabla \u |_{L^2}
  \lesssim |\nabla (\tfrac{1}{\rho})|_{L^\infty} |\nabla^{k+1} \u|_{L^2} + |\nabla^k (\tfrac{1}{\rho})|_{L^2} |\Delta \u|_{L^\infty}
  \lesssim \Q (\u) \P_s (|\rho|_{\dot{H}^s}) |\nabla \u|_{\dot{H}^s} \, ,
\end{align*}
%\begin{align*}
%  & | [\nabla^k, (\tfrac{1}{\rho}) \div] \nabla \u |_{L^2} \\
%  \lesssim & |\nabla^k (\tfrac{1}{\rho})|_{L^2} |\Delta \u|_{L^\infty} + |\nabla (\tfrac{1}{\rho})|_{L^\infty} |\nabla^{k-1} \Delta \u|_{L^2} + \sum_{\substack{a+b=k, \\ 2 \leq a \leq k-1}} |\nabla^a (\tfrac{1}{\rho})|_{L^4} |\nabla^{b} \Delta \u|_{L^4}\\
%  \lesssim & \Q (\u) \P_k (|\rho|_{\dot{H}^s}) |\nabla \u|_{\dot{H}^s} \, ,
%\end{align*}
so we have
\begin{equation}\label{APE-J-8}
  \begin{aligned}
    J_8 \lesssim (\mu_4 + \xi) \Q (\u) \P_s (|\rho|_{\dot{H}^s}) |\nabla \u|_{\dot{H}^s} |\u|_{\dot{H}^s} \, .
  \end{aligned}
\end{equation}

We now turn to estimate $J_9$, we should first get the estimate of $ | [\nabla^k, (\tfrac{1}{\rho}) \div] ( \nabla \dr \odot \nabla \dr) |_{L^2} $, it infers that
\begin{align*}
  & |[\nabla^k, (\tfrac{1}{\rho}) \div] ( \nabla \dr \odot \nabla \dr) |_{L^2}
  \lesssim |\nabla (\tfrac{1}{\rho})|_{L^\infty} |\nabla^k ( \nabla \dr \odot \nabla \dr )|_{L^2} + |\nabla^k (\tfrac{1}{\rho})|_{L^2} |\div ( \nabla \dr \odot \nabla \dr )|_{L^\infty} \\
  & \lesssim \Q (\u) \P_s (|\rho|_{\dot{H}^s}) \big( |\nabla^{k+1} \dr|_{L^2} |\nabla \dr|_{L^\infty} + \sum_{\substack{a+b=k,\\a,b \geq 1}} |\nabla^{a+1} \dr|_{L^4} |\nabla^{b+1} \dr|_{L^4} + |\nabla \dr|_{L^\infty} |\nabla^2 \dr|_{L^2} \big) \\
  & \lesssim \Q (\u) \P_s (|\rho|_{\dot{H}^s}) |\nabla \dr|_{H^s} |\nabla \dr|_{\dot{H}^s} \, ,
\end{align*}
%\begin{align*}
%  &|[\nabla^k, (\tfrac{1}{\rho}) \div] ( \nabla \dr \odot \nabla \dr) |_{L^2} \\
%  \lesssim & |\nabla^k (\tfrac{1}{\rho})|_{L^2} |\nabla \dr|^2_{L^\infty} + \sum_{\substack{a+b+c=k, \\ 1 \leq a \leq k-1}} |\nabla^a (\tfrac{1}{\rho}) \nabla^{b+1} \dr \nabla^{c+1} \dr|_{L^2} \\
%  \lesssim & \Q (\u) \P_k (|\rho|_{\dot{H}^s}) |\nabla \dr|_{H^s} |\nabla \dr|_{\dot{H}^s} \, ,
%\end{align*}
hence we obtain
\begin{equation}\label{APE-J-9}
  \begin{aligned}
    J_9 \lesssim \Q (\u) \P_k (|\rho|_{\dot{H}^s}) |\u|_{\dot{H}^s} |\nabla \dr|_{H^s} |\nabla \dr|_{\dot{H}^s} \, .
  \end{aligned}
\end{equation}

By the analysis of the construction of $J_{10}$, we only need to estimate the following three parts:
\begin{align*}
  & | [\nabla^k, (\tfrac 1 \rho) \div] (\dr_i \dr_j \dr_p \dr_q \A_{pq}) |_{L^2} \\
  \lesssim & |\nabla^k (\tfrac{1}{\rho})|_{L^2} |\div (\dr_i \dr_j \dr_p \dr_q \A_{pq} )|_{L^\infty} + |\nabla (\tfrac 1 \rho)|_{L^\infty} |\nabla^{k-1} \div (\dr_i \dr_j \dr_p \dr_q \A_{pq})|_{L^2} \\
%  & + \sum_{\substack{a+b+c=k+1, \\ 2 \leq a \leq k-1}} \sum_{c_1 + c_2 + c_3 +c_4=c} |\nabla^a (\tfrac{1}{\rho}) \nabla^{b+1} \u \nabla^{c_1} \dr \nabla^{c_2} \dr \nabla^{c_3} \dr \nabla^{c_4} \dr |_{L^2} \\
  \lesssim & \Q (\u) \P_k (|\rho|_{\dot{H}^s}) |\nabla \u|_{H^s} \sum_{l=0}^{4} |\nabla \dr|^l_{H^s} \, ,
\end{align*}
and
\begin{align*}
   | [\nabla^k, (\tfrac 1 \rho) \div] (\dr_j \dot{\dr}_i) |_{L^2}
   \lesssim &  | \nabla^k (\tfrac{1}{\rho}) |_{L^2} |\div (\dr \dot{\dr})|_{L^\infty} + |\nabla (\tfrac{1}{\rho})|_{L^\infty} |\nabla^{k-1} \div (\dr \dot{\dr})|_{L^2} \\
%   & + \sum_{\substack{a+b+c=k+1, \\ 2 \leq a \leq k-1}} |\nabla^a (\tfrac{1}{\rho})|_{L^4} |\nabla^b \dr \nabla^c \dot{\dr}|_{L^4} \\
   \lesssim & \Q (\u) \P_k (|\rho|_{\dot{H}^s}) |\dot{\dr}|_{H^s} (1 + |\nabla \dr|_{H^s})
\end{align*}
and
\begin{align*}
  |[\nabla^k, (\tfrac{1}{\rho}) \div] (\dr_j \B_{pi} \dr_i)|_{L^2}
  \lesssim & |\nabla^k (\tfrac{1}{\rho})|_{L^2} |\div (\dr_j \B_{pi} \dr_i)|_{L^\infty} + |\nabla (\tfrac{1}{\rho})|_{L^\infty} |\nabla^{k-1} \div (\dr_j \B_{pi} \dr_i)|_{L^2} \\
%  & + \sum_{\substack{a+b+c=k+1, \\ 2 \leq a \leq k-1}} \sum_{c_1 + c_2=c }|\nabla^a (\tfrac{1}{\rho})|_{L^4} |\nabla^{b+1} \u \nabla^{c_1} \dr \nabla^{c_2} \dr|_{L^4}\\
  \lesssim & \Q (\u) \P_k (|\rho|_{\dot{H}^s}) |\nabla \u|_{H^s} \sum_{l=0}^{2} |\nabla \dr|^l_{H^s} \, .
\end{align*}
As a result, one has
\begin{equation}\label{APE-J-10}
  \begin{aligned}
    J_{10} \lesssim & \mu \Q (\u) \P_k (|\rho|_{\dot{H}^s}) |\u|_{\dot{H}^s} ( |\nabla \u|_{H^s} + |\dot{\dr}|_{H^s} ) \sum_{l=0}^{4} |\nabla \dr|^l_{H^s} \, .
  \end{aligned}
\end{equation}

As for the estimate of $J_{11}$, from the representation of Lagrangian $\Gamma$ we can divide it into three parts
\begin{align*}
  & \sum_{\substack{a+b=k,\\ a \geq 1}}|\nabla^a (\tfrac{1}{\rho}) \nabla^b (\rho |\dot{\dr}|^2 \dr)|_{L^2} \\
  \lesssim & |\nabla^k (\tfrac{1}{\rho})|_{L^2} |\rho|_{L^\infty} |\dot{\dr}|^2_{L^\infty} + \sum_{\substack{a+b+c+e = k, \\ 1 \leq a \leq k-1}} \sum_{c_1 + c_2 =c}|\nabla^a (\tfrac{1}{\rho}) \nabla^b \rho \nabla^{c_1} \dot{\dr} \nabla^{c_2} \dot{\dr} \nabla^e \dr|_{L^2} \\
  \lesssim & \Q (\u) \P_k (|\rho|_{\dot{H}^s}) |\dot{\dr}|^2_{H^s} (1 + |\nabla \dr|_{H^s})
\end{align*}
and
\begin{align*}
  & \sum_{\substack{a+b=k,\\ a \geq 1}} |\nabla^a (\tfrac{1}{\rho}) \nabla^b ( |\nabla \dr|^2 \dr)|_{L^2} \\
  \lesssim & |\nabla^k (\tfrac{1}{\rho})|_{L^2} |\nabla \dr|^2_{L^\infty} + \sum_{\substack{a+b+c = k, \\ 1 \leq a \leq k-1}} \sum_{b_1 + b_2 =b}|\nabla^a (\tfrac{1}{\rho}) \nabla^{b_1+1} \dr \nabla^{b_2+1} \dr \nabla^c \dr|_{L^2} \\
  \lesssim & \Q (\u) \P_k (|\rho|_{\dot{H}^s}) |\nabla \dr|^2_{H^s} (1 + |\nabla \dr|_{H^s})
\end{align*}
and similarly
\begin{align*}
  & \sum_{\substack{a+b=k,\\ a \geq 1}} |\nabla^a (\tfrac{1}{\rho}) \nabla^b ( (\dr^\top \A \dr) \dr)|_{L^2} \\
  \lesssim & |\nabla^k (\tfrac{1}{\rho})|_{L^2} | (\dr^\top \A \dr) \dr |_{L^\infty} + \sum_{\substack{a+b+c = k, \\ 1 \leq a \leq k-1}} \sum_{c_1 + c_2 + c_3 = c}|\nabla^a (\tfrac{1}{\rho}) \nabla^{b+1} \u \nabla^{c_1} \dr \nabla^{c_2} \dr \nabla^{c_3} \dr |_{L^2} \\
  \lesssim & \Q (\u) \P_k (|\rho|_{\dot{H}^s}) |\u|_{\dot{H}^s} \sum_{l=0}^{3} |\nabla \dr|^l_{H^s} \, .
\end{align*}
Based on the above three estimates, we have that
  \begin{align}\label{APE-J-11}
    J_{11} \lesssim \Q (\u) \P_k (|\rho|_{\dot{H}^s}) |\u|_{\dot{H}^s} \big\{ ( |\dot{\dr}|^2_{H^s} \!+\! |\nabla \dr|^2_{H^s} ) (1 \!+\! |\nabla \dr|_{H^s}) \!+\!  |\lambda_2| |\u|_{\dot{H}^s} \sum_{l=0}^{3} |\nabla \dr|^l_{H^s} \big\}  \, .
  \end{align}

Now we turn to control the last two terms of $J$, it is easy to deduce that
\begin{align*}
  \sum_{\substack{a+b=k, \\ a \geq 1 }} |\nabla^a (\tfrac{1}{\rho}) \nabla^b \dot{\dr}|_{L^2}
  \lesssim & |\nabla^k (\tfrac{1}{\rho})|_{L^2} |\dot{\dr}|_{L^\infty} + \sum_{\substack{a+b=k, \\ 1 \leq a \leq k-1 }} |\nabla^a (\tfrac{1}{\rho})|_{L^4} |\nabla^b \dot{\dr}|_{L^4} \\
  \lesssim & \Q (\u) \P_k (|\rho|_{\dot{H}^s}) |\dot{\dr}|_{H^s} \, ,
\end{align*}
and
\begin{align*}
  \sum_{\substack{a+b=k, \\ a \geq 1 }} |\nabla^a (\tfrac{1}{\rho}) \nabla^b (\B \dr)|_{L^2}
  \lesssim & |\nabla^k (\tfrac{1}{\rho})|_{L^2} |\B \dr|_{L^\infty} + \sum_{\substack{a+b+c=k, \\ 1 \leq a \leq k-1 }} | \nabla^a (\tfrac{1}{\rho}) \nabla^{b+1} \u \nabla^c \dr|_{L^2} \\
  \lesssim & \Q (\u) \P_k (|\rho|_{\dot{H}^s}) |\u|_{\dot{H}^s} ( 1 + |\nabla \dr|_{H^s} ) \, ,
\end{align*}
then we have
\begin{align}\label{APE-J-12}
  J_{12} \lesssim |\lambda_1| \Q (\u) \P_k (|\rho|_{\dot{H}^s}) |\dot{\dr}|^2_{H^s}
\end{align}
and
\begin{align}\label{APE-J-13}
  J_{13} \lesssim ( |\lambda_1| + |\lambda_2| ) \Q (\u) \P_k (|\rho|_{\dot{H}^s}) |\u|_{\dot{H}^s} |\dot{\dr}|_{H^s} ( 1 + |\nabla \dr|_{H^s} ) \, .
\end{align}

Therefore, the combination of the estimates of $J_l (1 \leq l \leq 13)$ leads to
  \begin{align}\label{APE-J}
    \no J \lesssim & \Q(\u) |\u|_{\dot{H}^s} ( |\u|^2_{\dot{H}^s} \!+\! |\dot{\dr}|^2_{H^s} ) \!+\! \Q(\u) \P_s(|\rho|_{\dot{H}^s}) ( |\rho|_{\dot{H}^s} |\u|_{\dot{H}^s} \!+\! |\nabla \dr|_{\dot{H}^s} |\dot{\dr}|_{H^s}) \\
    \no + & \Q (\u) ( |\rho|_{L^\infty} + |\rho|_{\dot{H}^s} ) |\rho|^2_{\dot{H}^s} |\u|_{H^s} + (\mu_4 + \xi) \Q (\u) \P_s (|\rho|_{\dot{H}^s}) |\nabla \u|_{\dot{H}^s}  |\u|_{\dot{H}^s} \\
    \no + & \Q (\u) \P_s (|\rho|_{\dot{H}^s}) |\u|_{\dot{H}^s} (|\nabla \dr|_{\dot{H}^s} \!+\!  \mu |\nabla \u|_{H^s} \!+\!  \mu |\dot{\dr}|_{H^s} ) \!\sum_{l=0}^{4}\! |\nabla \dr|^l_{H^s} \!+\! |\lambda_1| \Q (\u) \P_s (|\rho|_{\dot{H}^s}) |\dot{\dr}|^2_{H^s}\\
    + & \Q (\u) \P_s (|\rho|_{\dot{H}^s}) |\u|_{\dot{H}^s} \big\{ ( |\dot{\dr}|^2_{H^s} \!+\! |\nabla \dr|^2_{H^s} ) (1 \!+\! |\nabla \dr|_{H^s}) \!+\! |\lambda_2| |\u|_{\dot{H}^s} \!\sum_{l=0}^{3}\! |\nabla \dr|^l_{H^s} \big\} \, .
  \end{align}

Substituting the estimate \eqref{APE-I} and \eqref{APE-J} into \eqref{APE-1} and summing up for all $1 \leq k \leq s$, then combining the result with the basic energy law \eqref{Basic-Ener-Law} infers that
  \begin{align}\label{APE}
    \no & \tfrac{1}{2} \tfrac{\d}{\d t} \Big( \mathcal{N}_s (\rho) + | \u |^2_{\rho, H^s} + | \dot{\dr} |^2_{\rho, H^s} + |\nabla \dr|^2_{H^s} \Big) + \Big( \tfrac{1}{2} \mu_4 |\nabla \u|^2_{H^s} + ( \tfrac{1}{2} \mu_4 + \xi ) |\div \u|^2_{H^s} \Big) \\[1.5mm]
    \no + & \!\mu_1 \!\sum_{k=0}^{s}\! |\dr^\top \!(\nabla^k \!\A) \dr|^2_{L^2} \!-\! \lambda_1 \!\sum_{k=0}^{s}\! |\nabla^k \dot{\dr} \!+\! (\nabla^k \! \B) \dr \!+\! \tfrac{\lambda_2}{\lambda_1} (\nabla^k \! \A) \dr |^2_{L^2} \!+\! (\mu_5 \!+\! \mu_6 \!+\! \tfrac{\lambda_2^2}{\lambda_1}) \!\sum_{k=0}^{s}\! |(\nabla^k \! \A) \dr|^2_{L^2}\\
    \no \lesssim & \Q (\u) |\rho|^2_{\dot{H}^s} ( 1 + |\rho|_{\dot{H}^s} ) |\u|_{H^s} + ( |\rho|_{L^\infty} + |\rho|_{\dot{H}^s} ) |\nabla \dr|_{H^s} |\dot{\dr}|^3_{H^s} + |\lambda_1| \Q (\u) \P_s (|\rho|_{\dot{H}^s}) |\dot{\dr}|^2_{H^s} \\[1.5mm]
    \no + & |\nabla \dr|^2_{\dot{H}^s} |\nabla \u|_{H^s} + |\nabla \dr|^2_{H^s} |\nabla \dr|_{\dot{H}^s} |\dot{\dr}|_{H^s} + \mu \sum_{l=1}^{4} |\nabla \dr|^l_{H^s} ( |\u|_{\dot{H}^s} + |\dot{\dr}|_{H^s} ) |\nabla \u|_{H^s}\\
    \no + & \Q(\u) |\u|_{\dot{H}^s} ( |\u|^2_{\dot{H}^s} \!+\! |\dot{\dr}|^2_{H^s} ) \!+\! \Q(\u) \P_s(|\rho|_{\dot{H}^s}) ( |\rho|_{\dot{H}^s} |\u|_{\dot{H}^s} \!+\! |\nabla \dr|_{\dot{H}^s} |\dot{\dr}|_{H^s}) \\[1.5mm]
    \no + & \Q (\u) \P_s (|\rho|_{\dot{H}^s}) |\u|_{\dot{H}^s} (|\nabla \dr|_{\dot{H}^s} +  (\mu + \xi) |\nabla \u|_{H^s} +  \mu |\dot{\dr}|_{H^s} ) \sum_{l=0}^{4} |\nabla \dr|^l_{H^s} \\
    + & \Q (\u) \P_s (|\rho|_{\dot{H}^s}) |\u|_{\dot{H}^s} \!\big\{ ( |\dot{\dr}|^2_{H^s} \!+\! |\nabla \dr|^2_{H^s} ) (1 \!+\! |\nabla \dr|_{H^s}) \!+\! |\lambda_2| |\u|_{\dot{H}^s} \!\sum_{l=0}^{3}\! |\nabla \dr|^l_{H^s} \big\} \, .
  \end{align}

By Lemma \ref{rho-bound-Lemma}, one can infer that
\begin{align*}
  |\tfrac{\rho}{p'(\rho)}|_{L^\infty} \leq C (\underline{\rho}, \overline{\rho}, \gamma) \exp\Big\{ |\gamma-2| \int_{0}^{t} |\div \u|_{L^\infty} (\tau) \d \tau \Big\} \, ,
\end{align*}
then we have
\begin{align*}
  |\rho|^2_{\dot{H}^s} \leq & C (\underline{\rho}, \overline{\rho}, \gamma) \exp \Big\{ |\gamma-2| \int_{0}^{t} |\div \u|_{L^\infty} (\tau) \d \tau \Big\} \sum_{k=1}^{s}\int_{\mathbb{R}^N} \tfrac{p'(\rho)}{\rho} |\nabla^k \rho|^2 \d x \\
  \lesssim & \Q (\u) E(t) \, .
\end{align*}
Similarly,
\begin{align*}
  |\u|_{H^s}^2 \lesssim \Q (\u) E(t)\, , \ \ |\dot{\dr}|_{H^s}^2 \lesssim \Q (\u) E(t)\, .
\end{align*}
Meanwhile, noticing the fact that
\begin{align*}
  \P_s ( |\rho|_{\dot{H}^s} ) \lesssim \Q (\u) \sum_{l=1}^{s} E^{\frac{l}{2}} (t) \, ,
\end{align*}
then combining with inequality \eqref{APE}, we can deduce that
\begin{align}\label{Energy-Estimate}
   & \tfrac{1}{2} \tfrac{\d}{\d t} E(t) + D(t)
%   \no \lesssim & \Q (\u) \big( E^{\frac{3}{2}} (t) + E^2 (t) \big) \big(1 + E^{\frac{1}{2}} (t)\big) + (1 + \mu) \Q (\u) \P_s (|\rho|_{\dot{H}^s}) \sum_{l=1}^{6} E^{\frac{l}{2}} (t) \\
%   \no & + \tfrac{1}{\sqrt{\mu_4}} D^{\frac{1}{2}} (t) E (t) + \tfrac{\mu}{\sqrt{\mu_4}} \Q (\u) D^{\frac{1}{2}} (t) \sum_{l=1}^{4} E^{\frac{l+1}{2}} (t) + \tfrac{\mu + \xi}{\sqrt{\mu_4}} \Q (\u) \P_s (|\rho|_{\dot{H}^s}) D^{\frac{1}{2}} (t) \sum_{l=0}^{4} E^{\frac{l+1}{2}} (t) \\
   \lesssim \Q (\u) (1+\mu) \sum_{l=2}^{s+6} E^{\frac{l}{2}} (t) + \tfrac{1}{\sqrt{\mu_4}} D^{\frac{1}{2}} (t) E (t) + \tfrac{\mu + \xi}{\sqrt{\mu_4}} \Q (\u) D^{\frac{1}{2}} (t) \sum_{l=1}^{s+5} E^{\frac{l}{2}} (t) \, .
\end{align}

Then, the inequality \eqref{A-Priori-Estimate-0} holds for all $t \in [0,T]$, and this completes the proof of Lemma \ref{A-Priori-Estimate}.
\end{proof}

\section{well-posedness of system \eqref{compressible-Liquid-Crystal-Model} for a given  $(\rho, \u)$} \label{Sect-4}

In this section, we focus on the well-posedness of the following hyperbolic system
\begin{equation}\label{rho-u-given}
  \begin{aligned}
  \left \{ \begin{array} {c}
  \rho \ddot{\dr} = \Delta \dr + \Gamma (\rho, \u ,\dr, \dot{\dr}) \dr + \lambda_1 (\dot{\dr} + \B \dr) + \lambda_2 \A \dr \, , \\[1.5mm]
  \dr \in \mathbb{S}^{N-1} \, ,
   \end{array}
   \right.
  \end{aligned}
\end{equation}
with the initial conditions
\begin{equation}\label{rho-u-given-Ini}
  \dr(x,0) = \dr^{in} (x) \in \mathbb{S}^{N-1},\ \dot{\dr}(x,0) = \tilde{\dr}^{in}(x) \in \R^N \, ,
\end{equation}
where the density $\rho \in \mathbb{R}$ and bulk velocity $\u \in \mathbb{R}^N$ are given satisfying the relation $\p_t \rho + \div(\rho \u) =0$, and $\dot{\dr}$ is the first order material derivative of the vector field $\dr$, the Lagrangian multiplier $\Gamma(\rho, \u, \dr, \dot{\dr})$ is of the form \eqref{Lagrangian}, and the initial data satisfy the compatibility
$$\dr^{in} \cdot \tilde{\dr}^{in} = 0\, .$$

Before getting the well-posedness of the system \eqref{rho-u-given}-\eqref{rho-u-given-Ini}, we should give the following lemma which is concerned with the relation between the Lagrangian multiplier $\Gamma$ and the geometric constraint $|\dr|=1$ and play an important role in our analysis.

\begin{lemma} \label{lemma-d}
Let $s>\tfrac{N}{2}+1$ and $T>0$. Assume the given function $\rho, \u$ in the equation \eqref{rho-u-given} satisfy $\rho, \tfrac{1}{\rho} \in L^1 (0,T; L^\infty)$, $\u \in L^1 (0,T; W^{1,\infty})$ and $\dr$ is a classical solution to \eqref{rho-u-given}-\eqref{rho-u-given-Ini} satisfying $\dot{\dr} \in L^\infty (0,T; H^s)$, $\nabla \dr \in L^\infty (0,T; H^s_{\rho^{-1/2}})$.

If the constraint $|\dr|=1$ is required, then the Lagrangian multiplier $\Gamma$ is
  \begin{equation}\label{gamma}
    \Gamma = - \rho |\dot{\dr}|^2 + |\nabla \dr|^2 - \lambda_2 \dr^\top \A \dr \, .
  \end{equation}

  Conversely, if $\Gamma$ is of the form \eqref{gamma} and the initial data $\dr^{in}$ and $\tilde{\dr}^{in}$ are such that $\tilde{\dr}^{in} \cdot \dr^{in} = 0, |\dr^{in}|=1$, then $|\dr|=1$ holds for all $t \in [0,T]$.
\end{lemma}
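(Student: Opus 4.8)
The plan is to treat both directions through the scalar function $\phi := |\dr|^2 - 1$, exploiting that equation \eqref{rho-u-given} forces a closed linear identity for $\phi$ once $\Gamma$ is prescribed. For the direct implication one essentially repeats the computation behind \eqref{Lagrangian}: if $|\dr| \equiv 1$, then differentiating this relation along the flow gives $\dr \cdot \dot{\dr} = 0$ and hence $|\dot{\dr}|^2 + \dr \cdot \ddot{\dr} = 0$. Taking the $\R^N$-inner product of \eqref{rho-u-given} with $\dr$ and using $\dr \cdot \B\dr = \B_{ki}\dr_k\dr_i = 0$ (skew-symmetry of $\B$), $\dr \cdot \Delta\dr = \tfrac12\Delta|\dr|^2 - |\nabla\dr|^2 = -|\nabla\dr|^2$, $\dr \cdot \dot{\dr} = 0$ and $\dr \cdot \ddot{\dr} = -|\dot{\dr}|^2$, every term collapses and one reads off $\Gamma = -\rho|\dot{\dr}|^2 + |\nabla\dr|^2 - \lambda_2\dr^\top\A\dr$, which is \eqref{gamma}.

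For the converse the central observation is that if $\Gamma$ is given by \eqref{gamma}, then $\phi$ solves a \emph{homogeneous} linear second-order equation. I would again take the inner product of \eqref{rho-u-given} with $\dr$, now \emph{without} assuming $|\dr| = 1$, and substitute $\dr \cdot \ddot{\dr} = \tfrac12\ddot{\phi} - |\dot{\dr}|^2$, $\dr \cdot \dot{\dr} = \tfrac12\dot{\phi}$, $\dr \cdot \B\dr = 0$, $\dr \cdot \Delta\dr = \tfrac12\Delta\phi - |\nabla\dr|^2$, $|\dr|^2 = 1 + \phi$, together with \eqref{gamma} for $\Gamma$. A short calculation in which the $\rho|\dot{\dr}|^2$ contributions cancel yields
\begin{equation*}
  \rho\ddot{\phi} = \Delta\phi + \lambda_1\dot{\phi} + 2\big(-\rho|\dot{\dr}|^2 + |\nabla\dr|^2 - \lambda_2\dr^\top\A\dr\big)\phi \, ,
\end{equation*}
that is, $\rho\ddot{\phi} = \Delta\phi + \lambda_1\dot{\phi} + 2\Gamma\phi$, which is linear and homogeneous in $\phi$. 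Under the stated hypotheses ($s > \tfrac N2 + 1$, so $H^s \hookrightarrow L^\infty$, together with $\dot{\dr} \in L^\infty(0,T;H^s)$, $\nabla\dr \in L^\infty(0,T;H^s_{\rho^{-1/2}})$, $\u \in L^1(0,T;W^{1,\infty})$ and $\rho, \tfrac1\rho \in L^1(0,T;L^\infty)$) all the coefficients of this equation belong to $L^1(0,T;L^\infty)$. Moreover the data satisfy $\phi|_{t=0} = |\dr^{in}|^2 - 1 = 0$, hence $\nabla\phi|_{t=0} = 0$, and $\dot{\phi}|_{t=0} = 2\dr^{in}\cdot\tilde{\dr}^{in} = 0$.

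It then remains to deduce $\phi \equiv 0$ by a uniqueness (energy) argument. I would multiply the $\phi$-equation by $\dot{\phi}$ and integrate over $\R^N$: writing $\ddot{\phi} = \p_t\dot{\phi} + \u\cdot\nabla\dot{\phi}$ and integrating by parts, the contribution of $\rho\ddot{\phi}$ equals $\tfrac{\d}{\d t}\int_{\R^N}\tfrac12\rho|\dot{\phi}|^2\,\d x$ plus the term $-\tfrac12\int_{\R^N}(\p_t\rho + \div(\rho\u))|\dot{\phi}|^2\,\d x$, which vanishes by the transport relation $\p_t\rho + \div(\rho\u) = 0$; the term $\int_{\R^N}\Delta\phi\,\dot{\phi}\,\d x = -\int_{\R^N}\nabla\phi\cdot\nabla\dot{\phi}\,\d x$ produces $-\tfrac{\d}{\d t}\int_{\R^N}\tfrac12|\nabla\phi|^2\,\d x$ plus terms bounded by $|\nabla\u|_{L^\infty}|\nabla\phi|^2_{L^2}$. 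Because the resulting natural energy $\tfrac12\int_{\R^N}(\rho|\dot{\phi}|^2 + |\nabla\phi|^2)\,\d x$ does not control $|\phi|_{L^2}$ — which is needed to absorb the zeroth-order term $2\Gamma\phi$ — I would enlarge it by $|\phi|^2_{L^2}$, whose evolution obeys $\tfrac{\d}{\d t}|\phi|^2_{L^2} = 2\int_{\R^N}\phi\dot{\phi}\,\d x + \int_{\R^N}(\div\u)|\phi|^2\,\d x$. Setting $\mathcal{F}(t) := \int_{\R^N}\big(\tfrac12\rho|\dot{\phi}|^2 + \tfrac12|\nabla\phi|^2 + |\phi|^2\big)\,\d x$ one obtains $\tfrac{\d}{\d t}\mathcal{F}(t) \le C(t)\mathcal{F}(t)$ with $C \in L^1(0,T)$ (the time-integrability of $C$ coming from $\rho, \tfrac1\rho, \nabla\u, \Gamma \in L^1_tL^\infty_x$); Grönwall together with $\mathcal{F}(0) = 0$ then forces $\mathcal{F} \equiv 0$ on $[0,T]$, i.e. $|\dr| \equiv 1$.

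The main obstacle is exactly this last energy estimate: one must follow the material-derivative structure carefully so that after integration by parts precisely the combination $\p_t\rho + \div(\rho\u)$ appears and cancels, and then verify that every surviving coefficient — notably $\Gamma$ and the terms built from $\nabla\u$ and $\tfrac1\rho$ — lies in $L^1(0,T;L^\infty)$, which is what makes the Grönwall argument close; the zeroth-order term $2\Gamma\phi$ is also the reason the bare hyperbolic energy must be augmented by $|\phi|^2_{L^2}$.
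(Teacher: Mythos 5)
Your proof is correct and it is precisely the argument the authors have in mind: the paper omits the details and cites Lemma~4.1 of \cite{Jiang-Luo-arXiv-2017}, which is the incompressible version of exactly this computation, and your proposal is its direct compressible adaptation. The closed, linear, homogeneous identity $\rho\ddot{\phi} = \Delta\phi + \lambda_1\dot{\phi} + 2\Gamma\phi$ for $\phi := |\dr|^2 - 1$ is derived correctly (the $-\rho|\dot{\dr}|^2$ terms on the two sides cancel exactly because $\Gamma$ has the postulated form, and $\dr\cdot\B\dr = 0$, $\dr\cdot\Delta\dr = \tfrac12\Delta\phi - |\nabla\dr|^2$, $\dr\cdot\ddot{\dr} = \tfrac12\ddot{\phi} - |\dot{\dr}|^2$ are all right); the cancellation through $\p_t\rho + \div(\rho\u) = 0$ after pairing $\rho\ddot{\phi}$ against $\dot{\phi}$ is the intended use of the standing transport hypothesis on $(\rho,\u)$; and augmenting the hyperbolic energy by $|\phi|^2_{L^2}$ is exactly what is needed to absorb the zeroth-order term.

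Two small points worth spelling out when you write the Gr\"onwall step. First, $\dr\in L^\infty([0,T]\times\R^N)$ is needed so that the $\lambda_2\dr^\top\A\dr$ piece of $\Gamma$ is $L^1_tL^\infty_x$; it is not among the stated norms but follows from $\p_t\dr = \dot{\dr} - \u\cdot\nabla\dr$, $H^s\hookrightarrow L^\infty$, and the hypotheses on $\u$ and $\nabla\dr$. Second, when bounding $2\int\Gamma\phi\dot{\phi}\,\d x$ you should split $\Gamma$ and pair the piece $-\rho|\dot{\dr}|^2$ directly with the weight $\rho$ already present in $\int\rho|\dot{\phi}|^2\,\d x$ (and distribute a factor $\rho^{1/2}$ onto $\dot{\phi}$ for the remaining pieces), rather than first bounding $|\dot{\phi}|^2_{L^2}\le |\tfrac1\rho|_{L^\infty}\int\rho|\dot{\phi}|^2\,\d x$ across the board; the latter creates products like $|\nabla\u|_{L^\infty}|\tfrac1\rho|_{L^\infty}$ that are not obviously in $L^1(0,T)$ under the hypotheses as literally stated, while the weighted pairing keeps the Gr\"onwall coefficient in $L^1(0,T)$.
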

%\begin{lemma} \label{lemma-d}
%  Assume $(\rho, \u, \dr)$ is a classical solution to the  Ericksen-Leslie's compressible model \eqref{compressible-Liquid-Crystal-Model}-\eqref{Initial-Data} satisfying $\rho \in L^{\infty} (0,T; \dot{H}^s(\mathbb{R}^N) \cap L^\gamma (\mathbb{R}^N))$, $\u \!\in\! L^{\infty} (0,T; H^s(\mathbb{R}^N)) \cap L^{2} (0,T; H^{s+1}(\mathbb{R}^N)) $, $\nabla \dr \in L^{\infty}(0,T; H^s(\mathbb{R}^N))$, $\dot{\dr} \in L^{\infty} (0,T; H^s(\mathbb{R}^N)) $ and $|\dr|_{L^\infty (0, T; \mathbb{R}^N)} \!<\! \infty$ for some $T \in (0, \infty)$, where $s >  \tfrac{N}{2} + 1$.
%
%  If the constraint $|\dr|=1$ is required, then the Lagrangian $\Gamma$ is
%  \begin{equation}\label{gamma}
%    \Gamma = - \rho |\dot{\dr}|^2 + |\nabla \dr|^2 - \lambda_2 \dr^\top \A \dr \, .
%  \end{equation}
%
%  Conversely, if we have the form of $\Gamma$ as \eqref{gamma} and $\dr$ satisfies the initial data conditions $\tilde{\dr}^{in} \cdot \dr^{in} = 0, |\dr^{in}|=1$, then $|\dr|=1$.
%\end{lemma}
\begin{proof}
  We can proof this lemma similar as Lemma 4.1 of the reference \cite{Jiang-Luo-arXiv-2017}. Here we omit the detail of the proof for simplicity.
\end{proof}

%\begin{remark}
%  In fact, if we assume that the liquid crystal flow with a given density $\rho \in L^1 (0,T; W^{1,\infty} (\R^N))$ and bulk velocity $\u \in L^1 (0,T; W^{1,\infty} (\R^N))$ with the last two initial conditions of \eqref{Initial-Data}, such that the form of the third equation of \eqref{compressible-Liquid-Crystal-Model} has a solution $\dr$ satisfying $\nabla \dr \in L^\infty (0,T;H^s(\R^N)), \dot{\dr} \in L^\infty (0,T;H^s(\R^N))$ and $|\dr|_{L^\infty ([0,T] \times \R^N)} < \infty$ for some $T \in (0,\infty)$, then the conclusions of Lemma \ref{lemma-d} also hold.
%\end{remark}

Then we state the well-posedness of the system \eqref{rho-u-given}-\eqref{rho-u-given-Ini} in the following proposition. Here we point out that the proposition will play an important role in constructing the iterating approximate system of the system \eqref{compressible-Liquid-Crystal-Model}-\eqref{Initial-Data}.

\begin{proposition}\label{Proposition-1}
  Let $s>\tfrac{N}{2}+1$ and $T_0>0$. Assume the given functions $\rho, \u$ in \eqref{rho-u-given} satisfy $\u \in L^1 (0,T_0; H^{s+1})$, $\rho \in L^\infty (0,T_0; \dot{H}^s)$ and $\underline{\rho} \leq \rho (x,0) \leq \bar{\rho}$ for some $\underline{\rho}, \bar{\rho} >0$.

  If $\tilde{\dr}^{in} \in H^s$, $\nabla \dr^{in} \in H^s_{\rho(\cdot, 0)^{-1/2}}$, then there exist $0 < T \leq T_0$ and $C_2>0$, depending only on $\dr^{in}, \tilde{\dr}^{in}, \rho$ and $\u$, such that \eqref{rho-u-given}-\eqref{rho-u-given-Ini} admits the unique classical solution $\dr$ satisfying $\dot{\dr} \in C (0,T; H^s)$ and $ \nabla \dr \in C(0,T; H^s_{\rho^{-1/2}})$. Moreover, the following energy bound holds:
  \begin{align*}
    |\dot{\dr}|_{L^\infty(0,T;H^s (\R^N))}^2 + |\nabla \dr|_{L^\infty(0,T;H^s_{\rho^{-1/2}}(\R^N))} \leq C_2 \,.
  \end{align*}
\end{proposition}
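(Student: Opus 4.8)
The plan is to construct the solution by a Friedrichs mollifier scheme, exactly as for the incompressible case in \cite{Jiang-Luo-arXiv-2017}, with the density‑weighted energy playing the role it plays in Lemma \ref{A-Priori-Estimate}. The starting observation is Lemma \ref{lemma-d}: because the data satisfy $|\dr^{in}|=1$, $\tilde{\dr}^{in}\cdot\dr^{in}=0$, and $\Gamma$ is taken in the form \eqref{Lagrangian}, the constraint $|\dr|=1$ is automatically propagated, so it suffices to solve the \emph{unconstrained} equation $\rho\ddot{\dr}=\Delta\dr+\Gamma\dr+\lambda_1(\dot{\dr}+\B\dr)+\lambda_2\A\dr$ and then reinvoke Lemma \ref{lemma-d} at the end. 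Note that $\u\in L^1(0,T_0;H^{s+1})\hookrightarrow L^1(0,T_0;W^{1,\infty})$ together with $\underline{\rho}\le\rho(\cdot,0)\le\bar\rho$ and Lemma \ref{rho-bound-Lemma} give two‑sided bounds $0<\underline{\rho}_*\le\rho(x,t)\le\bar{\rho}_*<\infty$ on $[0,T_0]$, hence $\rho,\tfrac1\rho\in L^\infty(0,T_0;L^\infty)$, and the identity $\partial_t\rho+\div(\rho\u)=0$ will be the device that keeps the $\rho$‑weighted energy closed.

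First I would set up the regularized problem: letting $J_\epsilon$ be the standard mollifier, replace every spatial differential operator acting on the unknown by its $J_\epsilon$‑regularization, e.g. $\Delta\dr\mapsto J_\epsilon\Delta J_\epsilon\dr^\epsilon$ and $\u\cdot\nabla\mapsto J_\epsilon(\u\cdot\nabla J_\epsilon\,\cdot\,)$, and take $\Gamma^\epsilon=-\rho|\dot{\dr}^\epsilon|^2+|\nabla J_\epsilon\dr^\epsilon|^2-\lambda_2(J_\epsilon\dr^\epsilon)^\top\A(J_\epsilon\dr^\epsilon)$, keeping the data \eqref{rho-u-given-Ini}. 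Written as a first‑order system for $(\dr^\epsilon,\dot{\dr}^\epsilon)$ (recall $\dot{\dr}^\epsilon=\partial_t\dr^\epsilon+\u\cdot\nabla\dr^\epsilon$), the right‑hand side is, for each fixed $\epsilon$, a locally Lipschitz map of $H^s\times H^s$ into itself, so the Cauchy--Lipschitz theorem in Banach spaces yields a unique maximal solution $\dr^\epsilon$ on $[0,T^\epsilon)$ with $\dot{\dr}^\epsilon\in C([0,T^\epsilon);H^s)$ and $\nabla\dr^\epsilon\in C([0,T^\epsilon);H^s)$.

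The heart of the argument is a uniform‑in‑$\epsilon$ energy estimate. Set $\mathcal{E}^\epsilon(t)=\sum_{k=0}^s\big(\int_{\R^N}\rho|\nabla^k\dot{\dr}^\epsilon|^2\,\d x+|\nabla^{k+1}\dr^\epsilon|_{L^2}^2\big)$, which is comparable to $|\dot{\dr}^\epsilon|_{H^s}^2+|\nabla\dr^\epsilon|_{H^s}^2$. Applying $\nabla^k$ to the regularized equation and pairing with $\rho\nabla^k\dot{\dr}^\epsilon$: using $\nabla^k\ddot{\dr}^\epsilon=\partial_t\nabla^k\dot{\dr}^\epsilon+\u\cdot\nabla\nabla^k\dot{\dr}^\epsilon+[\nabla^k,\u\cdot\nabla]\dot{\dr}^\epsilon$ and $\partial_t\rho+\div(\rho\u)=0$, the inertial term contributes exactly $\tfrac12\tfrac{\d}{\d t}\int\rho|\nabla^k\dot{\dr}^\epsilon|^2\,\d x$; using that $J_\epsilon$ is self‑adjoint and commutes with $\nabla$, together with the Friedrichs commutator bound for $[J_\epsilon,\u\cdot\nabla]$, the elliptic term contributes $-\tfrac12\tfrac{\d}{\d t}|\nabla^{k+1}J_\epsilon\dr^\epsilon|_{L^2}^2$ plus terms controlled by $|\div\u|_{L^\infty}$ and the commutator $[\nabla^{k+1},\u\cdot\nabla]\dr^\epsilon$. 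Every remaining term — the $\nabla^k$‑commutators, the cubic/quartic Lagrangian term $\nabla^k(\Gamma^\epsilon\dr^\epsilon)$, and the Leslie terms $\nabla^k(\A\dr^\epsilon),\nabla^k(\B\dr^\epsilon)$ — is bounded, via the Moser‑type inequality (Lemma \ref{Moser}), the embedding $H^s\hookrightarrow W^{1,\infty}$ ($s>\tfrac N2+1$), and the $L^\infty$‑bounds on $\rho,\tfrac1\rho$, by $g(t)\,P(\mathcal{E}^\epsilon(t))$ for a fixed polynomial $P$ with $g(t)=C\big(1+|\u|_{H^{s+1}}+|\u|_{W^{1,\infty}}+|\rho|_{\dot H^s}^2\big)\in L^1(0,T_0)$; the key point is that the top‑order parts of $\A\dr^\epsilon,\B\dr^\epsilon$ and of $[\nabla^{k+1},\u\cdot\nabla]\dr^\epsilon$ genuinely cost one derivative of $\u$ and are absorbed by $|\u|_{H^{s+1}}$. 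A continuation/comparison argument for $\tfrac{\d}{\d t}\mathcal{E}^\epsilon\le g\,P(\mathcal{E}^\epsilon)$ then yields $T\in(0,T_0]$ and $C_2>0$ — depending only on $\mathcal{E}^\epsilon(0)\lesssim|\tilde{\dr}^{in}|_{H^s}^2+|\nabla\dr^{in}|_{H^s}^2$, $\rho$ and $\u$ — with $\mathcal{E}^\epsilon\le C_2$ on $[0,T]$, so $T^\epsilon>T$.

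Finally I would pass to the limit $\epsilon\to0$: the uniform bounds give, along a subsequence, $\dot{\dr}^\epsilon\overset{\ast}{\rightharpoonup}\dot{\dr}$ and $\nabla\dr^\epsilon\overset{\ast}{\rightharpoonup}\nabla\dr$ in $L^\infty(0,T;H^s)$, while the equation bounds $\partial_t\dot{\dr}^\epsilon$ in $L^\infty(0,T;H^{s-1})$, so Aubin--Lions gives strong convergence in $C(0,T;H^{s-1}_{\mathrm{loc}})$, enough to pass to the limit in all the nonlinearities; then $\dr$ solves the unconstrained equation, and a Bona--Smith‑type argument upgrades the time regularity to $\dot{\dr}\in C(0,T;H^s)$, $\nabla\dr\in C(0,T;H^s)$ (equivalently the weighted spaces of the statement, whose weights are comparable to $1$ on $[0,T]$). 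Uniqueness follows from a lowest‑order ($L^2$‑type) energy estimate on the difference of two solutions, which obeys a linear equation, together with Gronwall. Lemma \ref{lemma-d} then gives $|\dr|=1$ on $[0,T]$ and identifies $\Gamma$ with the true Lagrangian multiplier, so $\dr$ solves \eqref{rho-u-given}-\eqref{rho-u-given-Ini}, and the asserted energy bound is the lower‑semicontinuity limit of $\mathcal{E}^\epsilon\le C_2$. The main obstacle is the uniform estimate of the previous paragraph — specifically, organizing it so that the $\rho$‑weighted time derivative closes through the continuity equation and so that the derivative loss in $\A\dr,\B\dr$ and in the transport commutators is absorbed by $|\u|_{H^{s+1}}$; this is precisely why the hypothesis is $\u\in L^1(0,T_0;H^{s+1})$ and why $g$ is only $L^1$ in time, which forces the comparison argument in place of a crude bootstrap.
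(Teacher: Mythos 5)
Your overall strategy matches the paper's: Friedrichs/Fourier mollification of the unconstrained equation, a $\rho$-weighted energy that closes via the continuity equation, a Gronwall/comparison argument exploiting $g\in L^1(0,T_0)$ to get an $\eps$-independent time, compactness to pass to the limit, and finally Lemma~\ref{lemma-d} to recover $|\dr|=1$. The choice of weight (you put $\rho$ on $\dot{\dr}$ and leave $\nabla\dr$ unweighted, the paper leaves $\dot{\dr}$ unweighted and puts $\rho^{-1}$ on $\nabla\dr$) is immaterial because $\rho,\rho^{-1}\in L^\infty_{t,x}$, and the comparison-ODE argument at the end is the same as the paper's.

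There is, however, a genuine gap in the uniform estimate. Your energy
\[
\mathcal{E}^\eps(t)=\sum_{k=0}^s\Big(\int_{\R^N}\rho|\nabla^k\dot{\dr}^\eps|^2\,\d x+|\nabla^{k+1}\dr^\eps|^2_{L^2}\Big)
\]
controls only $|\dot{\dr}^\eps|_{H^s}$ and $|\nabla\dr^\eps|_{H^s}$, i.e.\ a \emph{homogeneous} norm of $\dr^\eps$, and says nothing about $|\dr^\eps|_{L^\infty}$. But at the mollifier level the constraint $|\dr^\eps|=1$ is not preserved (the initial data $\J_\eps\dr^{in}$ is not unit-length and the regularized evolution does not propagate the constraint), yet every one of the nonlinearities you need to estimate — $\Gamma^\eps\dr^\eps$ (which contains $|\dot{\dr}^\eps|^2\dr^\eps$, $\tfrac{1}{\rho}|\nabla\dr^\eps|^2\dr^\eps$, $\tfrac{\lambda_2}{\rho}(\dr^{\eps\top}\A\dr^\eps)\dr^\eps$), and the Leslie terms $\tfrac{1}{\rho}\B\dr^\eps$, $\tfrac{1}{\rho}\A\dr^\eps$ — carries at least one undifferentiated factor of $\dr^\eps$. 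So the claim that ``every remaining term \dots\ is bounded by $g(t)\,P(\mathcal{E}^\eps(t))$'' cannot be true as stated: $P$ must also depend on $|\dr^\eps|_{L^\infty}$, which $\mathcal{E}^\eps$ does not bound. (Nor can you cheaply patch it by $L^\infty$-boundedness of the mollifier: the Fourier-cutoff $\J_\eps$ is \emph{not} uniformly bounded on $L^\infty$, and for a convolution mollifier you lose the idempotence $\J_\eps^2=\J_\eps$ that your scheme implicitly relies on.) The paper closes this exact hole by enlarging the energy with the low-frequency piece $|\dr^\eps-\J_\eps\dr^{in}|_{L^2}^2$, evolving it via the equation $\p_t\dr^\eps=\dot{\dr}^\eps-\J_\eps(\u\cdot\nabla\dr^\eps)$, and then bounding $|\dr^\eps|_{L^\infty}\le|\dr^\eps-\J_\eps\dr^{in}|_{H^2}+|\J_\eps\dr^{in}|_{L^\infty}\lesssim E_\eps^{1/2}(t)+E_\eps^{1/2}(0)+1$ (their inequality \eqref{dr-eps}). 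You need some device of this kind; without it the polynomial closure $\tfrac{\d}{\d t}\mathcal{E}^\eps\le g\,P(\mathcal{E}^\eps)$ that the rest of your argument hinges on does not hold.
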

%\begin{proposition}\label{Proposition-1}
%  Let $s>\tfrac{N}{2}+1 \ (N=2,3)$ and $T_0>0$. The given functions $(\rho(x,t), \u(x,t)) \in \R^N \times \R$ satisfy $\u \in L^1 (0,T_0; H^{s+1} (\R^N)), \ \rho \in L^1 (0,T_0; H^s (\R^N))$, $\underline{\rho} \leq \rho (x,0) \leq \bar{\rho}$ for some $\underline{\rho}, \bar{\rho} >0$, and the relation $\p_t \rho + \div (\rho \u) =0$. If $\tilde{\dr}^{in} \in H^s (\R^N)\,, \ \nabla \dr^{in} \in H^s_{\rho(\cdot,0)^{-1/2}} (\R^N)$, then there exist $0 < T \leq T_0$ and $C_2>0$, depending only on $\dr^{in}, \check{\dr}^{in}, \rho$ and $\u$, such that the system \eqref{rho-u-given}-\eqref{rho-u-given-Ini} has the unique classical solution $\dr$ satisfying $\dot{\dr} \in C (0,T; H^s(\R^N))$ and $ \nabla \dr \in C(0,T; H^s_{\rho^{-1/2}} (\R^N))$. Moreover, the energy bound
%  \begin{align*}
%    |\dot{\dr}|_{L^\infty(0,T;H^s (\R^N))}^2 + |\nabla \dr|_{L^\infty(0,T;H^s_{\rho^{-1/2}}(\R^N))} \leq C_2
%  \end{align*}
%  holds.
%\end{proposition}
\begin{proof}
  Proof of the proposition can be divided into three steps. Firstly, we construct an approximate scheme for the system \eqref{rho-u-given}-\eqref{rho-u-given-Ini} by standard mollifier methods. Secondly, we derive a uniform energy bound of the approximate system. Lastly, we take limits in the approximate system by using the arguments of compactness.

  {\it{Step 1. Construct the approximate scheme.}} We can define the mollifier operator $\mathcal{J}_{\eps}$ as
  $$ \mathcal{J}_{\eps} f = \mathcal{F}^{-1} (\mathbf{1}_{|\xi| \leq \frac{1}{\eps}} \mathcal{F}(f)) \, ,$$
  where the operator $\mathcal{F}$ is the standard Fourier transform and $\mathcal{F}^{-1}$ is the inverse Fourier transform. It is easy to check that the mollifier has the proposition that $\mathcal{J}^2_\eps = \mathcal{J}_{\eps}$. We then turn to construct the approximate scheme of \eqref{rho-u-given}-\eqref{rho-u-given-Ini} as follows:
  \begin{equation}\label{ODE}
    \begin{aligned}
      \left \{ \begin{array} {l}
        \p_t \dot{\dr}^{\eps} = - \J_\eps (\u \cdot \nabla \J_\eps \dot{\dr}^\eps) + \J_\eps (\tfrac{1}{\rho} \Delta \J_\eps \dr^\eps) + \J_\eps (\tfrac{1}{\rho} \Gamma(\rho, \u, \J_\eps \dr^\eps, \J_\eps \dot{\dr}^\eps) \J_\eps \dr^\eps) \\
        \qquad \qquad \qquad \qquad \qquad + \lambda_1 \J_\eps (\tfrac{1}{\rho} (\B \J_\eps \dr^\eps + \J_\eps \dot{\dr}^\eps) ) + \lambda_2 \J_\eps (\tfrac 1 \rho (\A \J_\eps \dr^\eps) ) \, ,  \\
        \p_t \dr^\eps = \dot{\dr}^\eps - \J_\eps (\u \cdot \nabla \J_\eps \dr^\eps) \, , \\
        (\dr^\eps, \dot{\dr}^\eps)|_{t=0} = ( \J_\eps \dr^{in}, \J_\eps \tilde{\dr}^{in}) \, .
      \end{array}
      \right.
    \end{aligned}
  \end{equation}
Then by the ODE theory, we can have that there exists a maximal time $T_\eps > 0$, depending only on $\dr^{in}$, $\tilde{\dr}^{in}$, $\rho$,  $\u$ and $T_0$, such that the approximate system \eqref{ODE} admits a unique solution $\dr^\eps \in C([0, T_\eps); H^{s+1}(\mathbb{R}^N))$ and $\dot{\dr}^\eps \in C([0, T_\eps); H^{s}(\mathbb{R}^N))$. We remark that $T_\eps < T_0$ for all $\eps>0$, which is determined by the regularity of $\rho$ and $\u$. Noticing the fact that $\J^2_\eps = \J_\eps$, we have $(\J_\eps \dr^\eps, \J_\eps \dot{\dr}^\eps)$ is also a solution to the system \eqref{ODE}. Then we can immediately have
\begin{equation}\label{Pro-Step-1-1}
  (\J_\eps \dr^\eps, \J_\eps \dot{\dr}^\eps) = (\dr^\eps, \dot{\dr}^\eps)
\end{equation}
by the uniqueness of solution. Thus substituting the relation \eqref{Pro-Step-1-1} into the system \eqref{ODE}, we have
\begin{equation}\label{ODE-2}
  \begin{aligned}
    \left \{ \begin{array}{l}
      \p_t \dot{\dr}^{\eps} = - \J_\eps (\u \cdot \nabla \dot{\dr}^\eps) + \J_\eps (\tfrac{1}{\rho} \Delta \dr^\eps) + \J_\eps (\tfrac{1}{\rho} \Gamma(\rho, \u, \dr^\eps, \dot{\dr}^\eps) \dr^\eps) \\[1.5mm]
      \qquad \qquad \qquad \qquad \qquad + \lambda_1 \J_\eps (\tfrac{1}{\rho} (\B \dr^\eps + \dot{\dr}^\eps) ) + \lambda_2 \J_\eps (\tfrac 1 \rho (\A \dr^\eps) ) \, ,  \\[1.5mm]
      \p_t \dr^\eps = \dot{\dr}^\eps - \J_\eps (\u \cdot \nabla \dr^\eps) \, , \\[1.5mm]
      (\dr^\eps, \dot{\dr}^\eps)|_{t=0} = ( \J_\eps \dr^{in}, \J_\eps \tilde{\dr}^{in}) \, .
    \end{array}
    \right.
  \end{aligned}
\end{equation}

{\it{Step 2. Uniform energy estimate.}}  With the relation \eqref{Pro-Step-1-1} in hand, and by using H\"older inequality and Sobolev embedding theory, we want to derive the uniform energy bounds for the system \eqref{ODE-2}.

Firstly, we calculate the $L^2$-estimate of the approximate system \eqref{ODE-2}. Multiplying \eqref{ODE-2}$_1$ with $\dot{\dr}^\eps$ and integrating respect with $x$ over $\mathbb{R}^N$, we have
  \begin{align}\label{eps-1}
     \no \tfrac{1}{2} \tfrac{\d}{\d t} |\dot{\dr}^\eps|^2_{L^2} = & \l \u \cdot \nabla \dot{\dr}^\eps, \dot{\dr}^\eps \r + \l \tfrac{1}{\rho} \Delta \dr^\eps, \dot{\dr}^\eps \r + \l \tfrac{1}{\rho} \Gamma(\rho, \u, \dr^\eps, \dot{\dr}^\eps) \dr^\eps, \dot{\dr}^\eps \r \\
     \no & \qquad + \l \tfrac{1}{\rho} (\lambda_1 \B \dr^\eps + \lambda_2 \A \dr^\eps), \dot{\dr}^\eps \r + \l \tfrac{1}{\rho} (\lambda_1 \dot{\dr}^\eps), \dot{\dr}^\eps \r \\
     \equiv & I^\eps_1 + I^\eps_2 + I^\eps_3 + I^\eps_4 + I^\eps_5 \, .
  \end{align}
We now estimate the terms on the righthand side of \eqref{eps-1} term by term.

Integrating by parts, we can easily deduce
\begin{equation}\label{eps-I-1}
  I^\eps_1 = \tfrac 1 2 \l \div \u, |\dr^\eps|^2 \r \leq \tfrac 1 2 |\nabla \u|_{L^\infty} |\dr^\eps|^2_{L^2} \, .
\end{equation}

For the second term $I^\eps_2$, by using the fact that $\dot{\dr}^\eps = \p_t \dr^\eps + \u \cdot \nabla \dr^\eps $ and the first equation of \eqref{compressible-Liquid-Crystal-Model}, we can infer that
  \begin{align}\label{eps-I-2}
    \no I^\eps_2 = & - \l \nabla \dr^\eps, \nabla (\tfrac 1 \rho) \dot{\dr}^\eps + (\tfrac 1 \rho) \nabla \dot{\dr}^\eps \r \\
    \no = & - \tfrac 1 2 \tfrac{\d}{\d t} \int_{\mathbb{R}^N} (\tfrac 1 \rho)|\nabla \dr^\eps|^2 \d x + \tfrac 1 2 \l (\tfrac 1 \rho)_t + \div (\tfrac \u \rho), |\nabla \dr^\eps|^2 \r - \l \nabla \dr^\eps, \nabla (\tfrac 1 \rho) \dot{\dr}^\eps + \tfrac{\nabla \u}{\rho} \nabla \dr^\eps \r \\
    \no = & - \tfrac 1 2 \tfrac{\d}{\d t} \int_{\mathbb{R}^N} (\tfrac 1 \rho)|\nabla \dr^\eps|^2 \d x +  \l \tfrac{\div \u}{\rho}, |\nabla \dr^\eps|^2 \r - \l \nabla \dr^\eps, \nabla (\tfrac 1 \rho) \dot{\dr}^\eps + \tfrac{\nabla \u}{\rho} \nabla \dr^\eps \r \\
    \leq & - \tfrac 1 2 \tfrac{\d}{\d t} \int_{\mathbb{R}^N} (\tfrac 1 \rho)|\nabla \dr^\eps|^2 \d x + C |\nabla (\tfrac 1 \rho)|_{L^\infty} |\nabla \dr^\eps|_{L^2} |\dot{\dr}^\eps|_{L^2} + |\tfrac{\nabla \u + \div \u}{\rho}|_{L^\infty} |\nabla \dr^\eps|^2_{L^2} \, .
  \end{align}

Recalling the structure of Lagrangian $\Gamma(\rho, \u, \dr^\eps, \dot{\dr}^\eps)$, we have
  \begin{align}\label{eps-I-3}
    \no I^\eps_3 = &  \l -|\dot{\dr}^\eps|^2 \dr^\eps + \tfrac 1 \rho |\nabla \dr^\eps|^2 \dr^\eps - \tfrac{\lambda_2}{\rho} ({\dr^\eps}^\top \A \dr^\eps) \dr^\eps, \dot{\dr}^\eps \r \\
    \leq & C |\dr^\eps|_{L^\infty} |\dot{\dr}^\eps|_{L^\infty} (|\tfrac 1 \rho|_{L^\infty} |\nabla \dr^\eps|^2_{L^2} + |\dot{\dr}^\eps|^2_{L^2}) + |\lambda_2| |\tfrac 1 \rho|_{L^\infty} |\nabla \u|_{L^2} |\dr^\eps|^3_{L^\infty} |\dot{\dr}^\eps|_{L^2}
  \end{align}
by using H\"older inequality.

For the term $I^\eps_4$ and $I^\eps_5$, it is easy to derive that
\begin{equation}\label{eps-I-4}
  \begin{aligned}
    I^\eps_4 \leq ( |\lambda_1| + |\lambda_2| ) |\tfrac 1 \rho|_{L^\infty} |\nabla \u|_{L^2} |\dr^\eps|_{L^\infty} |\dot{\dr}^\eps|_{L^2}
  \end{aligned}
\end{equation}
and
\begin{equation}\label{eps-I-5}
  I^\eps_5 = \lambda_1 \int_{\mathbb{R}^N} \tfrac 1 \rho |\dot{\dr}^\eps|^2 \d x \, .
\end{equation}

Combining with the above estimates \eqref{eps-I-1}-\eqref{eps-I-5} infers that
\begin{align}\label{eps-Basic-Energy}
  \no & \tfrac 1 2 \tfrac{\d}{\d t} \int_{\mathbb{R}^N} |\dot{\dr}^\eps|^2 + \tfrac 1 \rho |\nabla \dr^\eps|^2 \d x - \lambda_1 \int_{\mathbb{R}^N} \tfrac 1 \rho |\dot{\dr}^\eps|^2 \d x \\
  \no \leq & \tfrac 1 2 |\nabla \u|_{L^\infty} |\dr^\eps|^2_{L^2} + (|\lambda_1| + |\lambda_2|) |\tfrac 1 \rho|_{L^\infty} |\nabla \u|_{L^2} |\dr^\eps|_{L^\infty} |\dot{\dr}^\eps|_{L^2} \\
  \no & + C |\nabla (\tfrac 1 \rho)|_{\infty} |\nabla \dr^\eps|_{L^2} |\dot{\dr}^\eps|_{L^2} + |\lambda_2| |\tfrac 1 \rho|_{L^\infty} |\nabla \u|_{L^2} |\dr^\eps|^3_{L^\infty} |\dot{\dr}^\eps|_{L^2} \\
  & + C |\dr^\eps|_{L^\infty} |\dot{\dr}^\eps|_{L^\infty} (|\tfrac 1 \rho|_{L^\infty} |\nabla \dr^\eps|^2_{L^2} + |\dot{\dr}^\eps|^2_{L^2}) + |\tfrac{1}{\rho}|_{L^\infty} |\nabla \u|_{L^\infty} |\nabla \dr^\eps|^2_{L^2}
\end{align}

Secondly, we concern with the higher order energy estimate of the approximate system \eqref{ODE-2}. Acting the $k$-order $(1 \leq k \leq s)$ derivative operator $\nabla^k$ on \eqref{ODE-2}$_1$ and taking $L^2$-inner product by multiplying $\nabla^k \dot{\dr}^\eps$, then integrating by parts, we can obtain
\begin{align}\label{eps-k}
  \no & \tfrac 1 2 \tfrac{\d}{\d t} |\nabla^k \dot{\dr}^\eps|^2_{L^2}
  = \l \nabla^k (\u \cdot \dot{\dr}^\eps), \nabla^k \dot{\dr}^\eps \r +\l \nabla^k (\tfrac{1}{\rho} \Delta \dr^\eps), \nabla^k \dot{\dr}^\eps \r \\
  \no & + \l \nabla^k (\tfrac{1}{\rho} \Gamma(\rho, \u, \dr^\eps, \dot{\dr}^\eps) \dr^\eps), \nabla^k \dot{\dr}^\eps \r + \l \nabla^k (\tfrac 1 \rho (\lambda_1 \B \dr^\eps + \lambda_2 \A \dr^\eps)), \nabla^k \dot{\dr}^\eps \r \\
  & + \l \nabla^k (\tfrac{\lambda_2}{\rho} \A \dr^\eps), \nabla^k \dot{\dr}^\eps \r
  \equiv \tilde{I}^\eps_1 + \tilde{I}^\eps_2 + \tilde{I}^\eps_3 + \tilde{I}^\eps_4 + \tilde{I}^\eps_5 \, .
\end{align}
We now turn to estimate the terms $\tilde{I}^\eps_j (j=1,...,5)$ one by one.

For the term of $\tilde{I}^\eps_1$, we have
\begin{align}\label{eps-k-I-1}
  \no \tilde{I}^\eps_1 = & - \tfrac{1}{2} \l \div \u, |\nabla^k \dot{\dr}^\eps|^2 \r + \l \nabla \u \cdot \nabla^k \dot{\dr}^\eps, \nabla^k \dot{\dr}^\eps \r + \sum_{\substack{a+b=k,\\ a \geq 2}} \l \nabla^a \u \nabla^{b+1} \dot{\dr}^\eps, \nabla^k \dot{\dr}^\eps \r \\
  \no \lesssim & |\nabla \u|_{L^\infty} |\nabla^k \dot{\dr}^\eps|^2_{L^2} + \sum_{\substack{a+b=k,\\ a \geq 2}} |\nabla^a \u|_{L^4} |\nabla^{b+1} \dot{\dr}^\eps|_{L^4} |\nabla^k \dot{\dr}^\eps|_{L^2} \\
  \lesssim & |\nabla \u|_{H^s} |\dot{\dr}^\eps|^2_{H^s}
\end{align}
by taking advantage of H\"older inequality and Sobolev embedding theory.

We then turn to deal with the term $\tilde{I}^\eps_2$. We can divide $\tilde{I}^\eps_2$ into the following two parts
\begin{align*}
  \tilde{I}^\eps_2 = \l \tfrac{1}{\rho} \nabla^k \Delta \dr^\eps, \nabla^k \dot{\dr}^\eps \r + \sum_{\substack{a+b=k,\\ a \geq 1}} \l \nabla^a (\tfrac{1}{\rho}) \nabla^b \Delta \dr^\eps, \nabla^k \dot{\dr}^\eps \r = \tilde{I}^\eps_{21} + \tilde{I}^\eps_{22} \, .
\end{align*}
We now calculate $\tilde{I}^\eps_{21}$ and $\tilde{I}^\eps_{22}$, respectively. By using H\"older inequality, Sobolev embedding theory and the first equation of \eqref{compressible-Liquid-Crystal-Model}, we can derive that
\begin{align}\label{eps-k-I-21}
   \no & \tilde{I}^\eps_{21} = - \tfrac 1 2 \tfrac{\d}{\d t} \int_{\mathbb{R}^N} (\tfrac{1}{\rho}) |\nabla^{k+1} \dr^\eps|^2 \d x + \tfrac 1 2 \l \p_t (\tfrac{1}{\rho}) + \div (\tfrac{\u}{\rho}), |\nabla^{k+1} \dr^\eps|^2 \r - \l \nabla^{k+1} \dr^\eps, \tfrac{\nabla \u}{\rho} \nabla^{k+1} \dr^\eps \r \\
   & \qquad - \l \nabla^{k+1} \dr^\eps, \nabla (\tfrac 1 \rho) \nabla^k \dot{\dr}^\eps \r + \sum_{\substack{a+b=k+1,\\ a \geq 2}} \l (\tfrac 1 \rho) \nabla^a \u  \nabla^{b+1} \dr^\eps, \nabla^{k+1} \dr^\eps \r \\
   \no & \leq - \tfrac 1 2 \tfrac{\d}{\d t} \int_{\mathbb{R}^N} (\tfrac{1}{\rho}) |\nabla^{k+1} \dr^\eps|^2 \d x + 2 |\tfrac 1 \rho|_{L^\infty} |\nabla \u|_{L^\infty} |\nabla^{k+1} \dr^\eps|^2_{L^2} + |\nabla(\tfrac 1 \rho)|_{L^\infty} |\nabla^{k+1} \dr^\eps|_{L^2} |\nabla^k \dot{\dr}^\eps|_{L^2} \\
   \no & \quad + |\tfrac 1 \rho|_{L^\infty} |\nabla \dr^\eps|_{L^\infty} |\nabla^{k+1} \dr^\eps|_{L^2} |\nabla^{k+1} \u|_{L^2} + \sum_{\substack{a+b=k+1,\\ 2 \leq a \leq k}} |\tfrac 1 \rho|_{L^\infty} |\nabla^{k+1} \dr^\eps|_{L^2} |\nabla^{a+1} \u|_{L^4} |\nabla^{b+1} \dr^\eps|_{L^4} \\
   \no & \leq - \tfrac 1 2 \tfrac{\d}{\d t} \int_{\mathbb{R}^N} (\tfrac{1}{\rho}) |\nabla^{k+1} \dr^\eps|^2 \d x + C |\tfrac 1 \rho|_{L^\infty} |\nabla \u|_{H^s} |\nabla \dr^\eps|^2_{H^s} + C  |\nabla (\tfrac 1 \rho)|_{L^\infty} |\nabla \dr^\eps|_{H^s} |\dot{\dr}^\eps|_{H^s}
\end{align}
and
\begin{align}\label{eps-k-I-22}
  \no \tilde{I}^\eps_{22} = & \l \nabla(\tfrac 1 \rho) \nabla^{k-1} \Delta \dr^\eps, \nabla^k \dot{\dr}^\eps \r + \l \nabla^k (\tfrac 1 \rho) \Delta \dr^\eps, \nabla^k \dot{\dr}^\eps \r + \sum_{\substack{a+b=k, \\ 2 \leq a \leq k-1}} \l \nabla^a (\tfrac 1 \rho) \nabla^b \Delta \dr^\eps, \nabla^k \dot{\dr}^\eps \r \\
  \no \leq & |\nabla (\tfrac 1 \rho)|_{L^\infty} |\nabla^{k+1} \dr^\eps|_{L^2} |\nabla^k \dot{\dr}^\eps|_{L^2} + |\nabla^k (\tfrac 1 \rho)|_{L^2} |\Delta \dr^\eps|_{L^\infty} |\nabla^k \dot{\dr}^\eps|_{L^2} \\
  \no & + \sum_{\substack{a+b=k, \\ 2 \leq a \leq k-1}} |\nabla^a (\tfrac 1 \rho)|_{L^4} |\nabla^{b+2} \dr^\eps|_{L^4} |\nabla^k \dot{\dr}^\eps|_{L^2} \\
  \lesssim & |\tfrac 1 \rho|_{\dot{H}^s} |\nabla \dr^\eps|_{H^s} |\dot{\dr}^\eps|_{H^s} \, .
\end{align}
Combining with the estimates \eqref{eps-k-I-21} and \eqref{eps-k-I-22}, we obtain
\begin{equation}\label{eps-k-I-2}
  \begin{aligned}
    \tilde{I}^\eps_2 \leq - \tfrac 1 2 \tfrac{\d}{\d t} \int_{\mathbb{R}^N} (\tfrac{1}{\rho}) |\nabla^{k+1} \dr^\eps|^2 \d x + C |\tfrac 1 \rho|_{L^\infty} |\nabla \u|_{H^s} |\nabla \dr^\eps|^2_{H^s} + C |\tfrac 1 \rho|_{\dot{H}^s} |\nabla \dr^\eps|_{H^s} |\dot{\dr}^\eps|_{H^s} \, .
  \end{aligned}
\end{equation}

For the estimate of the term $\tilde{I}^\eps_3$, by the structure of $\Gamma(\rho, \u, \dr^\eps, \dot{\dr}^\eps)$, we know that
\begin{align*}
  \tilde{I}^\eps_3 = & \l \nabla^k (- |\dot{\dr}^\eps|^2 \dr^\eps), \nabla^k \dot{\dr}^\eps \r + \l \nabla^k (\tfrac 1 \rho |\nabla \dr^\eps|^2 \dr^\eps), \nabla^k \dot{\dr}^\eps \r - \l \nabla^k ( \tfrac{\lambda_2}{\rho} ( {\dr^\eps}^\top \A \dr^\eps ) \dr^\eps), \nabla^k \dot{\dr}^\eps \r \\
  \equiv & \tilde{I}^\eps_{31} + \tilde{I}^\eps_{32} + \tilde{I}^\eps_{33} \, .
\end{align*}
Taking advantage of H\"older inequality and Sobolev embedding theory infers that
\begin{align}\label{eps-k-I-31}
  \no \tilde{I}^\eps_{31} \leq & |\dr^\eps|_{L^\infty} \sum_{a+b=k} \l |\nabla^a \dot{\dr}^\eps| |\nabla^b \dot{\dr}^\eps|, |\nabla^k \dot{\dr}^\eps| \r + \sum_{\substack{a+b+c=k,\\ c \geq 1}} \l |\nabla^a \dot{\dr}^\eps| |\nabla^b \dot{\dr}^\eps| |\nabla^c \dr^\eps|, |\nabla^k \dot{\dr}^\eps| \r \\
  \no \leq & 2 |\dr^\eps|_{L^\infty} |\dot{\dr}^\eps|_{L^\infty} |\nabla^k \dot{\dr}^\eps|^2_{L^2} + |\dr^\eps|_{L^\infty} \sum_{\substack{a+b=k, \\ a, b \geq 1}} |\nabla^a \dot{\dr}^\eps|_{L^4} |\nabla^b \dot{\dr}^\eps|_{L^4} |\nabla^k \dot{\dr}^\eps|_{L^2} \\
  \no & + |\dot{\dr}^\eps|_{L^\infty} |\dot{\dr}^\eps|_{L^4} |\nabla^k \dr^\eps|_{L^4} |\nabla^k \dot{\dr}^\eps|_{L^2} + \sum_{\substack{a+b+c=k, \\ 1 \leq c \leq k-1}} |\nabla^c \dr^\eps|_{L^\infty} |\nabla^a \dot{\dr}^\eps|_{L^4} |\nabla^b \dot{\dr}^\eps|_{L^2} |\nabla^k \dot{\dr}^\eps|_{L^2} \\
  \lesssim & ( |\dr^\eps|_{L^\infty} + |\nabla \dr^\eps|_{H^s} ) |\dot{\dr}^\eps|^3_{H^s} \, .
\end{align}
Similar as the above calculation, we have
\begin{align}\label{eps-k-I-32}
  \no \tilde{I}^\eps_{32} & \lesssim |\nabla^{k-1} (\tfrac{1}{\rho})|_{L^4} (|\nabla^2 \dr^\eps|_{L^4} |\nabla \dr^\eps|_{L^\infty} |\dr^\eps|_{L^\infty} + |\nabla \dr^\eps|_{L^4} |\nabla \dr^\eps|_{L^\infty}^2) |\nabla^k \dot{\dr}^\eps|_{L^2} \\
  \no & + \sum_{\substack{a+b+c=k,\\ 1 \leq c \leq k-2}} \sum_{a_1 + a_2 =a} |\nabla^{a_1+1} \dr^\eps|_{L^4} |\nabla^{a_2+1} \dr^\eps|_{L^4} |\nabla^b \dr^\eps|_{L^\infty} |\nabla^c (\tfrac{1}{\rho})|_{L^\infty} |\nabla^k \dot{\dr}^\eps|_{L^2} \\
  \no & + |\tfrac 1 \rho|_{L^\infty} ( |\nabla \dr^\eps|^2_{L^\infty} |\nabla^k \dr^\eps|_{L^2} + |\dr^\eps|_{L^\infty} |\nabla \dr^\eps|_{L^\infty} |\nabla^{k+1} \dr^\eps|_{L^2} ) |\nabla^k \dot{\dr}^\eps|_{L^2} \\
  \no & +\! |\nabla^k (\tfrac 1 \rho)|_{L^2} |\nabla \dr^\eps|^2_{L^\infty} |\dr^\eps|_{L^\infty} |\nabla^k \dot{\dr}^\eps|_{L^2} \!+\! \!\sum_{\substack{a+b=k, \\ 1 \leq a \leq k-1}}\!\! |\tfrac 1 \rho|_{L^\infty} |\dr^\eps|_{L^\infty} |\nabla^{a+1} \dr^\eps|_{L^4} |\nabla^{b+1} \dr^\eps|_{L^4} |\nabla^k \dot{\dr}^\eps|_{L^2}  \\
  \no & + \sum_{\substack{a+b+c=k, \\1 \leq a, b, c \leq k-1}} |\tfrac 1 \rho|_{L^\infty} |\nabla^{a+1} \dr^\eps|_{L^4} |\nabla^{b+1} \dr^\eps|_{L^4} |\nabla^c \dr^\eps|_{L^\infty} |\nabla^k \dot{\dr}^\eps|_{L^2} \\
  \lesssim & ( |\tfrac 1 \rho|_{L^\infty} + |\tfrac 1 \rho|_{\dot{H}^s} ) (|\dr^\eps|_{L^\infty} + |\nabla \dr^\eps|_{H^s}) |\nabla \dr^\eps|^2_{H^s} |\dot{\dr}^\eps|_{H^s} \, ,
\end{align}
and
\begin{align}\label{eps-k-I-33}
  \no \tilde{I}^\eps_{33} \leq & |\lambda_2| |\tfrac 1 \rho|_{L^\infty} \sum_{a+b+c+e=k} \l |\nabla^a \dr^\eps \nabla^b \dr^\eps \nabla^c \dr^\eps| |\nabla^{e+1} \u|, |\nabla^k \dot{\dr}^\eps| \r \\
  \no & + |\lambda_2| |\nabla^k (\tfrac 1 \rho)|_{L^2} |\dot{\dr}^\eps|^3_{L^\infty} |\nabla \u|_{L^\infty} |\nabla^k \dot{\dr}^\eps|_{L^2} \\
  \no & + |\lambda_2| \sum_{\substack{a+b+c=k, \\ 1 \leq c \leq k-1}} \sum_{a_1 + a_2 +a_3 = a} \l |\nabla^c (\tfrac 1 \rho)| |\nabla^{a_1} \dr^\eps| |\nabla^{a_2} \dr^\eps| |\nabla^{a_3} \dr^\eps| |\nabla^{b+1} \u|, |\nabla^k \dot{\dr}^\eps| \r \\
  \lesssim & |\lambda_2| (|\tfrac 1 \rho|_{L^\infty} + |\tfrac 1 \rho|_{\dot{H}^s}) |\nabla \u|_{H^s} (|\dr^\eps|_{L^\infty} + |\nabla \dr^\eps|_{H^s})^3 |\dot{\dr}^\eps|_{H^s} \, .
\end{align}
With the above estimates \eqref{eps-k-I-31}-\eqref{eps-k-I-33} in hand, we have
\begin{align}\label{eps-k-I-3}
  \no & \l \nabla^k (\tfrac 1 \rho \Gamma (\rho, \u, \dr^\eps, \dot{\dr}^\eps) \dr^\eps), \nabla^k \dot{\dr}^\eps \r
  \lesssim ( |\dr^\eps|_{L^\infty} + |\nabla \dr^\eps|_{H^s} ) |\dot{\dr}^\eps|^3_{H^s} \\
  \no & \qquad \quad \quad +  ( |\tfrac 1 \rho|_{L^\infty} + |\tfrac 1 \rho|_{\dot{H}^s} ) (|\dr^\eps|_{L^\infty} + |\nabla \dr^\eps|_{H^s}) |\nabla \dr^\eps|^2_{H^s} |\dot{\dr}^\eps|_{H^s} \\
  & \qquad \quad \quad +  |\lambda_2| (|\tfrac 1 \rho|_{L^\infty} + |\tfrac 1 \rho|_{\dot{H}^s}) |\nabla \u|_{H^s} (|\dr^\eps|_{L^\infty} + |\nabla \dr^\eps|_{H^s})^3 |\dot{\dr}^\eps|_{H^s} \, .
\end{align}

Then we estimate the last two parts of the right-hand side of \eqref{eps-k}, by using H\"older inequality and Sobolev embedding theory, we can infer that
\begin{align} \label{eps-k-I-4}
  \no \tilde{I}^\eps_4 \leq & (|\lambda_1| + |\lambda_2|) |\nabla^k (\tfrac 1 \rho)|_{L^2} |\nabla \u|_{L^\infty} |\dr^\eps|_{L^\infty} |\nabla^k \dot{\dr}^\eps|_{L^2} \\
  \no & + (|\lambda_1| + |\lambda_2|) |\tfrac 1 \rho|_{L^\infty} \sum_{a+b=k} \l |\nabla^{a+1} \u| |\nabla^b \dr^\eps|, |\nabla^k \dot{\dr}^\eps| \r \\
  \no & + (|\lambda_1| + |\lambda_2|) \sum_{\substack{a+b+c=k,\\ 1 \leq a \leq k-1}} \l |\nabla (\tfrac 1 \rho)| |\nabla^{b+1} \u| |\nabla^c \dr^\eps|, |\nabla^k \dot{\dr}^\eps| \r \\
  \lesssim & (|\lambda_1| + |\lambda_2|) (|\tfrac 1 \rho|_{L^\infty} + |\tfrac 1 \rho|_{\dot{H}^s}) |\nabla \u|_{H^s} (|\dr^\eps|_{L^\infty} + |\nabla \dr^\eps|_{H^s}) |\dot{\dr}^\eps|_{H^s} \, ,
\end{align}
and
\begin{align} \label{eps-k-I-5}
  \no \tilde{I}^\eps_5 \leq & \int_{\mathbb{R}^N} \tfrac{\lambda_1}{\rho} |\nabla^k \dot{\dr}^\eps|^2 \d x + |\lambda_1| |\nabla^k (\tfrac 1 \rho)|_{L^2} |\dot{\dr}^\eps|_{L^\infty} |\nabla^k \dot{\dr}^\eps|_{L^2} \\
  \no & + |\lambda_1| \sum_{\substack{a+b=k,\\a,b \geq 1}} |\nabla^a (\tfrac 1 \rho)|_{L^4} |\nabla^b \dot{\dr}^\eps|_{L^4} |\nabla^k \dot{\dr}^\eps|_{L^2} \\
  \leq & \int_{\mathbb{R}^N} \tfrac{\lambda_1}{\rho} |\nabla^k \dot{\dr}^\eps|^2 \d x + C |\lambda_1| |\tfrac 1 \rho|_{\dot{H}^s} |\dot{\dr}^\eps|^2_{H^s} \, .
\end{align}

Consequently, combining with the estimates \eqref{eps-k-I-1}, \eqref{eps-k-I-2}, \eqref{eps-k-I-3}, \eqref{eps-k-I-4} and \eqref{eps-k-I-5}, we have the following higher order estimate:
\begin{align} \label{eps-Higher-Energy}
  \no & \tfrac 1 2 \tfrac{\d}{\d t} \int_{\mathbb{R}^N} |\nabla^k \dot{\dr}^\eps|^2 + \tfrac 1 \rho |\nabla^{k+1} \dr^\eps|^2 \d x - \int_{\mathbb{R}^N} \tfrac{\lambda_1}{\rho} |\nabla^k \dot{\dr}^\eps|^2 \d x \\
  \no \lesssim & |\nabla \u|_{H^s} |\dot{\dr}^\eps|^2_{H^s} + |\tfrac 1 \rho|_{\dot{H}^s} (|\nabla \dr^\eps|_{H^s} + |\lambda_1| |\dot{\dr}^\eps|_{H^s}) |\dot{\dr}^\eps|_{H^s} + ( |\dr^\eps|_{L^\infty} + |\nabla \dr^\eps|_{H^s} ) |\dot{\dr}^\eps|^3_{H^s} \\
  \no & + |\tfrac{1}{\rho}|_{L^\infty} |\nabla \u|_{H^s} |\nabla \dr^\eps|^2_{H^s} \!+\! (1 \!+\! |\lambda_2|) ( |\tfrac 1 \rho|_{L^\infty} \!+\! |\tfrac 1 \rho|_{\dot{H}^s} ) (1 \!+\! |\nabla \u|_{H^s}) (|\dr^\eps|_{L^\infty} \!+\! |\nabla \dr^\eps|_{H^s})^3 |\dot{\dr}^\eps|_{H^s} \\
  & + (|\lambda_1| + |\lambda_2|) (|\tfrac 1 \rho|_{L^\infty} + |\tfrac 1 \rho|_{\dot{H}^s}) |\nabla \u|_{H^s} (|\dr^\eps|_{L^\infty} + |\nabla \dr^\eps|_{H^s}) |\dot{\dr}^\eps|_{H^s} \, .
\end{align}
Then the basic energy estimate \eqref{eps-Basic-Energy} and higher order estimate \eqref{eps-Higher-Energy} give us
  \begin{align}\label{eps-Energy-Est}
    \no & \tfrac 1 2 \tfrac{\d}{\d t} \sum_{k=0}^{s} \int_{\mathbb{R}^N} |\nabla^k \dot{\dr}^\eps|^2 + \tfrac 1 \rho |\nabla^{k+1} \dr^\eps|^2 \d x - \lambda_1 \sum_{k=0}^{s} \int_{\mathbb{R}^N} \tfrac 1 \rho |\nabla^k \dot{\dr}^\eps|^2 \d x \\
    \no \lesssim & |\nabla \u|_{H^s} |\dot{\dr}^\eps|^2_{H^s} + |\tfrac 1 \rho|_{\dot{H}^s} (|\nabla \dr^\eps|_{H^s} + |\lambda_1| |\dot{\dr}^\eps|_{H^s}) |\dot{\dr}^\eps|_{H^s} + ( |\dr^\eps|_{L^\infty} + |\nabla \dr^\eps|_{H^s} ) |\dot{\dr}^\eps|^3_{H^s} \\
  \no & + |\tfrac{1}{\rho}|_{L^\infty} |\nabla \u|_{H^s} |\nabla \dr^\eps|^2_{H^s} \!+\! (1 \!+\! |\lambda_2|) ( |\tfrac 1 \rho|_{L^\infty} \!+\! |\tfrac 1 \rho|_{\dot{H}^s} ) (1 \!+\! |\nabla \u|_{H^s}) (|\dr^\eps|_{L^\infty} \!+\! |\nabla \dr^\eps|_{H^s})^3 |\dot{\dr}^\eps|_{H^s} \\
  & + (|\lambda_1| + |\lambda_2|) (|\tfrac 1 \rho|_{L^\infty} + |\tfrac 1 \rho|_{\dot{H}^s}) |\nabla \u|_{H^s} (|\dr^\eps|_{L^\infty} + |\nabla \dr^\eps|_{H^s}) |\dot{\dr}^\eps|_{H^s} \, .
  \end{align}

 Noticing that $|\dr^\eps|_{L^\infty}$ in the right-hand side of the $H^s$-energy estimate \eqref{eps-Energy-Est} is uncontrolled by now. We now try to control it. Obviously, it holds that
 \begin{align*}
   |\dr^\eps|_{L^\infty} \leq & |\dr^\eps - \J_\eps \dr^{in}|_{L^\infty} + |\J_\eps \dr^{in}|_{L^\infty} \leq |\dr^\eps - \J_\eps \dr^{in}|_{H^2} + 1 \\
   \leq & |\dr^\eps - \J_\eps \dr^{in}|_{L^2} + |\nabla \dr^\eps|_{H^1} + |\nabla \dr^{in}|_{H^1} + 1 \, .
 \end{align*}
 And by using the second equation of system \eqref{ODE-2} and Sobolev embedding theory, we can derive that
 \begin{align*}
   & \tfrac 1 2 \tfrac{\d}{\d t} |\dr^\eps - \J_\eps \dr^{in}|^2_{L^2} = \l \dr^\eps - \J_\eps \dr^{in}, \dot{\dr}^\eps - \J_\eps (\u \cdot \nabla \dr^\eps) \r \\
   \leq & |\dr^\eps - \J_\eps \dr^{in}|_{L^2} |\dot{\dr}^\eps|_{L^2} + |\u|_{H^2} |\dr^\eps - \J_\eps \dr^{in}|_{L^2} |\nabla \dr^\eps|_{L^2} \, .
 \end{align*}

 The $L^2$-norm estimate \eqref{eps-Basic-Energy} and higher order estimate \eqref{eps-Higher-Energy} tell us that we can define the free energy functional as
 \begin{equation}
   E_\eps (t) = |\dot{\dr}^\eps|^2_{H^s} + \sum_{k=0}^{s} |\tfrac{1}{\sqrt{\rho}} \nabla^{k+1} \dr^\eps|^2_{L^2} + |\dr^\eps - \J_\eps \dr^{in}|^2_{L^2} \, ,
 \end{equation}
 and dissipative energy functional as
 \begin{equation}
   D_\eps (t) = - \lambda_1 \sum_{k=0}^{s} |\tfrac{1}{\sqrt{\rho}} \nabla^k \dot{\dr}^\eps|^2_{L^2} \, .
 \end{equation}
 Then we can easy get
 \begin{align}\label{dr-eps-0}
   \tfrac 1 2 \tfrac{\d}{\d x} |\dr^\eps - \J_\eps \dr^{in}|^2_{L^2} \leq (1 + |\u|_{H^2} |\rho|^{\frac{1}{2}}_{L^\infty} ) E_\eps (t) \, ,
 \end{align}
 meanwhile, the bound
 \begin{equation}\label{dr-eps}
   |\dr^\eps|_{L^\infty} \leq (1 + |\rho|^{\frac{1}{2}}_{L^\infty}) E_\eps^{\frac{1}{2}} (t) + E_\eps^{\frac{1}{2}} (0) + 1 .
 \end{equation}
Then the inequalities \eqref{eps-Energy-Est}, \eqref{dr-eps-0} and \eqref{dr-eps} imply that there exists a positive constant $C_2$, depending only on $\lambda_1$, $\lambda_2$ and $\dr^{in}$, such that for all $\eps > 0$ the inequality
\begin{equation}\label{eps-Energy-Est-0}
  \tfrac{\d}{\d t} E_\eps (t) + D_\eps (t) \leq C_2 (1 + |\tfrac 1 \rho|_{L^\infty} + |\tfrac 1 \rho|_{\dot{H}^s}) (1 + |\rho|_{L^\infty}^{\frac{1}{2}})^3 (1 + |\u|_{H^{s+1}}) \sum_{l=1}^{4} E_\eps^{\frac{l}{2}} (t)
\end{equation}
holds for all $t \in[0, T_\eps)$.

Finally, based on the estimate \eqref{eps-Energy-Est-0}, we then try to get the uniform bounds of the energy functional $E_{\eps} (t)$ by using Gronwall arguments. Noticing that
$$
E_{\eps} (0) = | \tilde{\dr}^{in} |_{H^s}^2 + \sum_{k=0}^{s} | \tfrac{1}{\sqrt{\rho(\cdot,0)}} \nabla^{k+1} \dr^{in} |_{L^2}^2 \equiv \hat{E}^{in} < +\infty \,,
$$
defining the time $T^*_{\eps}$ as
$$
T^*_{\eps} = \big\{ \tau \in [0, T_{\eps} ); \sup_{t \in [0, \tau]} E_{\eps}^{\frac{1}{2}} (t) \leq 2 \sqrt{\hat{E}^{in}} \big\}
$$
one can immediately derive that $T^*_{\eps} >0$ by using the continuity of the energy functional $E_{\eps} (t)$. Then the inequality \eqref{eps-Energy-Est-0} implies that
\begin{equation*}
  \tfrac{\d}{\d t} E_{\eps}^{\frac{1}{2}} (t) \leq \Lambda (t) ( 1 + E_{\eps}^{\frac{1}{2}} (t) + E_{\eps} (t) + E_{\eps}^{\frac{3}{2}} (t) ) \,,
\end{equation*}
where the non-negative function $\Lambda(t)$ is
 $$
 \Lambda(t) = C_1 (1 + |\tfrac 1 \rho|_{L^\infty} + |\tfrac 1 \rho|_{\dot{H}^s}) (1 + |\rho|_{L^\infty}^{\frac{1}{2}})^3 (1 + |\u|_{H^{s+1}}) \in L^1(0, T_0)\,.
 $$
Integrating with respect to $t \ (t \leq T^*_{\eps} )$ over $(0, t)$, so we have
\begin{equation*}
  \begin{aligned}
    E_{\eps}^{\frac{1}{2}} (t) \leq & F(t) := E_{\eps}^{\frac{1}{2}} (0) + \int_{0}^{t} \Lambda (\tau) \d \tau \sup_{\tau \in [0,t]} ( 1 + E_{\eps}^{\frac{1}{2}} (\tau) + E_{\eps} (\tau) + E_{\eps}^{\frac{3}{2}} (\tau) ) \\
    \leq & \sqrt{\hat{E}^{in}} + \int_{0}^{t} \Lambda (\tau) \d \tau \sup_{\tau \in [0,t]} ( 1 + \sqrt{\hat{E}^{in}} )^3
  \end{aligned}
\end{equation*}
for all $\eps>0$. Since $F(0)=\sqrt{\hat{E}^{in}}$, and noticing that $\int_{0}^{t} \Lambda (\tau) \d \tau$ is continuous in $t$ and is independent of $\eps>0$, then there is a time $T < T^*_{\eps}$ independent of $\eps>0$ such that $F(t) \leq 2 \sqrt{\hat{E}^{in}}$ for all $t \in (0, T)$. Namely, for all $\eps >0$ and $t \in [0, T]$, we have $E_{\eps}(t) \leq 4 \hat{E}^{in}$. Consequently, we get the following uniform energy bound
\begin{equation}\label{Uniform-Bound-E-eps}
  |\dot{\dr}^\eps|^2_{H^s} + \sum_{k=0}^{s} |\tfrac{1}{\sqrt{\rho}} \nabla^{k+1} \dr^\eps|^2_{L^2} + |\dr^\eps - \J_\eps \dr^{in}|^2_{L^2} \leq 4 \hat{E}^{in}
\end{equation}
for all $\eps>0$ and $t \in [0, T]$.

{\it{ Step 3. Pass to the limits.}} Based on the bound \eqref{dr-eps} and \eqref{Uniform-Bound-E-eps}, and taking limit in the approximate system \eqref{ODE-2} as $\eps \to 0$, we can obtain that there exists a $\dr \in L^{\infty} ([0,T] \times \R^N)$ satisfying $\nabla \dr \in C(0,T; H^s_{\rho^{-1/2}} (\R^N))$, $\dot{\dr} \in C(0,T; H^s(\R^N))$, meanwhile, $\dr$ satisfies the first equation of the system \eqref{rho-u-given} with the initial data \eqref{rho-u-given-Ini}. Then we know that
\begin{equation*}
  \begin{aligned}
    \left\{
    \begin{array}{l}
      \rho \ddot{\dr} = \Delta \dr + \Gamma (\rho, \u, \dr, \dot{\dr}) \dr + \lambda_1 (\dot{\dr} + \B \dr) + \lambda_2 \A \dr \, ,\\[2mm]
      (\dr, \dot{\dr}) \big{|}_{t=0} = ( \dr^{in}(x), \tilde{\dr}^{in}(x)) \in \mathbb{S}^{N+1} \times \R^{N}
    \end{array}
    \right.
  \end{aligned}
\end{equation*}
with $\dr \in L^{\infty} ([0,T] \times \R^{N})$, where
$$
\Gamma(\rho, u, \dr, \dot{\dr}) = - \rho |\dot{\dr}|^2 + |\nabla \dr|^2 - \lambda_2 {\dr}^\top \A \dr \,.
$$
Then Lemma \ref{lemma-d} yields that $\dr \in \mathbb{S}^{N-1}$. Therefore we finish the proof of the Proposition \ref{Proposition-1}
\end{proof}

\section{Local existence: Proof of Theorem \ref{theorem-1}} \label{Sect-5}
The aim of this section is to prove the local well-posedness of the compressible Ericksen-Leslie liquid crystal flow \eqref{compressible-Liquid-Crystal-Model}-\eqref{Initial-Data} with large initial data under the Leslie coefficients constraint $\mu_1 \geq 0,\ \mu_4 >0,\ \lambda_1 \leq 0,\ \mu_5 + \mu_6 + \tfrac{\lambda_2^2}{\lambda_1} \geq 0$, namely, Theorem \ref{theorem-1}. Firstly, we construct the approximate system of \eqref{compressible-Liquid-Crystal-Model} by iteration. More precisely, the iterating approximating sequences are constructed as follows:
\begin{equation}\label{Approx-Equat}
  \begin{aligned}
    \left\{ \begin{array}{c}
      \p_t \rho^{n+1} + \u^n \cdot \nabla \rho^{n+1} + \rho^{n+1} \div \u^n = 0\, ,\\ [2mm]
      \p_t \u^{n+1}  + \u^n \cdot \nabla \u^{n+1}  + \tfrac{p'(\rho^n)}{\rho^n} \nabla \rho^{n+1} = \tfrac{1}{\rho^n} \div ( \Sigma_1^{n+1} + \Sigma_2^n + \Sigma_3^{n+1} )\, ,\\ [2mm]
      \ddot{\dr}^{n+1} = \tfrac{1}{\rho^n} \Delta \dr^{n+1} + \tfrac{1}{\rho^n} \Gamma^{n+1} \dr^{n+1} + \tfrac{\lambda_1}{\rho^n} (\dot{\dr}^{n+1} + \B^n \dr^{n+1}) + \tfrac{\lambda_2}{\rho^n} \A^n \dr^{n+1} \, , \\ [2mm]
      (\rho^{n+1}, \u^{n+1}, \dr^{n+1}, \dot{\dr}^{n+1})(x, t) |_{t=0} = (\rho^{in}, \u^{in}, \dr^{in}, \tilde{\dr}^{in}) (x) \, ,
    \end{array}\right.
  \end{aligned}
\end{equation}
where
\begin{align*}
  \Sigma_1^{n+1} := & \tfrac{1}{2} \mu_4 ( \nabla \u^{n+1} + \nabla^\top \u^{n+1} ) + \xi \div \u^{n+1} I \, , \\
  \Sigma_2^n := & \tfrac{1}{2} |\nabla \dr^n|^2 I - \nabla \dr^n \odot \nabla \dr^n \, , \\
  \Sigma_3^{n+1} := & \tilde{\sigma}(\u^{n+1}, \dr^n, \dot{\dr}^n) \, ,
\end{align*}
and
\begin{align*}
  \A^n = & \tfrac{1}{2} (\nabla \u^n + \nabla^\top \u^n), \ \B^n = \tfrac{1}{2} (\nabla \u^n - \nabla^\top \u^n) \, , \\
  \dot{\dr}^{n+1} = & \p_t \dot{\dr}^{n+1} + \u^n \cdot \nabla \dr^{n+1} \, , \\
  \Gamma^{n+1} = & \Gamma (\rho^n, \u^n, \dr^{n+1}, \dot{\dr}^{n+1}) \, ,
\end{align*}
$\dot{\dr}^{n+1}$ is the iterating approximate material derivatives. And we start the approximating system with $$(\rho^0, \u^0, \dr^0, \dot{\dr}^0)(x, t)|_{t=0} = (\rho^{in}, \u^{in}, \dr^{in}, \tilde{\dr}^{in}) (x) \in \mathbb{R} \times \mathbb{R}^N \times \mathbb{S}^{N-1} \times \mathbb{R}^N \, .$$

Before proving the local well-posedness of \eqref{compressible-Liquid-Crystal-Model}-\eqref{Initial-Data}, we should obtain the existence conclusion of the approximate system \eqref{Approx-Equat}, which is presented in the following lemma.

\begin{lemma} \label{lemma-5-1}
  Suppose that $s>\tfrac{N}{2}+1$ and the initial data $(\u^{in}, \dr^{in}, \tilde{\dr}^{in}) \in \R^N \times \mathbb{S}^{N-1} \times \R^N$ satisfy $\u^{in}, \nabla \dr^{in}, \tilde{\dr}^{in} \in H^s$. Then there is a maximal number $T^{*}_{n+1} > 0$ such that the approximate system \eqref{Approx-Equat} admits a unique solution $(\u^{n+1}, \dot{\dr}^{n+1}, \dr^{n+1})$ satisfying $\u^{n+1} \in C(0, T^{*}_{n+1}; H^s) \cap L^2 (0, T^{*}_{n+1}; H^{s+1})$, and $\nabla \dr^{n+1}, \dot{\dr}^{n+1} \in C(0, T^{*}_{n+1}; H^s)$.
\end{lemma}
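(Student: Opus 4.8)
The plan is to solve the three equations of the approximate system \eqref{Approx-Equat} successively, in the order $\rho^{n+1}$, then $\u^{n+1}$, then $\dr^{n+1}$, and to argue by induction on $n$: assuming the $n$-th iterate $(\rho^n,\u^n,\dr^n,\dot{\dr}^n)$ already belongs to the spaces claimed for the $(n+1)$-st one, with $|\dr^n|=1$ and $\rho^n$ bounded above and below, we construct the $(n+1)$-st iterate on a common time interval. The base case is immediate, since $(\rho^0,\u^0,\dr^0,\dot{\dr}^0)$ is the time-independent extension of the initial data and $\underline{\rho}\le\rho^{in}\le\bar{\rho}$. For the density step, since $\u^n\in C(0,T;H^s)\hookrightarrow L^1(0,T;W^{1,\infty})$ for $s>\tfrac N2+1$, the linear transport equation \eqref{Approx-Equat}$_1$ has a unique solution $\rho^{n+1}$, obtained by the method of characteristics exactly as in the proof of Lemma \ref{rho-bound-Lemma}; the same computation gives the two-sided bound \eqref{rho-bound} with $\u$ replaced by $\u^n$, so $\rho^{n+1}$ and $1/\rho^{n+1}$ stay bounded above and below, while differentiating the equation and invoking the Moser-type inequality (Lemma \ref{Moser}) yields $\nabla\rho^{n+1}\in C(0,T;H^{s-1})$ and the corresponding bound on $\mathcal{N}_s(\rho^{n+1})$.

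Once $\rho^n$, $\rho^{n+1}$, $\dr^n$ and $\dot{\dr}^n$ are regarded as known, equation \eqref{Approx-Equat}$_2$ is a \emph{linear} second-order parabolic system for $\u^{n+1}$, because both $\Sigma_1^{n+1}$ and $\Sigma_3^{n+1}=\tilde{\sigma}(\u^{n+1},\dr^n,\dot{\dr}^n)$ depend linearly on $\nabla\u^{n+1}$, the remaining pieces of $\Sigma_3^{n+1}$ being lower order and built from the data $\dr^n,\dot{\dr}^n$. Its principal part is $1/\rho^n$ times the sum of the Lam\'e operator $\tfrac12\mu_4\Delta+(\tfrac12\mu_4+\xi)\nabla\div$ and the $\dr^n$-dependent part of $\tilde{\sigma}$; pairing the equation with $\rho^n\u^{n+1}$ and integrating by parts reproduces verbatim the computations \eqref{BEL-2}, \eqref{BEL-3} and \eqref{BEL-5} with $\dr=\dr^n$, so the associated bilinear form dominates $\tfrac12\mu_4|\nabla\u^{n+1}|_{L^2}^2$ — this is precisely the dissipativity of the basic energy law (Proposition \ref{Prop-Basic-Ener-Law}), and it is exactly where the constraints $\mu_1\ge0$, $\mu_4>0$, $\tfrac12\mu_4+\xi\ge0$, $\lambda_1\le0$, $\mu_5+\mu_6+\tfrac{\lambda_2^2}{\lambda_1}\ge0$ are used. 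A Galerkin approximation together with $H^s$ energy estimates — controlling the variable coefficients $1/\rho^n$, $p'(\rho^n)/\rho^n$ and the quadratic terms with Lemma \ref{Moser}, and noting that the source $\tfrac1{\rho^n}\div\Sigma_2^n$ lies in $C(0,T;H^{s-1})$ whenever $\nabla\dr^n\in C(0,T;H^s)$, the natural forcing class for an $L^2(H^{s+1})$ solution — then gives a unique $\u^{n+1}\in C(0,T;H^s)\cap L^2(0,T;H^{s+1})$.

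Finally, equation \eqref{Approx-Equat}$_3$ with its initial data is the system \eqref{rho-u-given}--\eqref{rho-u-given-Ini} with the pair $(\rho,\u)=(\rho^n,\u^n)$, which satisfies the hypotheses of Proposition \ref{Proposition-1}: $\u^n\in L^1(0,T;H^{s+1})$ since it is in $L^2(0,T;H^{s+1})$, $\rho^n\in L^\infty(0,T;\dot H^s)$, $\underline{\rho}\le\rho^n(\cdot,0)=\rho^{in}\le\bar{\rho}$, $\tilde{\dr}^{in}\in H^s$, and $\nabla\dr^{in}\in H^s_{(\rho^{in})^{-1/2}}$, the last two being equivalent to $H^s$ membership because $\rho^{in}$ is bounded above and below. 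Hence Proposition \ref{Proposition-1} produces a unique $\dr^{n+1}$ with $\dot{\dr}^{n+1}\in C(0,T;H^s)$ and $\nabla\dr^{n+1}\in C(0,T;H^s_{(\rho^n)^{-1/2}})=C(0,T;H^s)$, and Lemma \ref{lemma-d} forces $|\dr^{n+1}|=1$. (One caveat: in \eqref{Approx-Equat} the pair $(\rho^n,\u^n)$ satisfies $\partial_t\rho^n+\div(\rho^n\u^{n-1})=0$, not the same identity with $\u^n$; the proof of Proposition \ref{Proposition-1} nevertheless adapts verbatim, the mismatch producing only a lower-order term proportional to $(\u^n-\u^{n-1})\cdot\nabla\rho^n$ which is absorbed into the Gronwall estimate.) Taking $T^{*}_{n+1}>0$ to be the largest time on which all three solutions persist in the stated spaces, and collecting the uniqueness statements of the three steps, closes the induction.

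The main obstacle is the velocity step: one must verify that the \emph{full} second-order operator acting on $\u^{n+1}$ — including the $\Sigma_3^{n+1}$ contribution, not just the Lam\'e part — is uniformly elliptic, which is exactly the content of the basic energy law and is therefore where the Leslie-coefficient hypotheses become indispensable, and one must run the linear parabolic $H^s$-regularity theory with a source $\tfrac1{\rho^n}\div\Sigma_2^n$ of only $H^{s-1}$ regularity, which is borderline for the smoothing gain into $L^2(0,T;H^{s+1})$. Assembling a single existence time $T^{*}_{n+1}$ valid for all three subproblems is a comparatively minor bookkeeping matter.
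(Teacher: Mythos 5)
Your proof follows the same three-step successive linearization as the paper: solve the transport equation for $\rho^{n+1}$ given $\u^n$, solve the resulting linear Stokes-type parabolic system for $\u^{n+1}$, invoke Proposition~\ref{Proposition-1} for $\dr^{n+1}$, and take the minimum of the three existence times. The extra care you take — checking coercivity of the $\u^{n+1}$ operator from the sign conditions on the Leslie coefficients, and observing that $(\rho^n,\u^n)$ satisfies $\partial_t\rho^n+\div(\rho^n\u^{n-1})=0$ rather than the transport identity invoked in the proof of Proposition~\ref{Proposition-1}, with the mismatch absorbed as a lower-order term in the Gronwall estimate — is correct and tightens a point the paper leaves tacit.
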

\begin{proof}
  For the case $n+1$, the unknown vectors of the approximate system are $\rho^{n+1}$, $\u^{n+1}$, $\dr^{n+1}$, $\dot{\dr}^{n+1}$. The first equation of the approximate system is a linear equation about $\rho^{n+1}$, which have a unique solution $\rho^{n+1} \in C(0,\hat{T}_{n+1}; H^s) \cap L^2 (0, \hat{T}_{n+1}; H^{s+1})$ on the maximal time interval $[0, \hat{T}_{n+1})$. Substituting the solution $\rho^{n+1}$ of the first equation of the approximate system into the velocity equation of $\u^{n+1}$, which then to be a linear Stokes type system, therefore there exists a time $\check{T}_{n+1} \leq \hat{T}_{n+1}$, such that the second equation of the approximate system has a unique solution $\u^{n+1} \in C(0, \check{T}_{n+1}; H^s) \cap L^2(0, \check{T}_{n+1}; H^{s+1})$. Since $\rho^n, \u^n$ are the known, by Proposition \ref{Proposition-1}, we know that the orientation equation of $\dr^{n+1}$ admits the unique solution $\dr^{n+1}$ satisfying $\nabla \dr^{n+1}, \dot{\dr}^{n+1} \in C (0, \tilde{T}_{n+1}; H^s)$ on the maximal time interval $[0, \tilde{T}_{n+1})$. Denote the $T^{*}_{n+1} = \min \{\check{T}_{n+1}, \tilde{T}_{n+1} \}>0$, then we complete the proof of Lemma \ref{lemma-5-1}.
\end{proof}
\begin{remark}
  We remark that $T^{*}_{n+1} \leq T^{*}_{n}$.
\end{remark}

Secondly, we should get the uniform energy bound of the iterating approximate system \eqref{Approx-Equat} and the uniform positive lower bound $T$ of the time sequence $\{ T^{*}_{n+1} \}$ in the Lemma \ref{lemma-5-1}. Then by using the standard compactness arguments and Lemma \ref{lemma-d}, we can take limit for the system \eqref{Approx-Equat} as $n \!\to \!+\infty$. To do that, we can get the local well-posedness of the initial system \eqref{compressible-Liquid-Crystal-Model}-\eqref{Initial-Data}, and the time interval of the solution existence is $[0,T]$ . Before stating the lemma about the uniform energy bound of the system \eqref{Approx-Equat}, we define the energy functionals as follows:
\begin{align*}
  E_{n+1}(t) = & \mathcal{N}_s (\rho^{n+1}) + | \u^{n+1} |^2_{H^s_{\rho^n}} + | \dot{\dr}^{n+1} |^2_{H^s_{\rho^n}} + |\nabla \dr^{n+1}|^2_{H^s} \, , \\
  D_{n+1}(t) = & \tfrac {1}{2} \mu_4 |\nabla \u^{n+1}|^2_{H^s} + (\tfrac{1}{2} \mu_4 + \xi) |\div \u^{n+1}|^2_{H^s} + \mu_1 \sum_{k=0}^{s} |{\dr^n}^\top \nabla^k \A^{n+1} \dr^n|^2_{L^2} \\[1mm]
  & - \lambda_1 \sum_{k=0}^{s} (|\nabla^k \dot{\dr}^{n+1}|^2_{L^2} + |\nabla^k \B^{n+1} \dr^n + \tfrac{\lambda_2}{\lambda_1} \nabla^k \A^{n+1} \dr^n|^2_{L^2}) \\[1mm]
  & + (\mu_5 + \mu_6 + \tfrac{\lambda_2^2}{\lambda_1}) \sum_{k=0}^{s} |\nabla^k \A^{n+1} \dr^n|^2_{L^2} \, ,
\end{align*}
where
\begin{align*}
  \mathcal{N}_s (\rho^{n+1}) = \int_{\mathbb{R}^N} \tfrac{2 a}{\gamma -1} (\rho^{n+1})^\gamma \d x + \sum_{k=1}^{s} \int_{\mathbb{R}^N} \tfrac{p'(\rho^n)}{\rho^n} |\nabla^k \rho^{n+1}|^2 \d x \, ,
\end{align*}
and $| \u^{n+1} |^2_{H^s_{\rho^n}}$, $| \dr^{n+1} |^2_{H^s_{\rho^n}}$ are defined as $| \u |^2_{H^s_{\rho}}$, $| \dr |^2_{H^s_{\rho}}$.

Next, we are concerned with the following lemma which will give us an uniform existence time for the iterating solutions $(\vr^n, \u^n, \dr^n, \dot{\dr}^n)$. The lemma is articulated as
\begin{lemma} \label{lemma-5-2}
  Assume that $(\vr^{n+1}, \u^{n+1}, \dr^{n+1}, \dot{\dr}^{n+1})$ is the solution to the iterating approximate system \eqref{Approx-Equat}, for any fixed positive constant $M$, we define
  \begin{align*}
    T_{n+1} = \sup \Big\{ \tau \in [0, T^*_{n+1}); \sup_{t \in [0, \tau]} E_{n+1}(t) + \int_{0}^{\tau} D_{k+1} (t) \d t \leq M \Big\}\,,
  \end{align*}
  where $T^*_{n+1}>0$ is the existence time of the iterating approximating system \eqref{Approx-Equat}. Then for any fixed $M>E^{in}$, there is a constant time $T>0$ depending only on Leslie coefficients, $M$ and $E^{in}$, such that
  \begin{align*}
    T_{n+1} \geq T >0\,.
  \end{align*}
\end{lemma}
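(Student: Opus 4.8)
The plan is to close a bootstrap argument on the energy functional $E_{n+1}(t)$ by deriving an \emph{a priori} inequality for the iterates that mirrors Lemma \ref{A-Priori-Estimate}, but now carefully keeping track of the fact that the coefficients of the elliptic/hyperbolic operators involve the \emph{previous} iterate $\rho^n$ rather than $\rho^{n+1}$. Concretely, for each $1 \le k \le s$ I would apply $\nabla^k$ to the three equations of \eqref{Approx-Equat}, take $L^2$-inner products with $\tfrac{p'(\rho^n)}{\rho^n}\nabla^k\rho^{n+1}$, $\rho^n\nabla^k\u^{n+1}$, and $\rho^n\nabla^k\dot{\dr}^{n+1}$ respectively, and sum. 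The endpoint (highest-derivative) Leslie-stress terms assemble — exactly as in Proposition \ref{Prop-Basic-Ener-Law} and in the bound \eqref{APE-I} — into the nonnegative dissipation $D_{n+1}(t)$, using $\mu_1\ge 0$, $\mu_4>0$, $\tfrac12\mu_4+\xi\ge0$, $\lambda_1\le0$, $\mu_5+\mu_6+\tfrac{\lambda_2^2}{\lambda_1}\ge0$; here it is essential that the geometric constraint $|\dr^n|=1$ holds, which is guaranteed by Proposition \ref{Proposition-1} and Lemma \ref{lemma-d}. All remaining terms — commutators, intermediate-order products, and the terms produced because $\p_t\big(\tfrac{p'(\rho^n)}{\rho^n}\big)$ and $\p_t\rho^n$ are governed by the $n$-th mass equation — are estimated by Hölder, Sobolev embedding ($s>\tfrac N2+1$), the Moser inequality (Lemma \ref{Moser}), and Lemma \ref{rho-Hk-Lemma}, all of which only require $\u^n$, $\rho^n$, $\dr^n$ to satisfy the defining bound $\sup_{[0,\tau]}E_n+\int_0^\tau D_n\le M$ on $[0,T_n]$.

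The crucial preliminary is the density bound. By Lemma \ref{rho-bound-Lemma} applied to the first line of \eqref{Approx-Equat} (which has exactly the transport form $\p_t\rho^{n+1}+\u^n\cdot\nabla\rho^{n+1}=-\rho^{n+1}\div\u^n$), one gets
\begin{align*}
  \underline{\rho}\exp\Big\{-\!\int_0^t|\div\u^n|_{L^\infty}(s)\,\d s\Big\}\le \rho^{n+1}(x,t)\le \overline{\rho}\exp\Big\{\int_0^t|\div\u^n|_{L^\infty}(s)\,\d s\Big\}\,,
\end{align*}
so that $|\rho^{n+1}|_{L^\infty}$ and $|1/\rho^{n+1}|_{L^\infty}$ are controlled by $\Q(\u^n)$, and the same applies to $\rho^n$. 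Using $s-1>\tfrac N2$ gives $\int_0^t|\div\u^n|_{L^\infty}\le \int_0^t|\u^n|_{H^s}\,\d s\lesssim \int_0^t \Q(\u^n)E_n^{1/2}(s)\,\d s$, which on $[0,T_n]$ is $\lesssim T\,C(M)$; hence for $t\le T$ small, $\Q(\u^n)\le 2\kappa_1$, say, i.e. the weight functions are uniformly bounded by a constant depending only on the Leslie coefficients, $\underline\rho$, $\overline\rho$. This is what lets the scheme close: the $\Q(\u)$ factors that appear in Lemma \ref{A-Priori-Estimate} become harmless once $T$ is chosen small depending on $M$.

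Carrying these through, the differential inequality takes the schematic form
\begin{align*}
  \tfrac12\tfrac{\d}{\d t}E_{n+1}(t)+D_{n+1}(t)\le C\Big\{\Phi\big(E_n(t),E_{n+1}(t)\big)+D_{n+1}^{1/2}(t)\,\Psi\big(E_n(t),E_{n+1}(t)\big)\Big\}\,,
\end{align*}
where $\Phi,\Psi$ are polynomials in $E_n^{1/2},E_{n+1}^{1/2}$ with coefficients depending only on the Leslie coefficients. Absorbing $C D_{n+1}^{1/2}\Psi \le \tfrac12 D_{n+1}+C\Psi^2$ and then using the running hypothesis $E_n(t)\le M$ on $[0,T_n]$, one obtains $\tfrac{\d}{\d t}E_{n+1}\le C(M)\,\mathcal P(E_{n+1})$ for a fixed polynomial $\mathcal P$, together with $E_{n+1}(0)=E^{in}<M$. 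A standard ODE comparison (Gronwall / continuation) then yields a time $T>0$, depending only on $C(M)$, $M$ and $E^{in}$ — and in particular \emph{independent of $n$} — such that $\sup_{[0,T]}E_{n+1}(t)+\int_0^T D_{n+1}(t)\,\d t\le M$, provided $T\le T_n$; an induction on $n$ (with $T_0=+\infty$ for the constant-in-time initial iterate, or simply noting $T_{n+1}\le T_n^*$ as in the Remark) then gives $T_{n+1}\ge T>0$ for all $n$.

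The main obstacle I anticipate is the bookkeeping caused by the mismatch of indices: the time-derivative terms $\tfrac12\int \p_t\!\big(\tfrac{p'(\rho^n)}{\rho^n}\big)|\nabla^k\rho^{n+1}|^2$, $\tfrac12\int \p_t\rho^n\,(|\nabla^k\u^{n+1}|^2+|\nabla^k\dot{\dr}^{n+1}|^2)$, and the commutators $[\nabla^k,\tfrac1{\rho^n}\div]\Sigma_i^{n+1}$, $[\nabla^k,\tfrac{1}{\rho^n}\Delta]\dr^{n+1}$ must all be controlled using only the $n$-th-level bounds and the $n$-th mass equation $\p_t\rho^n=-\div(\rho^n\u^n)$ — one cannot differentiate $\rho^{n+1}$ in time. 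This is exactly the place where having $|\rho^n|_{L^\infty}$, $|1/\rho^n|_{L^\infty}$, $|\nabla^k(1/\rho^n)|_{L^2}$ bounded via Lemmas \ref{rho-bound-Lemma}–\ref{rho-Hk-Lemma} in terms of $\Q(\u^n)$ (hence by a constant on $[0,T]$) does the work; the estimates themselves are then formally identical to those in the proof of Lemma \ref{A-Priori-Estimate}, with every $\rho$ replaced by $\rho^n$ and every unknown by its $(n{+}1)$-level counterpart, so I would not reproduce them in detail but rather point to that proof.
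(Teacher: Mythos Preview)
Your overall strategy---derive a differential inequality for $E_{n+1}$ mirroring Lemma~\ref{A-Priori-Estimate} and then run a bootstrap in $n$---is exactly the paper's approach. However, your schematic inequality and the absorption step hide a genuine gap. The right-hand side necessarily contains $D_n^{1/2}(t)$, not only $D_{n+1}^{1/2}(t)$: in the $(n{+}1)$-director equation the advection is by $\A^n,\B^n$, so the endpoint contribution $\langle(\nabla^k\B^n)\dr^{n+1},\nabla^k\dot{\dr}^{n+1}\rangle$ carries $|\nabla\u^n|_{H^s}$ (and the cancellation with the stress in the velocity equation, which involves $\tilde\sigma(\u^{n+1},\dr^n,\dot{\dr}^n)$, no longer produces a clean square as in Proposition~\ref{Prop-Basic-Ener-Law}). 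These $D_n^{1/2}$ terms cannot be absorbed into $D_{n+1}$ by Young's inequality, and $D_n$ is controlled only in $L^1_t$ by the induction hypothesis, not in $L^\infty_t$; hence you \emph{cannot} reduce to an autonomous ODE $\tfrac{\d}{\d t}E_{n+1}\le C(M)\mathcal P(E_{n+1})$. The paper instead integrates the inequality in time and uses $\int_0^t D_{n-1}^{1/2}\le t^{1/2}\big(\int_0^t D_{n-1}\big)^{1/2}\le t^{1/2}M^{1/2}$, producing explicit $t$- and $t^{1/2}$-factors that are then made small by choosing $T$.

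A second, related omission: the dependence reaches back further than level $n$. Since $\partial_t\rho^n=-\div(\rho^n\u^{n-1})$, the weight derivatives $\tfrac12\int\partial_t\rho^n(|\nabla^k\u^{n+1}|^2+|\nabla^k\dot{\dr}^{n+1}|^2)$ bring in $\u^{n-1}$, and unweighting $|\u^n|_{H^s_{\rho^{n-1}}}\mapsto|\u^n|_{H^s}$ costs $\Q(\u^{n-2})$, so the paper's actual bounds $\tilde H,\tilde I,\tilde J$ involve $E_{n-1}$ and $\Q(\u^{n-2}),\Q(\u^{n-3})$ as well. A one-step induction on ``$E_n\le M$ on $[0,T_n]$'' is therefore insufficient. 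The paper handles this by extracting a strictly decreasing subsequence $\{T_{n_q}\}$ of $\{T_n\}$: by construction $T_k>T_{n_q}$ for all $k<n_q$, so on $[0,T_{n_q}]$ every previous energy is $\le M$ and every $\Q(\u^k)$ is bounded by $\Q(M^{1/2})$, which closes the estimate. A strong induction (assuming the uniform bound for all $k\le n$) would also work, but your proposal does not set one up.
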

\begin{proof}
  It is obvious that $T_{n+1} >0$ by the continuity of the energy functional $E_{n+1}(t)$. We now consider the time sequence $\{T_n\}$, and prove that it admits a uniform positive lower bound. If the sequence $\{T_n\}$ is increasing, the conclusion immediately holds. Otherwise, if the time sequence is not increasing, we can choose a strictly increasing sequence $\{n_q\}_{q=1}^{\Lambda}$ as follows:
  \begin{align*}
    n_{q+1} = \min \{ n; n>n_q, T_n < T_{n_q} \} \ and \ n_1 = 1 \,.
  \end{align*}
  The number $\Lambda$ is finite or infinite. If $\Lambda<\infty$, the conclusion holds. Hence we only need to consider the case $\Lambda=\infty$. The definition of $n_q$ infers that the time sequence $\{ T_{n_q} \}$ is strictly decreasing, whose uniform lower bound is the same as the time sequence $\{T_n\}$, obviously. So the goal of us is to prove
  \begin{align*}
    \lim_{q \to \infty} T_{n_q} > 0\,.
  \end{align*}

  We now first to estimate the basic energy eatimate of the iterating approximating system \eqref{Approx-Equat}.
In order to get the $L^2$-estimate of the approximating system, multiplying the three equations of system \eqref{Approx-Equat} by $\tfrac{\gamma p(\rho^{n+1})}{(\gamma-1) \rho^{n+1}}$, $\rho^n \u^{n+1}$ and $\rho^n \dot{\dr}^{n+1}$, respectively, then similar calculation as the basic energy law in section \ref{Sect-2} gives us
  \begin{align}\label{AE-Basic}
    \no \tfrac{1}{2} \tfrac{\d}{\d t} & \int_{\mathbb{R}^N} \tfrac{2 a}{\gamma-1} (\rho^{n+1})^\gamma + \rho^n ( |\u^{n+1}|^2 + |\dot{\dr}^{n+1}|^2 ) + |\nabla \dr^{n+1}|^2 \d x \\
    \no & + \tfrac{1}{2} \mu_4 |\nabla \u^{n+1}|^2_{L^2} + ( \tfrac{1}{2} \mu_4 + \xi ) |\div \u^{n+1}|^2_{L^2} + \mu_1 |{\dr^n}^\top \A^{n+1} \dr^n|^2_{L^2} \\
    \no & - \lambda_1 ( |\dot{\dr}^{n+1}|^2_{L^2} + |\B^{n+1} \dr^n + \tfrac{\lambda_2}{\lambda_1} \A^{n+1} \dr^n|^2_{L^2} ) + (\mu_5 + \mu_6 + \tfrac{\lambda^2_2}{\lambda_1}) |\A^{n+1} \dr^n|^2_{L^2}\\
    \lesssim & \tilde{H} \, ,
  \end{align}
where
%  \begin{align}\label{AE-H}
%   \no H = &   ( |\u^n|_{H^s} + |\u^{n-1}|_{H^s} ) ( |\rho^n|_{L^\infty} + |\rho^n|_{\dot{H}^s}) ( |\u^{n+1}|^2_{L^2} + |\dot{\dr}^{n+1}|^2_{L^2} ) \\
%   \no & + |\u^n|_{\dot{H}^s} \int_{\mathbb{R}^N} (\rho^{n+1})^\gamma \d x + |p'(\rho^n)|_{L^\infty} |\nabla \rho^{n+1}|_{L^2} |\u^{n+1}|_{L^2} \\
%   \no &+ |\nabla \u^{n+1}|_{L^2} |\nabla \dr^n|^2_{H^1} + |\nabla \u^n|_{L^2} |\nabla \dr^{n+1}|^2_{H^1}\\
%   &+ (|\lambda_1| + |\lambda_2|) (|\nabla \u^n|_{L^2} |\dot{\dr}^{n+1}|_{L^2} + |\nabla \u^{n+1}|_{L^2} |\dot{\dr}^{n}|_{L^2}) \, .
%  \end{align}
\begin{align*}
  \tilde{H} = & \big( \Q (\u^{n-2}) E_n^{\frac{1}{2}} + \Q (\u^{n-3}) E_{n-1}^{\frac{1}{2}} \big) \big( \Q (\u^{n-1}) + \Q (\u^{n-2}) E_n^{\frac{1}{2}} \big) \Q (\u^{n-1}) E_{n+1}\\[2mm]
  & + \Q (\u^{n-2}) E_n^{\frac{1}{2}} E_{n+1} + \Q (\u^{n-1}) E_{n+1} + \tfrac{1}{\sqrt{\mu_4}} \big[ D_n^{\frac{1}{2}} (t) E_{n+1} (t) + D_{n+1}^{\frac{1}{2}} (t) E_n (t) \big] \\[2mm]
  & + \tfrac{|\lambda_1| + |\lambda_2|}{\sqrt{\mu_4}} \big[ \Q (\u^{n-1}) D_n^{\frac{1}{2}} E_{n+1}^{\frac{1}{2}} + \Q (\u^{n-2}) D_{n+1}^{\frac{1}{2}} E_n^{\frac{1}{2}} \big] \, .
\end{align*}

We then turn to deal with the higher order estimate of the approximation system \eqref{Approx-Equat}. For $1 \leq k \leq s$, acting $\nabla^k$ on the three equations of the system \eqref{Approx-Equat}, multiplying them by $\tfrac{p'(\rho^n)}{\rho^n} \nabla^k \rho^{n+1}$, $\rho^n \nabla^k \u^{n+1}$ and $\rho^n \nabla^k \dot{\dr}^{n+1}$, respectively, then integrating the resulting equalities over $\mathbb{R}^n$ with respect to $x$ and adding up them together, one has
\begin{align}\label{AE-k}
  \no & \tfrac{1}{2} \tfrac{\d}{\d t} \int_{\mathbb{R}^N} \tfrac{p'(\rho^n)}{\rho^n} |\nabla^k \rho^{n+1}|^2 + \rho^n ( |\nabla^k \u^{n+1}|^2 + |\nabla^k \dot{\dr}^{n+1}|^2 ) + |\nabla^{k+1} \dr^{n+1}|^2 \d x \\[2mm]
  \no & + \tfrac{1}{2} \mu_4 |\nabla^{k+1} \u^{n+1}|^2_{L^2} + ( \tfrac{1}{2} \mu_4 + \xi ) |\nabla^k \div \u^{n+1}|^2_{L^2} - \lambda_1 |\nabla^k \dot{\dr}^{n+1}|^2_{L^2} + \mu_1 |{\dr^n}^\top \nabla^k \A^{n+1} \dr^n|^2_{L^2} \\[2mm]
  \no & - \lambda_1 |\nabla^k \B^{n+1} \dr^n + \tfrac{\lambda_2}{\lambda_1} \nabla^k \A^{n+1} \dr^n|^2_{L^2} + (\mu_5 + \mu_6 + \tfrac{\lambda^2_2}{\lambda_1}) |\nabla^k \A^{n+1} \dr^n|^2_{L^2} \\[2mm]
  & \lesssim \tilde{I} + \tilde{J}\,,
\end{align}
where
\begin{align}\label{tilde-I-dot}
  \no \tilde{I} = & \tfrac{1}{\sqrt{\mu_4}} \Q (\u^{n-1}) \big( 1 + \Q (\u^{n-2}) E_{n}^{\frac{1}{2}}(t) \big) E_{n+1}^{\frac{1}{2}}(t) D_{n+1}^\frac{1}{2}(t) + \tfrac{1}{\sqrt{\mu_4}} \Q (\u^{n-1}) \Q (\u^n) D_{n}^\frac{1}{2}(t) E_{n+1}^{\tfrac{1}{2}} (t) \\[2mm]
  \no & + \Q (\u^{n-1}) \Q (\u^{n-2}) E_n^{\frac{1}{2}}(t) ( 1 \!+\! E_n^{\frac{1}{2}}(t) ) E_{n+1}(t) \!+\! \Q (\u^{n-1}) \Big[ 1 + \Q (\u^{n-2}) E_n^{\frac{1}{2}}(t) \Big] E_{n+1}^2(t) \\[2mm]
  \no & + \tfrac{1}{\sqrt{\mu_4}} ( D_{n}^\frac{1}{2}(t) E_{n+1}(t) + D_{n+1}^\frac{1}{2}(t) E_n(t) ) + \tfrac{|\lambda_1| + |\lambda_2|}{\sqrt{\mu_4}} \Q (\u^{n-1}) D_{n}^{\frac{1}{2}} (t) \sum_{l=1}^{4} E_{n+1}^{\frac{l}{2}} (t) \\[2mm]
  & + \tfrac{\mu}{\sqrt{\mu_4}} \sum_{l=1}^{4} E_n^{\frac{l}{2}} (t) \Big[ \Q ( \u^{n-1}) E_{n+1}^{\frac{1}{2}} (t) + \Q (\u^{n-2}) E_n^{\frac{1}{2}}(t) \Big] D_{n+1}^{\frac{1}{2}} (t)
\end{align}
and
\begin{align}\label{tilde-J}
  \no \tilde{J} = & \Q(\u^{n-1}) \big( \Q(\u) E_n^{\frac{1}{2}}(t) + \Q(\u^{n-3}) E_{n-1}^{\frac{1}{2}}(t) \big) \big( 1 + ( \Q(\u^{n-2}) E_n^{\frac{1}{2}}(t)) \big) E_{n+1}(t) \\[2mm]
  \no & + \mu \Q ( \u^{n-1}) \P_s(|\rho^n|_{\dot{H}^s}) \Big[ \Q (\u^{n-2}) E_n^{\frac{1}{2}}(t) E_{n+1}^{\frac{1}{2}}(t) \big( 1 + E_n^{\frac{1}{2}}(t) + E_{n+1}^{\frac{1}{2}}(t) \big) + E_{n+1}(t) \Big] \\[2mm]
  \no & + \Q(\u^{n-1}) \P_s (|\rho^n|_{\dot{H}^s}) \Big[ E_n(t) E_{n+1}^{\frac{1}{2}}(t) + E_{n+1}(t) + E^{\frac{3}{2}}_{n+1}(t) + E_{n+1}^2(t) \Big] \\[2mm]
  \no & +  \mu \Q(\u^{n-1}) \P_s (|\rho^n|_{\dot{H}^s}) E_{n+1}(t) \sum_{l=0}^{4} E_n^{\frac{l}{2}}(t) \\[2mm]
  & + \tfrac{\mu + \xi}{\sqrt{\mu_4}} \Q(\u^{n-1}) \P_s (|\rho^n|_{\dot{H}^s}) \Big[ D_{n+1}^{\frac{1}{2}}(t) E_{n+1}^{\frac{1}{2}}(t) + D_n^{\frac{1}{2}}(t) \sum_{l=1}^{4} E_{n+1}^{\frac{l}{2}}(t) \Big] \, .
\end{align}
Consequently, summing up with $1 \leq k \leq s$ in \eqref{AE-k} and combining the result inequality with the $L^2$-estimate \eqref{AE-Basic} implies that
\begin{align} \label{Appro-Energy-Estimate}
  \tfrac{1}{2} \tfrac{\d}{\d t} E_{n+1}(t) + D_{n+1}(t) \lesssim \tilde{H} + \tilde{I}+ \tilde{J}
\end{align}
holds for all $t \in [0, T^*_{n+1})$. Recalling the definition of the sequence $\{ n_q \}$, we have that $T_N>T_{n_q}$ holds for any integer $N<n_q$. Taking $n=n_q-1$ in the inequality \eqref{Appro-Energy-Estimate}, then by the definition of $T_n$ we have
\begin{align*}
  E_k (t) + \int_{0}^{t} D_k(t) \d t \leq M
\end{align*}
holds for all the time $t \in [0, T_{n_q}]$ and $k \leq n_q$. Meanwhile, by the notation of $\Q(\u)$ we have that there exists a sufficiently large positive number $\Lambda$ and a constant $C$ such that
\begin{align*}
  \Q(\u^k) \leq \Q(M^{\frac{1}{2}}) \leq C \exp ( \Lambda M^{\frac{1}{2}} t )
\end{align*}
and
\begin{align*}
  \P_s(|\rho^k|_{\dot{H}^s}) \leq C \Q (\u^{k-2}) \P_s (E_k^{\frac{1}{2}} (t)) \leq C \Q (M) \P_s(M^{\frac{1}{2}}) \leq C \Q(M) \sum_{k=1}^{s} M^{\frac{k}{2}}\,.
\end{align*}
for all $t \in [0, T_{n_q}]$ and $k \leq n_q$. Then based on the inequality \eqref{Appro-Energy-Estimate} and the estimates of $\tilde{H}$, $\tilde{I}$ and $\tilde{J}$, we have
\begin{align}\label{Appro-Energy-Estimate-1}
  \tfrac{1}{2} \tfrac{\d}{\d t} E_{n_q}(t) + D_{n_q} (t)
  \leq  C \mathcal{F} (t) \,,
\end{align}
where
\begin{align*}
  \mathcal{F} (t) = G_1(M) (1\!+\!E_{n_q}^{\frac{1}{2}}) (t) D_{n_q}^{\frac{1}{2}} (t) \!+\! G_2(M) \sum_{k=1}^{4} E_{n_q}^{\frac{k}{2}} (t) D_{n_q-1}^{\frac{1}{2}} (t) \!+\! G_3(M) \!\sum_{k=0}^{4}\! E_{n_q}^{\frac{k}{2}} (t) \,.
\end{align*}
Here
\begin{align*}
  G_1(M) :=& ( 1+\Q(M) ) \sum_{k=0}^{s+2} M^{\frac{k}{2}} \,, \\
  G_2(M) :=& ( 1+\Q(M) ) \sum_{k=0}^{s} M^{\frac{k}{2}}\,, \\
  G_3(M) :=& \Q(M) \sum_{k=0}^{s+4} M^{\frac{k}{2}} \,.
\end{align*}
Integrating the inequality \eqref{Appro-Energy-Estimate-1} with respect to $t$ over $(0, t)$, and noticing the fact that for any $t \in (0, t_{n_q})$
\begin{align*}
  & C \int_{0}^{t} G_1(M) (1\!+\!E_{n_q}^{\frac{1}{2}}) (\tau) D_{n_q}^{\frac{1}{2}} (\tau) \d \tau \leq \tfrac{1}{2} \int_{0}^{t} D_{n_q} (\tau) \d \tau + \tfrac{1}{2} C^2 G_1^2(M) (1+M) t
\end{align*}
and
\begin{align*}
  \int_{0}^{t} \!\sum_{k=1}^{4}\! E_{n_q}^{\frac{k}{2}} (\tau) D_{n_q-1}^{\frac{1}{2}} (\tau) \d \tau
  \!\leq \!\sum_{k=1}^{4}\! M^{\frac{k}{2}} \Big(\! \int_{0}^{t} \!D_{n_q-1} (\tau) \d \tau \!\Big)^{\frac{1}{2}} t^{\frac{1}{2}}
  \!\leq\!\sum_{k=1}^{4}\! M^{\frac{k+1}{2}} t^{\frac{1}{2}}
\end{align*}
hold by using Cauchy inequality and H\"older inequality. Then we can infer that
\begin{align*}
  & E_{n_q} (t) + \int_{0}^{t} D_{n_q} (\tau) \d \tau \leq  \mathcal{H} (t) \,,
\end{align*}
where
\begin{align*}
  \mathcal{H} (t) := E^{in} + \Big( \tfrac{1}{2} C^2 G_1^2(M) (1+M) + C G_3 (M) \sum_{k=0}^{4} M^{\frac{k}{2}} \Big) t + C G_2(M) \sum_{k=1}^{4} M^{\frac{k+1}{2}} t^{\frac{1}{2}}\,.
\end{align*}
Obviously, $\mathcal{H}(t)$ is a strictly increasing and continuous function with respect to $t$, and $\mathcal{H}(0) = E^{in} \leq M$, then there exists a time $\hat{t}>0$, depending only on $M$, initial energy $E^{in}$ and Leslie coefficients such that
\begin{align*}
  E_{n_q} + \int_{0}^{t} D_{n_q} (\tau) \d \tau \leq M \,.
\end{align*}
holds for any $t \in [0, \hat{t}]$ and $p \in \mathbb{N}^+$. So we deduce that $T_{n_q} \geq \hat{t} > 0$ by the definition of $T_n$, hence $T=\lim_{q \to \infty} T_{n_q} \geq \hat{t} >0$. Consequently, we finish the proof of the Lemma \ref{lemma-5-2}.
\end{proof}

\textbf{Proof of theorem \ref{theorem-1}: Local well-posedness.}
Lemma \ref{lemma-5-2} implies that for any fixed $M>E^{in}$ there is a $T>0$ such that for all integer $n \geq 0$ and $t \in [0,T]$
\begin{align*}
  \sup_{t \in [0,T]} \!\Big( \!\mathcal{N}_{s} (\rho^{n+1}) \!+\! | \u^{n+1} |_{H^s_{\rho^n}}^2 \!+\! |\dot{\dr}^{n+1}|_{H^s_{\rho^n}}^2 \!+\! |\nabla \dr^{n+1}|_{H^s}^2 \!\Big) \!+\! \tfrac{1}{2} \mu_4 \!\int_{0}^{T}\! |\nabla \u^{n+1}|_{H^s}^2 (t) \d t \!\leq\! M \,.
\end{align*}
Then by compactness arguments and Lemma \ref{lemma-d}, we obtain that the system \eqref{compressible-Liquid-Crystal-Model} with the initial data \eqref{Initial-Data} admits a unique solution $(\rho, \u, \dr) \in \R \times \R^N \times \mathbb{S}^{N-1}$ satisfying
\begin{align*}
  \sup_{t \in [0,T]} \!\Big( \!\mathcal{N}_{s} (\rho) \!+\! | \u |_{H^s_{\rho}}^2 \!+\! |\dot{\dr}|_{H^s_{\rho}}^2 \!+\! |\nabla \dr|_{H^s}^2 \!\Big) \!+\! \tfrac{1}{2} \mu_4 \!\int_{0}^{T}\! |\nabla \u|_{H^s}^2 (t) \d t \!\leq\! M \,.
\end{align*}
Therefore the proof of the theorem \ref{theorem-1} is finished.

\section{Global classical solution} \label{Sect-6}

 The goal of this section is to prove existence of global classical solution $(\rho, \u, \dr)$ near the equilibrium $(1, 0 ,\dr_0)$ for the system \eqref{compressible-Liquid-Crystal-Model} in an additional coefficients constraint $\lambda_1 < 0$. Namely, we prove the global solution to \eqref{Expan-System} with small initial data. Before we proceed, what should be mentioned is that the main difficult in proving the well-posedness of system \eqref{Expan-System} near the equilibrium is lacking some dissipation by the observation of the $H^s$-estimate \eqref{A-Priori-Estimate-0}. So we need to find a new energy estimate which can give us enough dissipated terms. For any constants $\eta_1, \eta_2>0$, we present the energy functionals as follows:
 \begin{align*}
  \mathcal{E}_{\eta} (t) := & \int_{\R^N} \tfrac{p'(1+\vr)}{1+\vr} |\vr|^2 \d x + \sum_{k=1}^{s} \int_{\R^N} ( \tfrac{p'(1+\vr)}{1+\vr} - \eta_1 ) |\nabla^k \vr|^2 \d x \\
  & + \sum_{k=0}^{s-1} \int_{\R^N} ((1\!+\!\vr) - \eta_1) |\nabla^k \u|^2 + ((1\!+\!\vr) - \eta_2) |\nabla^{k+1} \dot{\dr}|^2 + (1-\eta_2) |\nabla^{k+1} \dr|^2 \d x \\
  & + \eta_1 |\u + \nabla \vr|_{H^{s-1}}^2 + \eta_2 |\dot{\dr} + \dr|_{\dot{H}^s}^2 + |\sqrt{1\!+\!\vr} \nabla^s \u|_{L^2}^2 + |\sqrt{1\!+\!\vr} \dot{\dr}|_{L^2}^2 + |\nabla^{s+1} \dr|_{L^2}^2 \\
  \mathcal{D}_{\eta} (t) := & \tfrac{3}{4} \eta_1 \sum_{k=1}^{s} \int_{\R^N} \tfrac{p'(1+\vr)}{1+\vr} |\nabla^k \vr|^2 \d x \!+\! \tfrac{3}{4} \eta_2 \sum_{k=1}^{s} \int_{\R^N} |\tfrac{1}{1+\vr} \nabla^{k+1} \dr|^2 \d x + ( \tfrac{1}{4} \mu_4 \!+\! \tfrac{1}{2} \xi ) |\div \u|_{H^s}^2 \\
  & + \tfrac{1}{4} \mu_4  |\nabla \u|_{H^s}^2 + (\mu_5+\mu_6+\tfrac{\lambda_2^2}{\lambda_1}) \sum_{k=0}^{s} |(\nabla^k \A) \dr|_{L^2}^2 + \mu_1 \sum_{k=0}^{s} |\dr^\top (\nabla^k \A) \dr|_{L^2}^2 \\
  & - \tfrac{\lambda_1}{2} \sum_{k=0}^{s} |\nabla^k \dot{\dr} + (\nabla^k \B) \dr + \tfrac{\lambda_2}{\lambda_1} (\nabla^k \A) \dr|_{L^2}^2 \,.
 \end{align*}
With the above energy functionals in hands, we then articulate the following lemma, which plays an key role in proving the global well-posedness of the system \eqref{Expan-System}.
\begin{lemma}\label{lemma-6-1}
  There exists a small constant $\eta_0>0$, depending only on Leslie coefficients, such that if $(1+\vr, \u, \dr)$ is the local solution constructed in Theorem \ref{theorem-1}, then for all $0<\eta_1, \eta_2 \leq \eta_0$
  \begin{align*}
    \tfrac{1}{2} \tfrac{\d}{\d t} \mathcal{E}_{\eta} (t) + \mathcal{D}_{\eta} (t) \leq C \mathcal{D}_{\eta} (t) \sum_{k=1}^{s+4} \mathcal{E}_{\eta}^{\frac{k}{2}} (t) \,,
  \end{align*}
  where the positive constant $C$ depends only on the Leslie coefficients.
\end{lemma}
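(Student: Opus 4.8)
The plan is to begin from the higher-order energy computation of Lemma~\ref{A-Priori-Estimate}, rewritten for the shifted density $\rho=1+\vr$ with the weights $\tfrac{p'(1+\vr)}{1+\vr}$ and $1+\vr$. Acting $\nabla^k$ ($0\le k\le s$) on the three equations of \eqref{Expan-System}, pairing with $\tfrac{p'(1+\vr)}{1+\vr}\nabla^k\vr$, $(1+\vr)\nabla^k\u$ and $(1+\vr)\nabla^k\dot{\dr}$ respectively, summing, and combining with the basic energy law, reproduces the identity behind \eqref{APE-1}--\eqref{APE}: on the left one has $\tfrac12\mu_4|\nabla\u|^2_{H^s}+(\tfrac12\mu_4+\xi)|\div\u|^2_{H^s}$ together with the three Leslie quadratic forms appearing in $\mathcal{D}_\eta$, while on the right one has an expression of the type \eqref{Energy-Estimate}. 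Since Theorem~\ref{theorem-1} keeps $\vr$ small on $[0,T]$, the weights are comparable to constants and $\Q(\u)\lesssim 1$. The defect of this identity alone is that its dissipation \eqref{Dissp-Energy} sees no $|\nabla\vr|_{H^s}$ and controls $\nabla\dr$ only through the degenerate combination $(\nabla^k\A)\dr$; the two interaction multipliers below manufacture precisely these missing dissipations.

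\textbf{The $\vr$-interaction.} I would test $\nabla^k$ of the momentum equation \eqref{Expan-System}$_2$ against $\nabla^k(\u+\nabla\vr)$ for $0\le k\le s-1$, using \eqref{Expan-System}$_1$ to rewrite $\nabla\partial_t\vr$. The decisive term is $\bigl\langle\tfrac{p'(1+\vr)}{1+\vr}\nabla^{k+1}\vr,\nabla^{k+1}\vr\bigr\rangle=\int\tfrac{p'(1+\vr)}{1+\vr}|\nabla^{k+1}\vr|^2\ge 0$, which, moved to the left and weighted by $\eta_1$, supplies the $\tfrac34\eta_1\sum_{k=1}^s\int\tfrac{p'(1+\vr)}{1+\vr}|\nabla^k\vr|^2$ slot of $\mathcal{D}_\eta$. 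The terms to absorb are: the $\Sigma_1$-contribution and the $\nabla^{k+1}\bigl((1+\vr)\div\u\bigr)$-contribution, which pair $\nabla^{s+1}\u$ (and lower derivatives of $\u$) against $\nabla^{\le s}\vr$ and which Young's inequality with a fixed weight, followed by the smallness of $\eta_1$, puts respectively inside $\tfrac14\mu_4|\nabla\u|^2_{H^s}$ and inside $\tfrac34\eta_1\int\tfrac{p'(1+\vr)}{1+\vr}|\nabla^s\vr|^2$; and the convective, pressure-commutator, $\partial_t$-of-weight, and $\Sigma_2,\Sigma_3$ contributions, each carrying an extra factor among $\vr,\nabla\u,\nabla\dr,\dot{\dr}$ and hence genuinely cubic, bounded by $C\mathcal{D}_\eta\,\text{poly}(\mathcal{E}_\eta^{1/2})$ via Moser's inequality (Lemma~\ref{Moser}) and the density bounds of Lemmas~\ref{rho-bound-Lemma}--\ref{rho-Hk-Lemma}. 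Adding $\eta_1$ times this estimate introduces into the energy only the negative-definite pieces $-\eta_1|\nabla^k\vr|^2$ and $-\eta_1|\nabla^k\u|^2$, dominated by the main terms for $\eta_1$ small, so $\mathcal{E}_\eta$ stays coercive.

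\textbf{The $\nabla\dr$-interaction.} I would test $\nabla^k$ of the $\dr$-equation \eqref{Expan-System}$_3$ against $\nabla^k(\dot{\dr}+\dr)$ for $1\le k\le s$, writing $\partial_t(\dot{\dr}+\dr)=\ddot{\dr}-\u\cdot\nabla(\dot{\dr}+\dr)+\dot{\dr}$. The main term $\bigl\langle\tfrac{1}{1+\vr}\nabla^k\Delta\dr,\nabla^k\dr\bigr\rangle$ integrates by parts to $-\int\tfrac{1}{1+\vr}|\nabla^{k+1}\dr|^2$ modulo lower order, producing the $\tfrac34\eta_2\sum_{k=1}^s\int|\tfrac{1}{1+\vr}\nabla^{k+1}\dr|^2$ slot of $\mathcal{D}_\eta$; its companion $\bigl\langle\tfrac{1}{1+\vr}\nabla^k\Delta\dr,\nabla^k\dot{\dr}\bigr\rangle$ is of the type already estimated in \eqref{d-k} and \eqref{APE-I-3}. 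The damping term $\bigl\langle\tfrac{\lambda_1}{1+\vr}(\dot{\dr}+\B\dr)+\tfrac{\lambda_2}{1+\vr}\A\dr,\nabla^k(\dot{\dr}+\dr)\bigr\rangle$ I would regroup around the square $\lambda_1\bigl(\nabla^k\dot{\dr}+(\nabla^k\B)\dr+\tfrac{\lambda_2}{\lambda_1}(\nabla^k\A)\dr\bigr)$ already present in $\mathcal{D}_\eta$: its pairing with $\nabla^k\dot{\dr}$ reduces to quantities inside $\mathcal{D}_\eta$, hence is $\lesssim\eta_2\mathcal{D}_\eta$ and reabsorbed into $\mathcal{D}_\eta$ for $\eta_2$ small, while its pairing with $\nabla^k\dr$ is split by Young against the same square and, after one further integration by parts against $\dot{\dr}+\dr$, against $\tfrac34\eta_2|\tfrac{1}{1+\vr}\nabla^{k+1}\dr|^2$ and $\tfrac14\mu_4|\nabla\u|^2_{H^s}$. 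The $\widetilde{\Gamma}\dr$ term is quadratic in $(\vr,\dot{\dr},\nabla\dr,\nabla\u)$ by the very form of $\widetilde{\Gamma}$, and---this is where the geometric constraint enters---its apparently $\nabla\u$-linear piece $-\lambda_2(\dr^\top\A\dr)\dr$, once paired with $\nabla^k\dr$, is upgraded to a quantity quadratic in $\nabla\dr$ through the identity $\dr\cdot\nabla^k\dr=-\sum_{a+b=k,\,a\ge1}\nabla^a\dr\cdot\nabla^b\dr$ coming from $|\dr|=1$ (cf. Lemma~\ref{lemma-d}); all such terms are thus cubic and bounded by $C\mathcal{D}_\eta\,\text{poly}(\mathcal{E}_\eta^{1/2})$.

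\textbf{Assembly and the main obstacle.} I would sum the baseline identity over $k$, add $\eta_1$ and $\eta_2$ times the two interaction estimates, regroup the positive quadratic forms into exactly $\mathcal{D}_\eta$, and fix $\eta_0$ small so that: (a) every Young remnant of the cross terms coupling $\u$ to $\vr$ and $\dot{\dr}$ to $\nabla\dr$ fits inside the $\tfrac14\mu_4$, $\tfrac34\eta_1\tfrac{p'(1+\vr)}{1+\vr}$, $\tfrac34\eta_2$ and $-\tfrac{\lambda_1}{2}$ slots of $\mathcal{D}_\eta$; (b) the coefficients of the Leslie squares in $\mathcal{D}_\eta$ remain positive under the $O(\eta_1+\eta_2)$ perturbations; (c) the $-\eta_i|\cdot|^2$ pieces inside $\mathcal{E}_\eta$ stay dominated, so that $\mathcal{E}_\eta$ is equivalent to $|\vr|^2_{H^s}+|\u|^2_{H^s}+|\dot{\dr}|^2_{H^s}+|\nabla\dr|^2_{H^s}$ with constants comparable to $1$. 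It then remains to re-examine the baseline right-hand side and the cubic errors generated above with the now-stronger dissipation $\mathcal{D}_\eta$---which controls the full $H^s$-norms of $\nabla\u$ and $\div\u$, of $\nabla\vr$, and of $\nabla^2\dr$, together with the damped combination of $\dot{\dr}$---and, using Moser's inequality, the density bounds, and the smallness of $\mathcal{E}_\eta$ to reabsorb any leftover $\mathcal{D}_\eta$-term carrying a small coefficient, reorganize everything into $C\mathcal{D}_\eta\sum_{k=1}^{s+4}\mathcal{E}_\eta^{k/2}$. I expect requirement (a) for the $\nabla\dr$-interaction to be the crux: the terms mixing $\nabla^s\dot{\dr}$ with $\nabla^s\dr$ and the $\lambda_2$-terms linear in $\nabla\u$ carry no spare power of $\mathcal{E}_\eta^{1/2}$ at first sight, and it is only their regrouping into the Leslie square together with the constraint identity for $\dr\cdot\nabla^k\dr$ that renders them higher order; arranging the hierarchy of $\eta_1,\eta_2$ and the Young weights so that nothing is left uncontrolled is where the real work lies.
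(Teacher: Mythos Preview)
Your proposal is correct and follows the paper's strategy exactly: supplement the baseline $H^s$ estimate with two auxiliary estimates---testing the momentum equation against $\nabla^{k+1}\vr$ (your $\u+\nabla\vr$ multiplier) for $0\le k\le s-1$ to produce the $\tfrac{p'(1+\vr)}{1+\vr}|\nabla^{k+1}\vr|^2$ dissipation, and testing the director equation against $\nabla^k\dr$ (your $\dot{\dr}+\dr$ multiplier) for $1\le k\le s$ to produce the $\tfrac{1}{1+\vr}|\nabla^{k+1}\dr|^2$ dissipation---then choosing $\eta_1,\eta_2$ small enough that all Young remnants are absorbed.

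One minor difference worth noting: for the dangerous pairings $\bigl\langle(\lambda_1\nabla^k\B+\lambda_2\nabla^k\A)\dr,\nabla^k\dr\bigr\rangle$ and the $-\lambda_2(\dr^\top\A\dr)\dr$ piece of $\widetilde\Gamma$, the paper does \emph{not} use your constraint identity $\dr\cdot\nabla^k\dr=-\sum\nabla^a\dr\cdot\nabla^b\dr$. Instead it integrates by parts once to trade $\nabla^k$ for $\nabla^{k-1}$ on the $\u$-factor and $\nabla^{k+1}$ on the $\dr$-factor, obtaining a bilinear term $C(|\lambda_1|+|\lambda_2|)|\nabla\u|_{H^s}|\nabla\dr|_{\dot H^s}$ which is then split by Young into $\tfrac18\sum_k|\tfrac{1}{\sqrt{1+\vr}}\nabla^{k+1}\dr|_{L^2}^2$ and a multiple of $|\nabla\u|_{H^s}^2$, the latter absorbed by $\tfrac14\mu_4|\nabla\u|_{H^s}^2$ for $\eta_2$ small. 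Your constraint trick works for the $\widetilde\Gamma$ piece (and is actually cleaner there, giving a directly cubic bound) but does not apply to $(\nabla^k\B)\dr\cdot\nabla^k\dr$, which has no $\dr\cdot\nabla^k\dr$ structure; for that term you will indeed need the Young/absorption route you describe, matching the paper. Similarly, the paper handles $\lambda_1\langle\tfrac{1}{1+\vr}\nabla^k\dot{\dr},\nabla^k\dr\rangle$ by one integration by parts and Young, producing a $|\lambda_1|^2|\dot{\dr}|_{H^s}^2$ term that is absorbed by the Leslie square; your sketch of this step is correct.
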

\begin{proof}
This proof can be divided into four parts. Firstly, we get the $L^2$-estimate of \eqref{Expan-System}, which is different with the {\it basic energy law} in section \ref{Sect-2}. Secondly, the higher order estimate can be obtained by using the similar way as the {\em a priori} estimate in Section \ref{Sect-3}. Thirdly, we can get some dissipation of the density $\vr$ by multiplying $\nabla^k \eqref{Expan-System}_2$ with $\nabla^{k+1} \rho$ $(0 \leq k \leq s-1)$. Lastly, multiplying the $k$-order derivative of the third equation of \eqref{Expan-System} with $\nabla \dr$ $(1 \leq k \leq s)$ gives us more dissipation about $\nabla \dr$.

{\it Step 1. Basic energy estimate.} For the estimate of the $L^2$-estimate of the system \eqref{Expan-System}, multiplying the three equations of \eqref{Expan-System} with $\tfrac{p'(1+\vr)}{1+\vr} \vr$, $(1+\vr) \u$ and $(1+\vr) \dot{\dr}$, respectively, then adding the result identities together and taking $L^2$-inner product with respect to $x$ over $\R^N$, we have
\begin{align}\label{Expan-L-2-0}
  \no & \tfrac{1}{2} \tfrac{\d}{\d t} \int_{\R^N} \tfrac{p'(1+\vr)}{1+\vr} |\vr|^2 + (1+\vr) ( |\u|^2 + |\dot{\dr}|^2 ) + |\nabla \dr|^2 \d x + \tfrac{1}{2} \mu_4 |\nabla \u|^2_{L^2} + (\tfrac{1}{2} \mu_4 + \xi) |\div \u|^2_{L^2} \\
  \no & + \l p'(1+\vr) \nabla \vr, \u \r + \l p'(1+\vr) \vr, \div \u \r + \l \Delta \dr, \u \cdot \nabla \dr \r + \l \Sigma_2, \nabla \u \r + \l \Sigma_3, \nabla \u \r \\
  & - \lambda_1 |\dot{\dr}|^2_{L^2} - \l \lambda_1 \B \dr + \lambda_2 \A \dr, \dot{\dr} \r
  = \tfrac{1}{2} \int_{\R^N} \Big[ \p_t \Big( \tfrac{p'(1+\vr)}{1+\vr} \Big) + \div  \Big( \tfrac{p'(1+\vr)}{1+\vr} \u \Big) \Big] |\vr|^2 \d x\,.
\end{align}
We now deal with the above identity. Direct calculation enables us to get
\begin{align*}
  & \l p'(1+\vr) \vr, \div \u \r + \l p'(1+\vr) \nabla \vr, \u \r
  = - \l p''(1+\vr) \nabla \vr \vr, \u \r
  \lesssim |\nabla \vr|_{L^2} |\vr|_{L^4} |\u|_{L^4} \\
  \lesssim & |\nabla \vr|_{L^2} ( |\vr|_{L^2} |\u|_{L^2} )^{1-\frac{N}{4}} ( |\nabla \vr|_{L^2} |\nabla \u|_{L^2} )^{\frac{N}{4}}
  \leq |\nabla \vr|_{L^2} ( |\nabla \vr|_{L^2} |\nabla \u|_{L^2} )^{\frac{1}{2}} ( |\vr|_{H^1} |\u|_{H^1} )^{\frac{1}{2}} \,,
\end{align*}
where we take use of H\"older inequality and the interpolation inequality: $|f|_{L^4(\R^N)} \lesssim |f|^{1-\frac{N}{4}}_{L^2(\R^N)} |f|^{\frac{N}{4}}_{L^2(\R^N)}$ for any $f \in H^1 (\R^N) \ (N=2,3)$. For the term on the right-hand side of \eqref{Expan-L-2-0}, by using the first equation of \eqref{Expan-System}, we can easily get \begin{align*}
  \tfrac{1}{2} & \int_{\R^N} \! \Big[ \p_t \Big( \tfrac{p'(1+\vr)}{1+\vr} \Big) \!+\! \div  \Big( \tfrac{p'(1+\vr)}{1+\vr} \u \Big) \Big] |\vr|^2 \d x
  = \tfrac{1}{2} \int_{\R^N} \! \Big[ \tfrac{p'(1+\vr)}{1+\vr} \!-\! \Big( \tfrac{p'(1+\vr)}{1+\vr} \Big)' (1+\vr) \Big] \div \u |\vr|^2 \d x \\
   \lesssim & |\div \u|_{L^2} |\vr|_{L^4}^2 \lesssim |\nabla \u|_{L^2} |\nabla \vr|_{L^2} |\vr|_{H^1} \,.
\end{align*}
The other terms in \eqref{Expan-L-2-0} can be dealt with by using the same calculation of the basic energy law in Section 2. Hence we have
\begin{align}\label{Expan-L-2}
  \no & \tfrac{1}{2} \tfrac{\d}{\d t} \int_{\R^N} \tfrac{p'(1+\vr)}{1+\vr} |\vr|^2 + (1+\vr) ( |\u|^2 + |\dot{\dr}|^2 ) + |\nabla \dr|^2 \d x + \tfrac{1}{2} \mu_4 |\nabla \u|^2_{L^2} + (\tfrac{1}{2} \mu_4 + \xi) |\div \u|^2_{L^2} \\
  \no & + \mu_1 |\dr^\top \A \dr|_{L^2}^2 - \lambda_1 |\dot{\dr} + \B \dr + \tfrac{\lambda_2}{\lambda_1}|_{L^2}^2 + ( \mu_5 + \mu_6 + \tfrac{\lambda_2^2}{\lambda_1} ) |\A \dr|_{L^2}^2 \\
  \lesssim & |\nabla \u|_{L^2} |\nabla \vr|_{L^2} |\vr|_{H^1} + |\nabla \vr|_{L^2} ( |\nabla \vr|_{L^2} |\nabla \u|_{L^2} )^{\frac{1}{2}} ( |\vr|_{H^1} |\u|_{H^1} )^{\frac{1}{2}} \,.
\end{align}

 {\it Step 2. Higher order estimate for $\vr$, $\u$ and $\dot{\dr}$.} For the higher order estimate, we can use the same method as the {\em a priori} estimate in Section \ref{Sect-3}, the different is that here $1+\vr$ has the uniform bounds since $(\rho, \u, \dr)$ is near the equilibrium state $(1, 0, \dr_0)$. Hence similar calculation as the {\em a priori} estimate in Section \ref{Sect-3} tells we that
\begin{align}\label{Expan-H-k}
  \no & \tfrac{1}{2} \tfrac{\d}{\d t} \int_{\mathbb{R}^N} \tfrac{p'(1+\vr)}{1+\vr} |\nabla^k \vr|^2 + (1+\vr) ( |\nabla^k \u|^2 \!+\! |\nabla^k \dot{\dr}|^2 ) + |\nabla^{k+1} \dr|^2 \d x + \tfrac{1}{2} \mu_4 |\nabla^{k+1} \u|_{L^2}^2 \\
  \no & + ( \tfrac{1}{2} \mu_4 + \xi ) |\nabla^k \div \u|_{L^2}^2 + \mu_1 |\dr^\top (\nabla^k \A) \dr|^2_{L^2} - \lambda_1 |\nabla^k \dot{\dr} + (\nabla^k \B) \dr + \tfrac{\lambda_2}{\lambda_1} (\nabla^k \A) \dr |^2_{L^2} \\
  & + (\mu_5 + \mu_6 + \tfrac{\lambda_2^2}{\lambda_1}) |(\nabla^k \A) \dr|^2_{L^2}
  \lesssim \hat{I} + \hat{J} \,,
\end{align}
where
  \begin{align*}
    \hat{I} = & |\vr|^2_{\dot{H}^s} ( 1 + |\vr|_{\dot{H}^s} ) |\u|_{H^s} + |\nabla \dr|^2_{\dot{H}^s} |\nabla \u|_{H^s} + ( 1 + |\vr|_{\dot{H}^s} ) |\nabla \dr|_{H^s} |\dot{\dr}|^3_{H^s} \\
    & + |\nabla \dr|^2_{H^s} |\nabla \dr|_{\dot{H}^s} |\dot{\dr}|_{H^s} + \mu \sum_{l=1}^{4} |\nabla \dr|^l_{H^s} ( |\u|_{\dot{H}^s} + |\dot{\dr}|_{H^s} ) |\nabla \u|_{H^s} \, .
  \end{align*}
and
  \begin{align*}
    \hat{J} \lesssim & |\u|_{\dot{H}^s} (|\u|^2_{\dot{H}^s} + |\dot{\dr}|^2_{H^s}) + \P_s(|\vr|_{\dot{H}^s}) ( |\vr|_{\dot{H}^s} |\u|_{\dot{H}^s} + |\nabla \dr|_{\dot{H}^s} |\dot{\dr}|_{H^s} ) + ( 1 + |\vr|_{\dot{H}^s} ) |\vr|^2_{\dot{H}^s} |\u|_{H^s} \\[2mm]
    & + (\mu_4 + \xi) \P_s (|\vr|_{\dot{H}^s}) |\nabla \u|_{\dot{H}^s}  |\u|_{\dot{H}^s} + \P_s (|\vr|_{\dot{H}^s}) |\u|_{\dot{H}^s} |\nabla \dr|_{H^s} |\nabla \dr|_{\dot{H}^s} \\
    & + \mu \P_s (|\vr|_{\dot{H}^s}) |\u|_{\dot{H}^s} ( |\nabla \u|_{H^s} + |\dot{\dr}|_{H^s} ) \sum_{l=0}^{4} |\nabla \dr|^l_{H^s} + |\lambda_1| \P_s (|\vr|_{\dot{H}^s}) |\dot{\dr}|^2_{H^s} \\
    & + \P_s (|\vr|_{\dot{H}^s}) \Big\{ ( |\dot{\dr}|^2_{H^s} + |\nabla \dr|^2_{H^s} ) (1 + |\nabla \dr|_{H^s}) + |\lambda_2| |\u|_{\dot{H}^s} \sum_{l=0}^{3} |\nabla \dr|^l_{H^s} \Big\} \, .
  \end{align*}
  Then summing up with $1 \leq k \leq s$ to \eqref{Expan-H-k}, and combining the $L^2$-estimate \eqref{Expan-L-2} of \eqref{Expan-System}, we have
  \begin{align}\label{Expan-H-s}
  \no & \tfrac{1}{2} \Big[ |\vr|^2_{H^s_{\frac{p'(1+\vr)}{1+\vr}}} + ( |\nabla \u|^2_{H^s_{1+\vr}} \!+\! |\dot{\dr}|^2_{H^s_{1+\vr}} ) + |\nabla \dr|^2_{H^s} \Big] + \tfrac{1}{2} \mu_4 |\nabla \u|_{H^s}^2 + ( \tfrac{1}{2} \mu_4 + \xi ) |\div \u|_{H^s}^2 \\
  \no & + \mu_1 \sum_{k=0}^{s} |\dr^\top (\nabla^k \A) \dr|^2_{L^2} - \lambda_1 \sum_{k=0}^{s} |\nabla^k \dot{\dr} + (\nabla^k \B) \dr + \tfrac{\lambda_2}{\lambda_1} (\nabla^k \A) \dr |^2_{L^2} \\
  \no & + (\mu_5 + \mu_6 + \tfrac{\lambda_2^2}{\lambda_1}) \sum_{k=0}^{s} |(\nabla^k \A) \dr|^2_{L^2} \\
  \lesssim & \hat{I} + \hat{J} + |\nabla \u|_{L^2} |\nabla \vr|_{L^2} |\vr|_{H^1} + |\nabla \vr|_{L^2} ( |\nabla \vr|_{L^2} |\nabla \u|_{L^2} )^{\frac{1}{2}} ( |\vr|_{H^1} |\u|_{H^1} )^{\frac{1}{2}} \,.
\end{align}
{\it Step 3. The estimate for dissipation of density.} We should get some dissipation of $\vr$. Since $p'(1+\vr)>0$, then we can use the term $\tfrac{p'(1+\vr)}{1+\vr} \nabla \vr$ to get the dissipation. To do it, for all $1 \leq k \leq s-1$, taking $\nabla^k$ on the second equation of \eqref{Expan-System}, multiplying $\nabla^{k+1} \vr$ by integration by parts, we obtain
  \begin{align}\label{Dissip-rho-0}
    \no & \l \p_t \nabla^k \u, \nabla^{k+1} \vr \r + \l \tfrac{p'(1+\vr)}{1+\vr} \nabla^{k+1} \vr, \nabla^{k+1} \vr \r \\
    \no = & - \l \nabla^k (\u \cdot \nabla \u), \nabla^{k+1} \vr \r - \l [\nabla^k, \tfrac{p'(1+\vr)}{1+\vr} \nabla] \vr, \nabla^{k+1} \vr \r \\
    \no & + \l \nabla^k ( (\tfrac{1}{1 + \vr}) \div \Sigma_1 ), \nabla^{k+1} \vr \r + \l \nabla^k ( (\tfrac{1}{1 + \vr}) \div \Sigma_2 ),   \nabla^{k+1} \vr \r \\
    \no & + \l \nabla^k ( (\tfrac{1}{1 + \vr}) \div \Sigma_3 ), \nabla^{k+1} \vr \r \\
    \equiv & R_1 + R_2 + R_3 + R_4 + R_5 \, .
  \end{align}

For the first term on the left-hand side of \eqref{Dissip-rho-0}, by using the first equation of \eqref{Expan-System}, it is easy to get
\begin{align}\label{Dissip-rho-1}
   \no & \l \p_t \nabla^k \u, \nabla^{k+1} \vr \r = \p_t \l \nabla^k \u, \nabla^{k+1} \vr \r - \l \nabla^k \u, \nabla^{k+1} \p_t \vr \r \\
   \no = & \p_t \l \nabla^k \u, \nabla^{k+1} \vr \r + \l \nabla^k \u, \nabla^{k+1} (\u \cdot \nabla \vr + (1+\vr) \div \u) \r \\
   \no = & \p_t \l \nabla^k \u, \nabla^{k+1} \vr \r - \l \nabla^k \div \u, \u \cdot \nabla^{k+1} \vr + (1+\vr) \nabla^k \div \u \r \\
   & - \l [\nabla^k, \u \cdot \nabla] \vr + [\nabla^k, (1+\vr) \div] \u, \nabla^k \div \u \r \, .
\end{align}

Straightforward calculation tells us that
\begin{align}\label{Dissip-rho-2}
  \no & - \l \nabla^k \div \u, \u \cdot \nabla^{k+1} \vr + (1+\vr) \nabla^k \div \u \r \\
  \no \leq &  ( |\u|_{L^\infty} |\nabla^{k+1} \vr|_{L^2} + |1+\vr|_{L^\infty} |\nabla^k \div \u|_{L^2} ) |\nabla^k \div \u|_{L^2} \\
  \leq & | 1+\vr |_{L^\infty} | \div \u |_{H^s}^2 + C |\u|_{H^s} |\div \u|_{H^s} |\vr|_{\dot{H}^s}
  %\lesssim & |\nabla \u|_{H^s} ( |\nabla \u|_{H^s} + |\u|_{H^s} |\vr|_{\dot{H}^s} )
\end{align}
and
\begin{align}\label{Dissip-rho-3}
  \no & - \l [\nabla^k, \u \cdot \nabla] \vr + [\nabla^k, (1+\vr) \div] \u, \nabla^k \div \u \r \\
  \no \leq & \big(  |\nabla \u|_{\infty} |\nabla^k \vr|_{L^2} + |\nabla^k \u|_{L^2} |\nabla \vr|_{L^\infty} \big) |\nabla^{k+1} \u|_{L^2} \\
  \leq & |\vr|_{\dot{H}^s} |\nabla \u|^2_{H^s} \, .
\end{align}

Then by using H\"older inequality and Sobolev embedding theory, we have the estimates of $R_i (i=1,...,5)$ in the following.

For the estimate of $R_1$ and $R_2$, and note that $1 \leq k \leq s-1$, we can get
\begin{align}\label{Dissip-rho-R-1}
  R_1 = - \l \nabla^k (\u \cdot \nabla \u), \nabla^{k+1} \vr \r
  \leq \sum_{a+b=k} |\nabla^a \u|_{L^4} |\nabla^{b+1} \u|_{L^4} |\nabla^{k+1} \vr|_{L^2} \lesssim |\u|_{H^s} |\nabla \u|_{H^s} |\vr|_{\dot{H}^s}
\end{align}
and
\begin{align}\label{Dissip-rho-R-2}
  \no R_2 = & - \l [\nabla^k, \tfrac{p'(1+\vr)}{1+\vr} \nabla] \vr, \nabla^{k+1} \vr \r  \\
  \no \lesssim & ( |\nabla (\tfrac{p'(1+\vr)}{1+\vr})|_{L^\infty} |\nabla^k \vr|_{L^2} + |\nabla^k ( \tfrac{p'(1+\vr)}{1+\vr} )|_{L^2} |\nabla \vr|_{L^\infty} ) |\nabla^{k+1} \vr|_{L^2} \\
  \no \lesssim & ( |\vr|_{\dot{H}^s} |\vr|_{\dot{H}^s} + \P_{s-1} (|\vr|_{\dot{H}^s}) |\vr|_{\dot{H}^s} ) |\vr|_{\dot{H}^s} \\
  \lesssim & \P_{s-1} (|\vr|_{\dot{H}^s}) |\vr|^2_{\dot{H}^s} \, .
\end{align}
by utilizing H\"older inequality, Sobolev embedding theory and Moser-type inequality.

 As to the estimate of $R_3$, we need only to estimate $\l \tfrac{\mu_4}{2} \nabla^k (\tfrac{1}{1+\vr} \Delta \u), \nabla^{k+1} \vr \r$. it is easy to derive that
\begin{align*}
  & \l \tfrac{\mu_4}{2} \nabla^k (\tfrac{1}{1 + \vr} \Delta \u), \nabla^{k+1} \vr \r
  =\tfrac 1 2 \mu_4 \l \tfrac{1}{1 + \vr} \nabla^k \Delta \u, \nabla^{k+1} \vr \r + \tfrac 1 2 \mu_4 \l [ \nabla^k, \tfrac{1}{1 + \vr} \Delta ] \u, \nabla^{k+1} \vr \r \\
  \lesssim & \mu_4 |\nabla^{k+2} \u|_{L^2} |\nabla^{k+1} \vr|_{L^2} + \mu_4 ( |\nabla (\tfrac{1}{1+\vr})|_{L^\infty} |\nabla^{k+1} \u|_{L^2} + |\nabla^k (\tfrac{1}{1+\vr})|_{L^2} |\Delta \u|_{L^\infty} ) |\nabla^{k+1} \vr|_{L^2} \\
  \lesssim & \mu_4 (1 + \P_{s-1} (|\vr|_{\dot{H}^{s}})) |\nabla \u|_{\dot{H}^s} |\vr|_{\dot{H}^s} \, .
\end{align*}
And another term in $R_3$ can be controlled by the same way, then we have
\begin{align}\label{Dissip-rho-R-3}
  R_3 \lesssim (\mu_4 + \xi)(1 + \P_{s-1} (|\vr|_{\dot{H}^{s}})) |\nabla \u|_{\dot{H}^s} |\vr|_{\dot{H}^s} \, .
\end{align}

For the estimate of the term $R_4$, since the two parts of it have the same structure, we need only to estimate $\l \nabla^k ( \tfrac{1}{1 + \vr} \div (\nabla \dr \odot \nabla \dr)), \nabla^{k+1} \vr \r$. Straightforward calculation gives us
\begin{align*}
  & \l \nabla^k (\tfrac{1}{1+\vr} \div (\nabla \dr \odot \nabla \dr) ), \nabla^{k+1} \vr \r \\
  = & \l \tfrac{1}{1+\vr} \nabla^k \div (\nabla \dr \odot \nabla \dr), \nabla^{k+1} \vr \r + \l [\nabla^k, \tfrac{1}{1+\vr} \div] (\nabla \dr \odot \nabla \dr), \nabla^{k+1} \vr \r \\
  \lesssim & 2 \l |\nabla \dr| |\nabla^{k+2} \dr|, |\nabla^{k+1} \vr| \r + \sum_{\substack{a+b=k+1,\\ a, b \geq 1}} \l |\nabla^{a+1} \dr| |\nabla^{b+1} \dr|, |\nabla^{k+1} \vr| \r \\
  & \!+\! ( |\nabla(\tfrac{1}{1+\vr})|_{L^\infty} |\nabla^{k-1} \div (\nabla \dr \!\odot\! \nabla \dr)|_{L^2} \!+\! |\nabla^k (\tfrac{1}{1+\vr})|_{L^2} |\div(\nabla \dr \!\odot\! \nabla \dr)|_{L^\infty} ) |\nabla^{k+1} \vr|_{L^2} \\
  \lesssim & |\nabla \dr|_{L^\infty} |\nabla^{k+2} \dr|_{L^2} |\nabla^{k+1} \vr|_{L^2} + \sum_{\substack{a+b=k+1,\\ a, b \geq 1}} |\nabla^{a+1} \dr|_{L^4} |\nabla^{b+1} \dr|_{L^4} |\nabla^{k+1} \vr|_{L^2} \\
  & + |\nabla \vr|_{L^\infty} ( |\nabla \dr|_{L^\infty} |\nabla^{k+1} \dr|_{L^2} + \sum_{\substack{a+b=k,\\ a, b \geq 1}} |\nabla^{a+1} \dr|_{L^4} |\nabla^{b+1} \dr|_{L^4} ) |\nabla^{k+1} \vr|_{L^2} \\
  & + \P_{s-1} (|\vr|_{\dot{H}^s}) |\nabla^{k+1} \vr|_{L^2} |\nabla \dr|_{L^\infty} |\nabla^2 \dr|_{L^\infty} \\
  \lesssim & |\vr|_{\dot{H}^s} |\nabla \dr|_{\dot{H}^s} |\nabla \dr|_{H^s} ( 1 + \P_{s-1} (|\vr|_{\dot{H}^s}) ) \, .
\end{align*}
Then we can infer that
\begin{align}\label{Dissip-rho-R-4}
  R_4 \lesssim |\vr|_{\dot{H}^s} |\nabla \dr|_{\dot{H}^s} |\nabla \dr|_{H^s} ( 1 + \P_{s-1} (|\vr|_{\dot{H}^s}) ) \, .
\end{align}

We now estimate the last term $R_5$. By using the Lemma \ref{Moser}, one has
\begin{align}\label{Dissip-rho-R-5-1}
  \no R_5 = & \l \tfrac{1}{1+\vr} \nabla^k \div \sigma, \nabla^{k+1} \vr \r + \l [ \nabla^k, \tfrac{1}{1+\vr} \div ] \sigma, \nabla^{k+1} \vr \r \\
  \no \leq & |\tfrac{1}{1+\vr}|_{L^\infty} |\nabla^k \div \sigma|_{L^2} |\nabla^{k+1} \vr|_{L^2} + ( |\nabla (\tfrac{1}{1+\vr})|_{L^\infty} |\nabla^{k-1} \div \sigma|_{L^2} \\
  & + |\nabla^k (\tfrac{1}{1+\vr})|_{L^2} |\div \sigma|_{L^\infty} ) |\nabla^{k+1} \vr|_{L^2} \, .
\end{align}
So if we want get the bound of $R_5$, we should get the bounds of $|\nabla^k \div \sigma|_{L^2}$ and $|\div \sigma|_{L^\infty}$. In order to get the bounds of $|\nabla^k \div \sigma|_{L^2}$ and $|\div \sigma|_{L^\infty}$, we just need to estimate the following two terms:
\begin{align*}
  & | \nabla^k (\dr_i \dr_j \dr_p \dr_q \p_i \u_j) |_{L^2} \\
  \leq & | \nabla^{k+1} \u |_{L^2} + \sum_{\substack{a+b = k,\\ a \geq 1}} \sum_{\substack{a_1 + a_2 + a_3 + a_4 = a, \\ a_1, a_2, a_3, a_4 \geq 1}} |\nabla^{a_1} \dr \nabla^{a_2} \dr \nabla^{a_3} \dr \nabla^{a_4} \dr \nabla^{b+1} \u |_{L^2} \\
  & + C^1_4 \sum_{\substack{a+b = k, \\ a \geq 1}} \sum_{\substack{a_1 + a_2 + a_3 = a, \\ a_1, a_2, a_3 \geq 1}} |\nabla^{a_1} \dr \nabla^{a_2} \dr \nabla^{a_3} \dr \nabla^{b+1} \u |_{L^2} \\
  & \!+\! C^1_4 C^1_3 \!\sum_{\substack{a+b = k, \\ a \geq 1}}\! \sum_{\substack{a_1 + a_2  = a, \\ a_1, a_2 \geq 1}} |\nabla^{a_1} \dr \nabla^{a_2} \dr \nabla^{b+1} \u|_{L^2} \!+\! C^1_4 C^1_3 C^1_2 \!\sum_{\substack{a+b = k, \\ a \geq 1}}\! |\nabla^a \dr \nabla^{b+1} \u|_{L^2} \\
  \lesssim & |\nabla \u|_{H^s} \sum^{4}_{l=0} |\nabla \dr|^l_{H^s}
\end{align*}
for $1 \leq k \leq s$, and
\begin{align*}
  |\div (\dr_i \dr_j \dr_p \dr_q \p_i \u_j)|_{L^\infty} \lesssim & |\nabla^2 \u|_{L^\infty} + |\nabla \dr|_{L^\infty} |\nabla \u|_{L^\infty}
  \lesssim |\nabla \u|_{H^s} (1 + |\nabla \dr|_{H^s}) \, .
\end{align*}
The other terms can be controlled by the same way, then we have
\begin{align*}
  |\nabla^k \div \sigma|_{L^2}
  \lesssim \mu \big( |\nabla \u|_{H^s} \sum_{l=0}^{4} |\nabla \dr|_{H^s}^l + |\dot{\dr}|_{H^s} ( |\nabla \dr|_{H^s} + 1 ) \big)
\end{align*}
and
\begin{align*}
  |\div \sigma|_{L^\infty} \leq \mu \big( |\nabla \u|_{H^s} + |\dot{\dr}|_{H^s} \big) \big( |\nabla \dr|_{H^s} + 1 \big) \, .
\end{align*}
Combining with the above two bounds and \eqref{Dissip-rho-R-5-1}, we obtain
\begin{align}\label{Dissip-rho-R-5}
  R_5 \lesssim & \mu |\vr|_{\dot{H}^s} (1+\P_{s-1} (|\vr|_{\dot{H}^s})) \big( |\nabla \u|_{H^s} \sum_{l=0}^{4} |\nabla \dr|^l_{H^s} + |\dot{\dr}|_{H^s} (1 + |\nabla \dr|_{H^s}) \big) \,.
\end{align}

For the case $k =0$, it is easy to derive that
\begin{align}\label{Dissip-rho-k-0}
  \no & \tfrac{1}{2} \tfrac{\d}{\d t} ( |\u + \nabla \vr|_{L^2}^2 - |\u|_{L^2}^2 - |\nabla \vr|_{L^2}^2 ) + \int_{\mathbb{R}^N} \tfrac{p'(1+\vr)}{1+\vr} |\nabla \vr|^2 \d x - |1+\vr|_{L^\infty} |\div \u|_{L^2}^2 \\
  \no \lesssim & |\nabla \u|_{L^2} |\u|_{L^\infty} |\nabla \vr|_{L^2} + ( \mu_4 + \xi ) |\nabla^2 \u|_{L^2} |\nabla \vr|_{L^2} + |\nabla \dr|_{L^\infty} |\nabla^2 \dr|_{L^2} |\nabla \vr|_{L^2} \\
  & + \mu |\nabla \vr|_{L^2} ( |\nabla \u|_{H^s} + |\dot{\dr}|_{H^s} ) ( 1 + |\nabla \dr|_{H^s} )\,.
\end{align}

Noticing that the first term on the right-hand side of \eqref{Dissip-rho-1} can be rewritten as
\begin{equation}\label{Dissop-rho-4}
  \p_t \l \nabla^k \u, \nabla^{k+1} \vr \r
  = \tfrac {1}{2} \tfrac{\d}{\d t} ( |\nabla^k (\u + \nabla \vr)|^2_{L^2} - |\nabla^k \u|^2_{L^2} - |\nabla^{k+1} \vr|^2_{L^2} )\, .
\end{equation}
Consequently, substituting \eqref{Dissip-rho-1}-\eqref{Dissip-rho-3}, \eqref{Dissip-rho-R-1}-\eqref{Dissip-rho-R-5} and \eqref{Dissop-rho-4} into \eqref{Dissip-rho-0}, then summing up with $1 \leq k \leq s-1$, and combining with \eqref{Dissip-rho-k-0} yields that
\begin{align}\label{Dissip-rho-1}
  & \tfrac 1 2 \tfrac{\d}{\d t} ( | \u + \nabla \vr|^2_{H^{s-1}} - |\u|^2_{H^{s-1}} - |\nabla \vr|^2_{H^{s-1}} ) + \sum_{k=1}^{s} \int_{\mathbb{R}^N} \tfrac{p'(1+\vr)}{1+\vr} |\nabla^k \vr|^2 \d x \\
  \no & - |1+\vr|_{L^\infty} |\div \u|_{H^s}^2 - \hat{C} (\mu+\xi) |\vr|_{\dot{H}^s} ( |\nabla \u|_{H^s} + |\dot{\dr}|_{H^s} ) \\
  \no \lesssim & |\nabla \u|_{H^s} |\vr|_{\dot{H}^s} ( |\u|_{H^s} \!+\! |\nabla \u|_{H^s} )
  \!+\! \P_{s-1} (|\vr|_{\dot{H}^s}) |\vr|_{\dot{H}^s} ( |\vr|_{\dot{H}^s} + |\nabla \u|_{H^s} + |\dot{\dr}|_{H^s} )\\
%  & \!+\! |\vr|_{\dot{H}^s} |\nabla \dr|_{\dot{H}^s} |\nabla \dr|_{H^s} ( 1 \!+\! \P_{s-1} (|\vr|_{\dot{H}^s}) ) \\
  \no & + \! |\vr|_{\dot{H}^s} (1\! +\! \P_{s-1} (|\vr|_{\dot{H}^s})) \Big[  (\mu \! + \! \xi) \Big( |\nabla \u|_{H^s} \sum_{l=1}^{4}\!  |\nabla \dr|^l_{H^s} \! + \! |\dot{\dr}|_{H^s} |\nabla \dr|_{H^s} \Big) \! +\!  |\nabla \dr|_{H^s} |\nabla \dr|_{\dot{H}^s} \Big]\, .
\end{align}
Noticing that
\begin{align*}
  & \hat{C} (\mu+\xi) |\vr|_{\dot{H}^s} ( |\nabla \u|_{H^s} + |\dot{\dr}|_{H^s} ) \\
  \leq & \tfrac{1}{4} \sum_{k=1}^{s} \int_{\mathbb{R}^N} \tfrac{p'(1+\vr)}{1+\vr} |\nabla^k \vr|^2 \d x + C (\mu^2 + \xi^2) |\tfrac{1+\vr}{p'(1+\vr)}|_{L^\infty} ( |\nabla \u|_{H^s}^2 + |\dot{\dr}|_{H^s}^2 ) \,,
\end{align*}
then we have
\begin{align}\label{Dissip-rho}
  \no & \tfrac 1 2 \tfrac{\d}{\d t} ( | \u + \nabla \vr|^2_{H^{s-1}} - |\u|^2_{H^{s-1}} - |\nabla \vr|^2_{H^{s-1}} ) + \tfrac{3}{4} \sum_{l=1}^{s} \int_{\mathbb{R}^N} \tfrac{p'(1+\vr)}{1+\vr} |\nabla^l \vr|^2 \d x \\
  \no & - |1+\vr|_{L^\infty} |\div \u|_{H^s}^2 - C (\mu^2+\xi^2) |\tfrac{1+\vr}{p'(1+\vr)}|_{L^\infty} ( |\nabla \u|_{H^s}^2 + |\dot{\dr}|_{H^s}^2 ) \\
  \no \lesssim & |\nabla \u|_{H^s} |\vr|_{\dot{H}^s} ( |\u|_{H^s} \!+\! |\nabla \u|_{H^s} )
  \!+\! \P_{s-1} (|\vr|_{\dot{H}^s}) |\vr|_{\dot{H}^s} ( |\vr|_{\dot{H}^s} + |\nabla \u|_{H^s} + |\dot{\dr}|_{H^s} )\\
  \no & + \! |\vr|_{\dot{H}^s} (1\! +\! \P_{s-1} (|\vr|_{\dot{H}^s})) \Big[  (\mu \! + \! \xi) \Big( |\nabla \u|_{H^s} \sum_{l=1}^{4}\!  |\nabla \dr|^l_{H^s} \! + \! |\dot{\dr}|_{H^s} |\nabla \dr|_{H^s} \Big) \! +\!  |\nabla \dr|_{H^s} |\nabla \dr|_{\dot{H}^s} \Big] \\
  \leq & C_2 \mathcal{D}_{\eta} (t) \sum_{k=1}^{s+3} \mathcal{E}_{\eta}^{\frac{k}{3}} (t) \,.
\end{align}

{\it Step 4. The estimate for the dissipation of $\dr$.} Taking $\nabla^k ( 1 \leq k \leq s )$ on the orientation equation of the system \eqref{Expan-System}, and multiplying $\nabla^k \dr$ by integrating by parts, then taking advantage of the following identities
\begin{align*}
  \l \nabla^k \ddot{\dr}, \nabla^k \dr \r
  = \p_t \l \nabla^k \dot{\dr}, \nabla^k \dr \r - | \nabla^k \dot{\dr} |_{L^2}^2 + \l \nabla^k (\u \cdot \nabla \dr), \nabla^k \dot{\dr} \r + \l \nabla^k (\u \cdot \nabla \dot{\dr}), \nabla^k \dr \r
\end{align*}
%\begin{align*}
%  & \l \nabla^k \ddot{\dr}, (1+\vr) \nabla^k \dr \r \\
%  = & \p_t \l \nabla^k \dot{\dr}, (1 + \vr) \nabla^k \dr \r \!-\! \l \nabla^k \dot{\dr}, \p_t \vr \nabla^k \dr + (1+\vr) \nabla^k \p_t \dr \r \!+\! \l \nabla^k (\u \cdot \nabla \dot{\dr}), (1+\vr) \nabla^k \dr \r \\
%  = & \p_t \l \nabla^k \dot{\dr}, (1 + \vr) \nabla^k \dr \r + \l \nabla^k \dot{\dr}, [(1+\vr) \div \u + \u \cdot \nabla \vr] \nabla^k \dr \r - \l \nabla^k \dot{\dr}, (1+\vr) \nabla^k \dot{\dr} \r \\
%  & + \l \nabla^k (\u \cdot \nabla \dr), (1 + \vr) \nabla^k \dot{\dr} \r + \l \nabla^k (\u \cdot \nabla \dot{\dr}), (1 + \vr) \nabla^k \dr \r
%\end{align*}
and
\begin{align*}
   & \l \nabla^k (\tfrac{1}{1+\vr} \Delta \dr), \nabla^k \dr \r
   = \l \tfrac{1}{1+\vr} \nabla^k \Delta \dr, \nabla^k \dr \r + \l [\nabla^k, \tfrac{1}{1+\vr} \Delta] \dr, \nabla^k \dr \r \\
   = & - \int_{\R^N} (\tfrac{1}{1+\vr}) |\nabla^{k+1} \dr|^2 \d x - \l \nabla (\tfrac{1}{1+\vr}) \nabla^{k+1} \dr, \nabla^k \dr \r + \l [\nabla^k, \tfrac{1}{1+\vr} \Delta] \dr, \nabla^k \dr \r
\end{align*}
%\begin{align*}
%   \l \nabla^k (\tfrac{1}{1+\vr} \Delta \dr), (1+\vr) \nabla^k \dr \r = - |\nabla^{k+1} \dr|_{L^2}^2 + \sum_{\substack{a+b=k, \\ a \geq 1}} \l \nabla^a(\tfrac{1}{1+\vr}) \nabla^b \Delta \dr, (1+\vr) \nabla^k \dr \r
%\end{align*}
meanwhile, noticing that
\begin{align*}
    & \lambda_1 \l \nabla^k (\tfrac{1}{1+\vr} \dot{\dr}), \nabla^k \dr \r\\
  = & -\lambda_1 \l \tfrac{1}{1+\vr} \nabla^{k+1} \dr + \nabla (\tfrac{1}{1+\vr}) \nabla^k \dr, \nabla^{k-1} \dot{\dr} \r + \lambda_1 \sum_{\substack{a+b=k,\\a \geq  1}} \l \nabla^a (\tfrac{1}{1+\vr}) \nabla^b \dot{\dr}, \nabla^k \dr \r \, ,
\end{align*}
we then have
\begin{align}\label{Dissip-d-0}
  \no & \tfrac{1}{2} \tfrac{\d}{\d t} \big( |\nabla^k \dot{\dr} + \nabla^k \dr|^2 - |\nabla^k \dot{\dr}|^2 - |\nabla^k \dr|^2 \big) + |\tfrac{1}{\sqrt{1+\vr}} \nabla^{k+1} \dr|_{L^2}^2 - |\nabla^k \dot{\dr}|_{L^2}^2 \\
  \no = & -\l \nabla^k (\u \cdot \nabla \dr), \nabla^k \dot{\dr} \r - \l \nabla^k (\u \cdot \nabla \dot{\dr}), \nabla^k \dr \r - \l \nabla (\tfrac{1}{1+\vr}) \nabla^{k+1} \dr, \nabla^k \dr \r \\
  \no & + \l [\nabla^k, \tfrac{1}{1+\vr} \Delta] \dr, \nabla^k \dr \r - \lambda_1 \l \tfrac{1}{1+\vr} \nabla^{k+1} \dr + \nabla (\tfrac{1}{1+\vr}) \nabla^k \dr, \nabla^{k-1} \dot{\dr} \r \\
  \no & + \!\lambda_1 \!\sum_{\substack{a+b=k,\\a \geq  1}} \!\l \nabla^a (\tfrac{1}{1+\vr}) \nabla^b \dot{\dr}, \nabla^k \dr \r \!+ \!\l \nabla^k (\tfrac{1}{1+\vr} (\lambda_1 \B \!+\! \lambda_2 \A) \dr ), \nabla^k \dr \r \! + \!\l \nabla^k (\tfrac{1}{1+\vr} \widetilde{\Gamma} \dr), \nabla^k \dr \r \\
  \equiv & \sum_{i=1}^{8} \tilde{R}_{i} \,.
\end{align}

We now estimate \eqref{Dissip-d-0} term by term.

The term $\tilde{R}_1$ can be easily controlled as
\begin{align}\label{d-R-1}
  \no \tilde{R}_1 = & \l \u \cdot \nabla^{k+1} \dr, \nabla^k \dot{\dr} \r + \sum_{\substack{a+b=k,\\a \geq 1}} \l \nabla^a \u \nabla^{b+1} \dr, \nabla^k \dot{\dr} \r \\
  \no \lesssim & |\u|_{L^\infty} |\nabla^{k+1} \dr|_{L^2} |\nabla^k \dot{\dr}|_{L^2} + \sum_{\substack{a+b=k,\\a \geq 1}} |\nabla^a \u|_{L^4} |\nabla^{b+1} \dr|_{L^4} |\nabla^k \dot{\dr}|_{L^2} \\
  \lesssim & ( |\u|_{H^s} |\nabla \dr|_{\dot{H}^s} + |\nabla \u|_{H^s} |\nabla \dr|_{H^s} ) |\dot{\dr}|_{H^s}\,.
\end{align}
by taking advantage of H\"older inequality and Sobolev embedding theory.

Similar as the estimate of $\tilde{R}_1$, $\tilde{R}_2$ can be controlled as
\begin{align}\label{d-R-2}
  \no \tilde{R}_2 = & \l \u \cdot \nabla^{k+1} \dot{\dr}, \nabla^k \dr \r + \sum_{\substack{a+b=k,\\a \geq 1}} \l \nabla^a \u \nabla^{b+1} \dot{\dr}, \nabla^k \dr \r \\
  \no \lesssim & - \l \nabla^k \dot{\dr}, \nabla \u \nabla^k \dr + \u \nabla^{k+1} \dr \r + \sum_{\substack{a+b=k,\\a \geq 1}} \l \nabla^a \u \nabla^{b+1} \dot{\dr}, \nabla^k \dr \r \\
  \no \lesssim & |\nabla^k \dot{\dr}|_{L^2} ( |\nabla \u|_{L^4} |\nabla^k \dr|_{L^4} + |\u|_{L^\infty} |\nabla^{k+1} \dr|_{L^2} ) + \sum_{\substack{a+b=k,\\a \geq 1}} |\nabla^a \u|_{L^4} |\nabla^{b+1} \dot{\dr}|_{L^2} |\nabla^k \dr|_{L^4} \\
  \lesssim & |\dot{\dr}|_{H^s} ( |\nabla \u|_{H^s} |\nabla \dr|_{H^s} + |\u|_{H^s} |\nabla \dr|_{\dot{H}^s} )\,.
\end{align}

For the estimate of $\tilde{R}_3$, we have
\begin{align}\label{d-R-3}
  \tilde{R}_3 \lesssim |\nabla \vr|_{L^4} |\nabla^{k+1} \dr|_{L^2} |\nabla^k \dr|_{L^4}
  \lesssim |\vr|_{\dot{H}^s} |\nabla \dr|_{\dot{H}^s} |\nabla \dr|_{H^s} \,.
\end{align}

By using the Lemma \ref{Moser} and Sobolev embedding theory, one can deduce that
\begin{align}\label{d-R-4}
  \no \tilde{R}_4 \lesssim & ( |\nabla (\tfrac{1}{1+\vr})|_{L^\infty} |\nabla^{k-1} \Delta \dr|_{L^2} + |\nabla^k (\tfrac{1}{1+\vr})|_{L^2} |\Delta \dr|_{L^\infty} ) |\nabla^k \dr|_{L^2}
  \lesssim \P_{s} (|\vr|_{\dot{H}^s}) |\nabla \dr|_{\dot{H}^s} |\nabla \dr|_{H^s} \,.
\end{align}

We then turn to estimate the term $\tilde{R}_5$, note that
\begin{align*}
  -\lambda_1 \l \nabla(\tfrac{1}{1+\vr}) \nabla^k \dr, \nabla^{k-1} \dot{\dr} \r \lesssim |\lambda_1| |\vr|_{\dot{H}^s} |\dot{\dr}|_{H^s} |\nabla \dr|_{H^s}
\end{align*}
and
\begin{align*}
  -\lambda_1 \l (\tfrac{1}{1+\vr}) \nabla^{k+1} \dr, \nabla^{k-1} \dot{\dr} \r \leq \tfrac{1}{8} |(\tfrac{1}{\sqrt{1+\vr}}) \nabla^{k+1} \dr|_{L^2}^2 + 2 |\lambda_1|^2 |(\tfrac{1}{\sqrt{1+\vr}}) \nabla^{k-1} \dot{\dr}|_{L^2}^2\,,
\end{align*}
so
\begin{align}
  \tilde{R}_5 \leq \tfrac{1}{8} |(\tfrac{1}{\sqrt{1+\vr}}) \nabla^{k+1} \dr|_{L^2}^2 + C |\lambda_1| |\vr|_{\dot{H}^s} |\dot{\dr}|_{H^s} |\nabla \dr|_{H^s} + 2 |\lambda_1|^2 |\tfrac{1}{1+\vr}|_{L^\infty} |\nabla^{k-1} \dot{\dr}|_{L^s}^2 \,.
\end{align}

The term $\tilde{R}_6$ can be estimate by using the Moser-type inequality \eqref{Moser-type} and Sobolev embedding theory as follows:
\begin{align}\label{d-R-6}
  \tilde{R}_6 \!\lesssim \!|\lambda_1| ( |\nabla(\tfrac{1}{1+\vr})|_{L^\infty} |\nabla^{k-1} \dot{\dr}|_{L^2} \!+ \!|\nabla^k (\tfrac{1}{1+\vr})|_{L^2} |\dot{\dr}|_{L^\infty} ) |\nabla^k \dr|_{L^2} \!\lesssim\! |\lambda_1| \P_s(|\vr|_{\dot{H}^s}) |\dot{\dr}|_{H^s} |\nabla \dr|_{H^s}\,.
\end{align}

As to the term $\tilde{R}_7$, H\"older inequality and Sobolev embedding theory implies the following two bounds:
\begin{align*}
  & \l (\tfrac{1}{1+\vr}) \nabla^{k} \big( (\lambda_1 \B + \lambda_2 \A) \dr \big), \nabla^k \dr \r \\
  = & \l \nabla^{k-1} \big( (\lambda_1 \B + \lambda_2 \A) \dr \big), (\tfrac{1}{1+\vr}) \nabla^{k+1} \dr + \nabla (\tfrac{1}{1+\vr}) \nabla^k \dr \r \\
  \lesssim & (|\lambda_1|+|\lambda_2|) ( |\nabla \dr|_{\dot{H}^s} + |\vr|_{\dot{H}^s} |\nabla \dr|_{H^s} ) |\nabla \u|_{H^s} ( 1 + |\nabla \dr|_{H^s} )
\end{align*}
and
\begin{align*}
  & \sum_{\substack{a+b=k, \\a \geq 1}} \l \nabla^a (\tfrac{1}{1+\vr}) \nabla^b \big((\lambda_1 \B + \lambda_2 \A) \dr\big), \nabla^k \dr \r \\
  \lesssim & (|\lambda_1|+|\lambda_2|) \P_s (|\vr|_{\dot{H}^s}) |\nabla \u|_{H^s} |\nabla \dr|_{H^s} ( 1 + |\nabla \dr|_{H^s} )\,,
\end{align*}
then we can infer that
\begin{align}\label{d-R-7}
  \no \tilde{R}_7 = & \l (\tfrac{1}{1+\vr}) \nabla^{k} \big( (\lambda_1 \B + \lambda_2 \A) \dr \big), \nabla^k \dr \r + \sum_{\substack{a+b=k, \\a \geq 1}} \l \nabla^a (\tfrac{1}{1+\vr}) \nabla^b \big((\lambda_1 \B + \lambda_2 \A) \dr\big), \nabla^k \dr \r \\
  \lesssim & (|\lambda_1|+|\lambda_2|) ( |\nabla \dr|_{\dot{H}^s} + \P_s (|\vr|_{\dot{H}^s}) |\nabla \dr|_{H^s} ) |\nabla \u|_{H^s} ( 1 + |\nabla \dr|_{H^s} ) \,.
\end{align}

We then estimate the last term $\tilde{R}_8$ on the right-hand side of \eqref{Dissip-d-0}. Based on the representation of the Lagrangian multiplier $\widetilde{\Gamma}$, $\tilde{R}_{8}$ can be divided into three parts as follows:
\begin{align*}
  & \l \nabla^k ( \tfrac{1}{1 + \vr} \widetilde{\Gamma} \dr ), \nabla^k \dr \r
  = - \l \nabla^k ( |\dot{\dr}|^2 \dr ), \nabla^k \dr \r + \l \nabla^k (\tfrac{1}{1+\vr} |\nabla \dr|^2 \dr), \nabla^k \dr \r \\
  & \qquad \qquad \qquad \qquad \qquad - \lambda_2 \l \nabla^k ( \tfrac{1}{1+\vr} ( \dr^\top \A \dr) \dr ), \nabla^k \dr \r
  \equiv \tilde{R}^1_{8} + \tilde{R}^2_{8} + \tilde{R}^3_{8}\, .
\end{align*}
We now estimate the three terms in the right-hand side of the above identity. For the term $\tilde{R}^1_{8}$, we have
\begin{align} \label{d-R-8-1}
  \no \tilde{R}^1_{8} = & -\l \nabla^k (|\dot{\dr}|^2) \dr, \nabla^k \dr \r - \l |\dot{\dr}|^2 \nabla^k \dr, \nabla^k \dr \r  - \sum_{\substack{a+b=k, \\ 1 \leq b \leq k - 1}} \sum_{a_1 + a_2 =a} \l \nabla^{a_1} \dot{\dr} \nabla^{a_2} \dot{\dr} \nabla^b \dr, \nabla^k \dr \r \\
  \no \lesssim & |\dot{\dr}|_{L^\infty} |\nabla^k \dot{\dr}|_{L^2} |\nabla^k \dr|_{L^2} + \sum_{\substack{a+b=k, \\ a, b \geq 1}} |\nabla^a \dot{\dr}|_{L^4} |\nabla^b \dot{\dr}|_{L^4} |\nabla^k \dr|_{L^2} + |\dot{\dr}|_{L^\infty}^2 |\nabla^k \dr|_{L^2}^2 \\
  \no & + \sum_{\substack{a+b=k, \\ 1 \leq b \leq k - 1}} \sum_{a_1 + a_2 =a} |\nabla^{a_1} \dot{\dr}|_{L^4} |\nabla^{a_2} \dot{\dr}|_{L^4} |\nabla^b \dr|_{L^4} |\nabla^k \dr|_{L^4} \\
  \lesssim & |\dot{\dr}|_{H^s}^2 |\nabla \dr|_{H^s} (1+|\nabla \dr|_{H^s}) \, .
\end{align}
The term $\tilde{R}^2_{8}$ can be divided into two parts as follows:
\begin{align*}
  \tilde{R}^2_{8} = \l (\tfrac{1}{1+\vr}) \nabla^k (|\nabla \dr|^2 \dr), \nabla^k \dr \r + \sum_{\substack{a+b=k, \\ a \geq 1}} \l \nabla^a (\tfrac{1}{1+\vr}) \nabla^b (|\nabla \dr|^2 \dr), (1+\vr) \nabla^k \dr \r \, .
\end{align*}
The first part of $\tilde{R}^2_{8}$ can be controlled as follows:
\begin{align*}
  & \l (\tfrac{1}{1+\vr}) \nabla^k (|\nabla \dr|^2 \dr), \nabla^k \dr \r \\
  \leq & \l |\nabla \dr|^2, |\nabla^k \dr|^2 \r + \l |\nabla^k(|\nabla \dr|^2)|, |\nabla^k \dr| \r + \sum_{\substack{a+b=k, \\ 1 \leq b \leq k-1}} \sum_{a_1 + a_2 =a} \l |\nabla^{a_1} \dr| |\nabla^{a_2} \dr| |\nabla^b \dr|, |\nabla^k \dr| \r \\
  \leq & |\nabla \dr|_{L^4}^2 |\nabla^k \dr|_{L^4}^2 + |\nabla \dr|_{L^4} |\nabla^k \dr|_{L^4} |\nabla^{k+1} \dr|_{L^2} + \sum_{\substack{a+b =k, \\ a,b \geq 1}} |\nabla^{a+1} \dr|_{L^4} |\nabla^{b+1} \dr|_{L^4} |\nabla^k \dr|_{L^2} \\
  & + \sum_{\substack{a+b=k, \\ 1 \leq b \leq k-1}} \sum_{\substack{a_1+a_2=a,\\a_1, a_2 \geq 1}} |\nabla^{a_1+1} \dr|_{L^2} |\nabla^{a_2+1} \dr|_{L^4} |\nabla^b \dr|_{L^\infty} |\nabla^k \dr|_{L^4} \\
  & + \sum_{\substack{a+b=k, \\ a, b \geq 1}} |\nabla \dr|_{L^\infty} |\nabla^{a+1} \dr|_{L^2} |\nabla^b \dr|_{L^4} |\nabla^k \dr|_{L^4} \\
  \lesssim & |\nabla \dr|_{\dot{H}^s}^2 |\nabla \dr|_{H^s} ( |\nabla \dr|_{H^s} + 1 )\, ,
\end{align*}
where we utilize H\"older inequality, Sobolev embedding and the following estimate
\begin{align*}
  |\nabla^a \dr|_{L^4} |\nabla^b \dr|_{L^4} \lesssim |\nabla^a \dr|_{L^2}^{1-\frac{N}{4}} |\nabla^{a+1} \dr|_{L^2}^{\frac{N}{4}} |\nabla^b \dr|_{L^2}^{1-\frac{N}{4}} |\nabla^{b+1} \dr|_{L^2}^{\frac{N}{4}} \lesssim |\nabla \dr|_{\dot{H}^s} |\nabla \dr|_{H^s}
\end{align*}
for any integer $a, b \ (1 \leq a, b \leq k)$, which implied by the interpolation inequality $|f|_{L^4(\R^N)} \leq |f|_{L^2(\R^N)}^{1-\frac{N}{4}} |\nabla f|_{L^2(\R^N)}^{\frac{N}{4}}$ for $N=2,3$. For the estimate of the second part of $\tilde{R}_{8}^2$, we have
\begin{align*}
  & \sum_{\substack{a+b=k \\ a \geq 1}} \l \nabla^a (\tfrac{1}{1+\vr}) \nabla^b (|\nabla \dr|^2 \dr), \nabla^k \dr \r \\
  \leq & \l |\nabla^k (\tfrac{1}{1+\vr})| |\nabla \dr|^2, |\nabla^k \dr| \r + \sum_{\substack{a+b=k,\\ 1 \leq a \leq k-1}} \sum_{b_1+b_1=b} \l |\nabla^a (\tfrac{1}{1+\vr})| |\nabla^{b_1} \dr| |\nabla^{b_2} \dr|, |\nabla^k \dr| \r \\
  & + \sum_{\substack{a+b+c=k,\\ a, c \geq 1}} \sum_{b_1+b_1=b} \l |\nabla^a (\tfrac{1}{1+\vr})| |\nabla^{b_1} \dr| |\nabla^{b_2} \dr| |\nabla^c \dr|, |\nabla^k \dr| \r \\
  \lesssim & \!|\nabla^k (\tfrac{1}{1+\vr})|_{L^2} \!|\nabla \dr|_{L^\infty} \!|\nabla \dr|_{L^4} \!|\nabla^k \dr|_{L^4} \!+\! \sum_{\substack{a+b=k,\\ 1 \leq a \leq k-1}} \!\sum_{b_1+b_1=b}\! |\nabla^a (\tfrac{1}{1+\vr})|_{L^4} |\nabla^{b_1} \dr|_{L^4} |\nabla^{b_2} \dr|_{L^4} |\nabla^k \dr|_{L^4} \\
  & + \sum_{\substack{a+b+c=k,\\ a, c \geq 1}} \sum_{b_1+b_1=b} |\nabla^a (\tfrac{1}{1+\vr})|_{L^4} |\nabla^{b_1} \dr|_{L^4} |\nabla^{b_2} \dr|_{L^\infty} |\nabla^c \dr|_{L^4} |\nabla^k \dr|_{L^4} \\
  \lesssim & \P_s (|\vr|_{\dot{H}^s}) |\nabla \dr|_{\dot{H}^s} |\nabla \dr|_{H^s} ( 1 + |\nabla \dr|_{H^s} ) \, .
\end{align*}
With the the above two estimates, we infer that
\begin{align} \label{d-R-8-2}
  \tilde{R}_{8}^2 \lesssim ( \P_s (|\vr|_{\dot{H}^s}) + |\nabla \dr|_{\dot{H}^s} ) |\nabla \dr|_{\dot{H}^s} |\nabla \dr|_{H^s} ( 1 + |\nabla \dr|_{H^s} ) \, .
\end{align}
For the estimate of $\tilde{R}_{8}^3$, we know that
\begin{align*}
  \tilde{R}_{8}^3 = - \lambda_2 \l (\tfrac{1}{1+\vr}) \nabla^k \big( (\dr^\top \A \dr \big) \dr), \nabla^k \dr \r - \lambda_2 \sum_{\substack{a+b=k,\\ a \geq 1}} \l \nabla^a (\tfrac{1}{1+\vr}) \nabla^b \big( (\dr^\top \A \dr) \dr \big), \nabla^k \dr \r \, .
\end{align*}
Then by the following two estimates
\begin{align*}
 \l (\tfrac{1}{1+\vr}) \nabla^k \big( (\dr^\top \A \dr) \dr \big), \nabla^k \dr \r
  = & \l \nabla^{k-1} \big( (\dr^\top \A \dr \big) \dr), (\tfrac{1}{1+\vr}) \nabla^{k+1} \dr + \nabla (\tfrac{1}{1+\vr}) \nabla^k \dr \r \\
  \lesssim & |\nabla \u|_{H^s} ( |\nabla \dr|_{\dot{H}^s} + |\vr|_{\dot{H}^s} |\nabla \dr|_{H^s} ) \sum_{l=0}^{3} |\nabla \dr|_{H^s}^l
\end{align*}
and
\begin{align*}
   \sum_{\substack{a+b=k\\ a \geq 1}} \l \nabla^a (\tfrac{1}{1+\vr}) \nabla^b( (\dr^\top \A \dr) \dr), \nabla^k \dr \r
  \lesssim  \P_s (|\vr|_{\dot{H}^s}) |\nabla \u|_{H^s} \sum_{l=1}^{4} |\nabla \dr|_{H^s}^l \, ,
\end{align*}
we have
\begin{align}\label{d-R-8-3}
  \tilde{R}_{8}^3 \lesssim |\lambda_2| |\nabla \u|_{H^s} |\nabla \dr|_{\dot{H}^s} \sum_{l=0}^{3} |\nabla \dr|_{H^s}^l + |\lambda_2| \P_s (|\vr|_{\dot{H}^s}) |\nabla \u|_{H^s} \sum_{l=1}^{4} |\nabla \dr|_{H^s}^l \, .
\end{align}
Therefore, combining with the estimate \eqref{d-R-8-1}, \eqref{d-R-8-2} and \eqref{d-R-8-3} yields to
\begin{align}\label{d-R-8}
  \no \tilde{R}_{8} \lesssim & |\dot{\dr}|_{H^s}^2 |\nabla \dr|_{H^s} (1+|\nabla \dr|_{H^s}) + ( \P_s (|\vr|_{\dot{H}^s}) + |\nabla \dr|_{\dot{H}^s} ) |\nabla \dr|_{\dot{H}^s} |\nabla \dr|_{H^s} ( 1 + |\nabla \dr|_{H^s} ) \\
  & + |\lambda_2| |\nabla \u|_{H^s} |\nabla \dr|_{\dot{H}^s} \sum_{l=0}^{3} |\nabla \dr|_{H^s}^l + |\lambda_2| \P_s (|\vr|_{\dot{H}^s}) |\nabla \u|_{H^s} \sum_{l=1}^{4} |\nabla \dr|_{H^s}^l \, .
\end{align}

Noticing that
\begin{align*}
  C ( |\lambda_1| \!+\! |\lambda_2| ) |\nabla \u|_{H^s} |\nabla \dr|_{\dot{H}^s}
  \leq \tfrac{1}{8} \sum_{k=1}^{s} |\tfrac{1}{\sqrt{1+\vr}} \nabla^{k+1} \dr|_{L^2}^2 + 2 C^2 ( |\lambda_1| + |\lambda_2| )^2 |1+\vr|_{L^\infty} |\nabla \u|_{H^s}^2 \, ,
\end{align*}
and by substituting the estimates \eqref{d-R-1}-\eqref{d-R-7} and \eqref{d-R-8} into \eqref{Dissip-d-0}, then summing up with $1 \leq k \leq s$, we have
\begin{align}\label{Dissip-d}
  \no \tfrac{1}{2} & \tfrac{\d}{\d t} ( |\dot{\dr} + \dr|_{\dot{H}^s}^2 - |\dot{\dr}|_{\dot{H}^s}^2 - |\dr|_{\dot{H}^s}^2 ) + \tfrac{3}{4} \sum_{k=1}^{s} |\tfrac{1}{\sqrt{1+\vr}} \nabla^{k+1} \dr|_{L^2}^2 - |\dot{\dr}|_{H^s}^2 - 2 |\lambda_1|^2 |\tfrac{1}{1+\vr}|_{L^\infty} |\dot{\dr}|_{H^s}^2 \\
  \no & - 2 C^2 ( |\lambda_1| + |\lambda_2| )^2 |1+\vr|_{L^\infty} |\nabla \u|_{H^s}^2 \\
  \no \lesssim & ( |\u|_{H^s} |\nabla \dr|_{\dot{H}^s} + |\nabla \u|_{H^s} |\nabla \dr|_{H^s} ) |\dot{\dr}|_{H^s} + (1+|\lambda_1|) \P_s(|\vr|_{\dot{H}^s}) ( |\nabla \dr|_{\dot{H}^s} + |\dot{\dr}|_{H^s} ) |\nabla \dr|_{H^s} \\
  \no & + ( \P_s (|\vr|_{\dot{H}^s}) + |\nabla \dr|_{\dot{H}^s} ) |\nabla \dr|_{\dot{H}^s} |\nabla \dr|_{H^s} ( 1 + |\nabla \dr|_{H^s} ) + |\dot{\dr}|_{H^s}^2 |\nabla \dr|_{H^s} (1+|\nabla \dr|_{H^s}) \\
  \no & + (|\lambda_1| + |\lambda_2|) |\nabla \u|_{H^s} |\nabla \dr|_{\dot{H}^s} \sum_{l=1}^{3} |\nabla \dr|_{H^s}^l + (|\lambda_1| + |\lambda_2|) \P_s (|\vr|_{\dot{H}^s}) |\nabla \u|_{H^s} \sum_{l=1}^{4} |\nabla \dr|_{H^s}^l \\
  \lesssim & C_3 \mathcal{D}_{\eta} (t) \sum_{k=1}^{s+3} \mathcal{E}_{\eta}^{\tfrac{k}{2}} (t)
\end{align}

If we denote
\begin{align*}
  C(\mu, \xi) := & C (\mu^2 + \xi^2) |\tfrac{1+\vr}{p'(1+\vr)}|_{L^\infty} \,, \\
  C(\lambda_1) := & 1 + 2 |\lambda_1|^2 |\tfrac{1}{1+\vr}|_{L^\infty} \,,
\end{align*}
and noticing that
\begin{align*}
  |\nabla^k \dot{\dr}|_{L^2}^2 \leq 2 |\nabla^k \dot{\dr} + (\nabla^k \B) \dr + \tfrac{\lambda_2}{\lambda_1} (\nabla^k \A) \dr|_{L^2}^2 + 2( 1 - \tfrac{|\lambda_2|}{\lambda_1} )^2 |\nabla^{k+1} \u|_{L^2}^2 \,,
\end{align*}
then we have
\begin{align*}
  C(\mu,\xi) (|\nabla \u|_{H^s}^2 + |\dot{\dr}|_{H^s}^2)
  \leq & C(\mu,\xi) \sum_{k=0}^{s} |\nabla^k \dot{\dr} + (\nabla^k \B) \dr + \tfrac{\lambda_2}{\lambda_1} (\nabla^k \A) \dr|_{L^2}^2\\
  & + C(\mu,\xi) \big( 1 + 2 (1 - \tfrac{|\lambda_2|}{\lambda_1})^2 \big) |\nabla \u|_{H^s}^2\,.
\end{align*}
Choosing
\begin{align*}
  \eta_0 = \tfrac{1}{2} \min \Big\{ & \frac{1}{ |\frac{1+\vr}{p'(1+\vr)}|_{L^\infty} }, \frac{\mu_4}{4 C_1(\mu,\xi)}, \frac{\mu_4}{4 C_2(\mu,\xi)}, \frac{-\lambda_1}{4 C(\mu,\xi)}, \frac{-\lambda_1}{4 C(\lambda_1)}, \frac{\frac{1}{2} \mu + \xi}{2 |1+\vr|_{L^\infty}}, 1 \Big\}
\end{align*}
%\begin{align*}
%  \eta_0 = \tfrac{1}{2} \min \Big\{ & \frac{1}{ |\frac{1+\vr}{2 p'(1+\vr)}|_{L^\infty} }, \frac{1}{ |\frac{1}{1+\vr}|_{L^\infty} }, \frac{1}{ |1+\vr|^2_{L^\infty} }, \frac{-\lambda_1}{ 4|1+\vr|_{L^\infty} }, \\
%  & \frac{-\lambda_1}{ 4 C(\mu,\xi) }, \frac{\mu_4/4}{ C(\lambda_1,\lambda_2) },\frac{\mu_4/4}{C(\mu, \xi) (2-\frac{|\lambda_2|}{\lambda_1})}, \frac{\mu_4/2 +\xi}{|1+\vr|_{L^\infty}}, 1 \Big\}
%\end{align*}
and multiplying \eqref{Dissip-rho} and \eqref{Dissip-d} by $\eta_1, \eta_2 \in (0, \eta_0]$ respectively, and then adding them to the inequality \eqref{Expan-H-s}, we have
\begin{align}\label{Expan-Ener-Est-1}
  \no & \tfrac{1}{2} \tfrac{\d}{\d t} \Big\{ \int_{\R^N} \tfrac{p'(1+\vr)}{1+\vr} |\vr|^2 \d x + \sum_{k=1}^{s} \int_{\R^N} ( \tfrac{p'(1+\vr)}{1+\vr} - \eta_1 ) |\nabla^k \vr|^2 \d x \\
  \no & + \sum_{k=0}^{s-1} \int_{\R^N} ((1\!+\!\vr) - \eta_1) |\nabla^k \u|^2 + ((1\!+\!\vr) - \eta_2) |\nabla^{k+1} \dot{\dr}|^2 + (1-\eta_2) |\nabla^{k+1} \dr|^2 \d x \\
  \no & + \eta_1 |\u + \nabla \vr|_{H^{s-1}}^2 + \eta_2 |\dot{\dr} + \dr|_{\dot{H}^s}^2 + |\sqrt{1\!+\!\vr} \nabla^s \u|_{L^2}^2 + |\sqrt{1\!+\!\vr} \dot{\dr}|_{L^2}^2 + |\nabla^{s+1} \dr|_{L^2}^2 \Big\} \\
  \no & + \tfrac{3}{4} \eta_1 \sum_{k=1}^{s} \int_{\R^N} \tfrac{p'(1+\vr)}{1+\vr} |\nabla^k \vr|^2 \d x \!+\! \tfrac{3}{4} \eta_2 |\nabla \dr|_{\dot{H}^s_{\frac{1}{1+\vr}}}^2 + ( \tfrac{1}{2} \mu_4 \!+\! \xi \!-\! |1\!+\!\vr|_{L^\infty} \eta_1) |\div \u|_{H^s}^2 \\
  \no & + ( \tfrac{1}{2} \mu_4 - C_1(\mu, \xi) \eta_1 - C_2(\mu, \xi) \eta_2 ) |\nabla \u|_{H^s}^2 + (\mu_5+\mu_6+\tfrac{\lambda_2^2}{\lambda_1}) \sum_{k=0}^{s} |(\nabla^k \A) \dr|_{L^2}^2\\
  \no & \!+\!\mu_1 \!\sum_{k=0}^{s} \!|\dr^\top \!(\nabla^k \!\A) \dr|_{L^2}^2 \!+\! ( -\!\lambda_1 \!-\! 2 \eta_1 C(\mu,\xi) \!-\! 2 \eta_2 C (\lambda_1) ) \!\sum_{k=0}^{s}\! |\nabla^k \dot{\dr}\! +\! (\nabla^k \B) \dr \!+ \!\tfrac{\lambda_2}{\lambda_1} (\nabla^k \A) \dr|_{L^2}^2 \\
  & \lesssim C \mathcal{D}_{\eta} (t) \sum_{k=1}^{s+4} \mathcal{E}_{\eta}^{\frac{k}{2}} (t) \,,
\end{align}
where
\begin{align*}
  C_1(\mu, \xi) :=& C(\mu, \xi) + 2 (1-\tfrac{|\lambda_2|}{\lambda_1})^2 C(\mu, \xi),\\
  C_2(\mu, \xi) :=& 2 C^2 ( |\lambda_1| + |\lambda_2| )^2 |1+\vr|_{L^\infty} + 2 (1-\tfrac{|\lambda_2|}{\lambda_1})^2 C(\lambda_1) \,.
\end{align*}

With the estimate \eqref{Expan-Ener-Est-1} and the definition $\mathcal{E}_{\eta} (t)$ and $\mathcal{D}_{\eta} (t)$ in hand, we then derive that
\begin{align}\label{Global-Energy-Bound}
  \tfrac{1}{2} \tfrac{\d}{\d t} \mathcal{E}_{\eta} (t) + \mathcal{D}_{\eta} (t) \leq C \mathcal{D}_{\eta} (t) \sum_{k=1}^{s+4} \mathcal{E}_{\eta}^{\frac{k}{2}} (t) \,,
\end{align}
where the constant $C$ depends only on the Leslie coefficients. So we complete the proof of Lemma \ref{lemma-6-1}.
\end{proof}

{\textbf{Proof of Theorem \ref{theorem-2}: Global well-posedness.}}
As the end step, we use the continuum arguments to prove the global-in-time solutions of the compressible Ericksen-Leslie liquid crystal system. It is obvious that, there exists constants $C^{\#}$ and $C_\#$,
\begin{align*}
  C_\# =& \min \Big\{ \inf |\tfrac{p'(1+\vr)}{1+\vr}| - \eta_0, 1-\eta_0, 1 \Big\} \,, \\
  C^\# =& |\tfrac{p'(1+\vr)}{1+\vr}|_{L^\infty} + 4|1+\vr|_{L^\infty} + 4 \,,
\end{align*}
such that
\begin{align*}
  C_{\#} \tilde{E}(t) \leq \mathcal{E}_{\eta} (t) \leq C^{\#} \tilde{E}(t)
\end{align*}
and
\begin{align*}
  \mathcal{D}_{\eta} (t) \geq \tfrac{1}{4} \mu_4 |\nabla \u|_{H^s}^2 + (\tfrac{1}{4} \mu_4 + \tfrac{1}{2} \xi) |\div \u|_{H^s}^2 \,,
\end{align*}
where $\tilde{E}(t) = |\u|_{H^s}^2 + |\dot{\dr}|_{H^s}^2 + |\nabla \dr|_{H^s}^2 + |\vr|_{H^s}^2$. As a consequent, we have
\begin{align*}
  C_\# \tilde{E}^{in} \leq \mathcal{E}_{\eta} (0) \leq C^\# \tilde{E}^{in} \,.
\end{align*}
We make a definition as
\begin{align*}
  T^* = \sup \big\{ \tau>0; \sup_{t \in [0,\tau]} C \sum_{k=1}^{s+4} \mathcal{E}_{\eta}^{\frac{k}{2}} (t) \leq \tfrac{1}{2} \big\} \geq 0 \,,
\end{align*}
where the constant $C>0$ is mentioned as in Lemma 8.1. We then choose the sufficient small positive number $\eps_0 = \tfrac{1}{C^\#} \min \{1, \tfrac{1}{4 (s+4) C} \} >0 $. If the initial energy $\tilde{E}^{in} \leq \eps_0$, we can deduce that
\begin{align*}
  C \sum_{k=1}^{s+4} \mathcal{E}_{\eta}^{\frac{k}{2}} (0) \leq \tfrac{1}{4} < \tfrac{1}{2} \,.
\end{align*}
Then by taking advantage of the continuity of the energy functional $\mathcal{E}_{\eta} (t)$, one derives that $T^*>0$. Thus
\begin{align*}
  \tfrac{1}{2} \tfrac{\d}{\d t} \mathcal{E}_{\eta} (t) + \big[ 1 - C \sum_{k=1}^{s+4} \mathcal{E}_{\eta}^{\frac{k}{2}} (t) \big] \mathcal{D}_{\eta} (t) \leq 0 \,,
\end{align*}
holds for all $t \in [0, T^*]$, which also implies that we have $\mathcal{E}_{\eta} (t) \leq \mathcal{E}_{\eta} (0) \leq C^\# \tilde{E}^{in} $ for all $t \in [0, T^*]$. Then we can derive that
\begin{align*}
  \sup_{t \in [0, T^*]} \Big\{ C \sum_{k=1}^{s+4} \mathcal{E}_{\eta}^{\frac{k}{2}} (t) \Big\} \leq \tfrac{1}{4} \,.
\end{align*}
Based on the the above analysis, we claim that $T^* = + \infty$. Otherwise, the continuity of the energy $\mathcal{E}_{\eta} (t)$ implies that there exists a sufficiently small positive $\eps>0$ such that
\begin{align*}
  \sup_{t \in [0, T^*+\eps]} \Big\{ C \sum_{k=1}^{s+4} \mathcal{E}_{\eta}^{\frac{k}{2}} (t) \Big\} \leq \tfrac{3}{8} < \tfrac{1}{2} \,.
\end{align*}
which contradicts to the definition of $T^*$. As a consequence, there is a constant $C_1$ depends only on the Leslie coefficients, such that the following inequality hold
\begin{align*}
  & \sup_{t \geq 0} \big( |\u|_{H^s}^2 + |\vr|_{H^s}^2 + |\dot{\dr}|_{H^s}^2 + |\nabla \dr|_{H^s}^2 \big) (t) \\
   & + \tfrac{1}{2} \mu_4 \int_{0}^{\infty} |\nabla \u|_{H^s}^2 \d t + (\tfrac{1}{2} \mu_4 + \xi) \int_{0}^{\infty} |\div \u|_{H^s}^2 \d t\leq C_1 \tilde{E}^{in} \,.
\end{align*}
Thus we complete the proof of Theorem \ref{theorem-2}.
\appendix
%\section*{Acknowledgement}
\bigskip

\end{document}